\documentclass{amsart}
\usepackage{geometry}                
\geometry{letterpaper}                   
\usepackage{graphicx}
\usepackage{amssymb}
\usepackage{epstopdf,amsmath}
\newtheorem{prop}{Proposition}[section]
\newtheorem{lemma}[prop]{Lemma}
\newtheorem{cor}[prop]{Corollary}

\newtheorem{theorem}[prop]{Theorem}

\newtheorem{cond}[prop]{Conditions}

\DeclareMathOperator{\Vol}{Vol}
\DeclareMathOperator{\Dim}{Dim}
\DeclareMathOperator{\Poly}{Poly}
\DeclareMathOperator{\Deg}{Deg}

\DeclareMathOperator{\Angle}{Angle}

\DeclareMathOperator{\Area}{Area}
\DeclareMathOperator{\Dist}{Dist}

\DeclareMathOperator{\Br}{Br}
\DeclareMathOperator{\supp}{supp}
\DeclareMathOperator{\Bil}{Bil}
\DeclareMathOperator{\dvol}{dvol}
\DeclareMathOperator{\domega}{d \omega_1 d \omega_2}
\DeclareMathOperator{\neglig}{negligible}
\DeclareMathOperator{\Tan}{Tan}

\newtheorem{definition}[prop]{Definition}

\newcommand{\CC}{\mathbb{C}}
\newcommand{\eps}{\epsilon}

\newcommand{\RR}{\mathbb{R}}
\newcommand{\TT}{\mathbb{T}}
\newcommand{\BrEf}{\Br_\alpha Ef}
\newcommand{\BilEft}{\Bil (E{f}_{j, tang}) }

\title{A restriction estimate using polynomial partitioning}

\author{Larry Guth}

\begin{document}

\begin{abstract} If $S$ is a smooth compact surface in $\mathbb{R}^3$ with strictly positive second fundamental form, and $E_S$ is the corresponding extension operator, then we prove that for all $p > 3.25$, $\| E_S f\|_{L^p(\RR^3)} \le C(p,S) \| f \|_{L^\infty(S)}$.  The proof uses polynomial partitioning arguments from incidence geometry.

\end{abstract}

\maketitle

In this paper we give a small improvement on the 3-dimensional restriction problem using polynomial partitioning.
Suppose that $S \subset \RR^3$ is a smooth surface.  We write $E_S$ for the extension operator.  If $f$ is a function $S \rightarrow \CC$, then

$$ E_S f(x) := \int_S e^{i \omega x} f(\omega) \dvol_S(\omega). $$

\begin{theorem} \label{mainintro} If $S \subset \RR^3$ is a compact $C^\infty$ surface (maybe with boundary) with strictly positive second fundamental form, then for all $p > 3.25$, 

$$ \| E_S f \|_{L^p(\RR^3)} \le C(p, S) \| f \|_{L^\infty(S)} . $$

\end{theorem}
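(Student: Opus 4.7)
The plan is to run an induction on the radius $R$, proving $\|E_S f\|_{L^p(B_R)} \le C_\eps R^\eps \|f\|_{L^\infty(S)}$ with a constant that closes on itself, and then take $R \to \infty$. The starting point is the standard wave packet decomposition $E_S f = \sum_\theta T_\theta$ on $B_R$, where each $T_\theta$ is essentially supported on a tube of dimensions $R^{1/2} \times R^{1/2} \times R$ pointed along the normal to $S$ at an $R^{-1/2}$-cap. I would then insert a Bourgain--Guth broad/narrow split at some scale $K^{-1}$: the narrow contribution (mass concentrated in one $K^{-1}$-cap) is handled by parabolic rescaling plus iteration, so the real work is to bound the broad part, where several caps with transverse normals contribute at each point.

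The heart of the argument is a polynomial partitioning of $\{|E_Sf| > \lambda\}\cap B_R$ using a degree-$D$ polynomial $P$ furnished by Guth's iterated polynomial ham sandwich, giving roughly $D^3$ cells $O_i$ on which $\int_{O_i}|E_Sf|^p$ is equidistributed, plus a wall $W = N_{R^{1/2+\delta}}(Z(P))$. This produces the split
\begin{equation*}
\|E_Sf\|_{L^p(B_R)}^p \lesssim \sum_i \|E_Sf\|_{L^p(O_i)}^p + \|E_Sf\|_{L^p(W)}^p.
\end{equation*}
In the cellular case (first term dominates) Bezout forces each wave packet tube to enter at most $D+1$ cells, so the average cell sees only a $D^{-2}$ fraction of the packets; applying the inductive hypothesis on each cell with a rescaled input gives a genuine gain as long as $p$ is large enough to outrun the $D^3$ loss from the union bound.

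The wall case is the algebraic case and is where I expect the main difficulty. I would further decompose the packets meeting $W$ into \emph{tangential} packets (those whose tube stays inside $W$ for distance $\gg R^{1/2+\delta}$) and \emph{transverse} packets. Each transverse packet spends only $\sim R^{1/2+\delta}$ of its length in $W$ per crossing of $Z(P)$, and crosses at most $D$ times, giving a tidy bound via a bilinear restriction input (Tao's $p>10/3$ estimate applied at a smaller scale $R^{1/2+\delta}$, then summed). The tangential packets are the genuinely delicate ones: they cluster around the two-dimensional variety $Z(P)$, and controlling $\|\sum_\theta T_\theta\|_{L^p}$ for packets aligned with a $2$-surface in $\RR^3$ requires an $L^p$ estimate tailored to this algebraic configuration, beating $10/3$ by exploiting that the tangency constraint forces the normals of the contributing caps to lie in a one-parameter family at each scale.

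The final step is to balance the three estimates. I would take $D$ to be a large constant (or a slowly growing function of $R$) and choose the wave-packet tolerance $\delta$ so that the cellular gain, the transverse gain, and the tangential estimate all give back the same power of $R$. The exponent $p > 3.25$ should then emerge from the arithmetic of these balance conditions; pushing below this threshold is exactly what the tangential estimate prevents, and improving the tangential input is where any future progress would have to come from.
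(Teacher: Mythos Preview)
Your outline tracks the paper's architecture (broad/narrow, polynomial partitioning, cellular versus wall, transverse versus tangential), but two steps are broken as written.

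First, the transverse case. Your claim that a transverse packet spends only $\sim R^{1/2+\delta}$ of its length in $W$ per crossing is false: $W$ has thickness $R^{1/2+\delta}$, but a tube meeting $Z(P)$ at angle $\alpha$ stays inside $W$ for length $\sim R^{1/2+\delta}/\alpha$. With the natural threshold $\alpha \gtrsim R^{-1/2+2\delta}$ (anything coarser wrecks the tangential direction count you want later), a transverse tube can remain in $W$ for length up to $\sim R^{1-\delta}$, so localizing to scale $R^{1/2+\delta}$ and invoking Tao's $p>10/3$ bilinear theorem does not work. The paper instead covers $B_R$ by balls $B_j$ of radius $R^{1-\delta}$ and closes the transverse contribution by \emph{induction on the radius} on each $B_j$; the essential geometric input is a nontrivial lemma showing that each tube lies in $\TT_{j,\mathrm{trans}}$ for at most $\Poly(D)$ values of $j$ --- the line fact ``at most $D$ crossings'' does not transfer to thick tubes for free. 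No external bilinear restriction theorem is used anywhere.

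Second, the inductive quantity. You cannot run the cellular step with $\|f\|_{L^\infty}$ on the right: passing from $f$ to $f_i = \sum_{T \in \TT_i} f_T$ gives no control on $\|f_i\|_\infty$, which may even increase. The paper repairs this by working under the normalization $\max_\theta \oint_\theta |f|^2 \le 1$ and proving $\int_{B_R} (\Br Ef)^{3.25} \le C_\eps R^\eps \bigl(\int |f|^2\bigr)^{3/2+\eps}$. The cellular gain is then an $L^2$ gain: since each tube enters at most $D+1$ cells, $\sum_i \int |f_i|^2 \lesssim D \int |f|^2$, so some cell has $\int |f_i|^2 \lesssim D^{-2}\int |f|^2$, and the exponent $3/2+\eps$ is precisely what beats the $D^3$ cell count. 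The number $3.25$ is forced not by the cellular arithmetic but by the tangential estimate, which the paper proves via a C\'ordoba-type bilinear $L^4$ bound on each $R^{1/2}$-cube (tangential tubes through a cube are nearly coplanar), interpolated against the trivial $L^2$ bound, together with the $\sim R^{1/2}$ direction count you allude to.
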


Stein's restriction conjecture \cite{Ste} says that such a bound should hold for all $p > 3$.  An important milestone in the theory was the work of Wolff and Tao (\cite{W1} and \cite{T2}), which proved the estimate above for $p > 10/3$.  This estimate was slightly improved by Bourgain and the author in \cite{BG}, establishing the result for $p > (56/17) = 3.29...$ (see Section 4.8 of \cite{BG}).  Theorem \ref{mainintro} is a further small improvement.  

The main new idea in the current paper is to apply polynomial partitioning to the restriction problem.  
In \cite{D}, Dvir proved the finite field analogue of the Kakeya conjecture by an elegant argument using high degree polynomials.  It remains unclear how much this polynomial method may help to understand the Kakeya conjecture or the restriction conjecture.  I believe that this paper is the first time that the polynomial method has been applied to estimate oscillatory integrals.  Partitioning is an important technique in incidence geometry, introduced by Clarkson, Edelsbrunner, Guibas, Sharir, and Welzl  \cite{CEGSW}.  Polynomial partitioning combines ideas from the partitioning arguments of \cite{CEGSW} and the polynomial arguments of \cite{D}.   It was introduced by Katz and the author in \cite{GK} in our work on the Erd{\H o}s distinct distance problem in incidence geometry.

In the introduction, we will explain how polynomial partioning works in incidence geometry and how to adapt the method to the restriction problem, and we will give a detailed outline of the proof of Theorem \ref{mainintro}.  Before that, we recall background material about incidence geometry and about restriction, and we explain how the two topics are related to each other.  

\subsection{Background on incidence geometry}

Incidence geometry studies the possible intersection patterns of simple geometric objects, such as lines or circles.  Suppose that $\frak L$ is a set of lines in $\RR^n$.  We let $P_r(\frak L)$ be the set of $r$-rich points of $\frak L$: the set of points that lie in at least $r$ lines of $\frak L$.  The most fundamental questions of incidence geometry asks,  ``For given numbers $L$ and $r$, what is the maximum possible number of $r$-rich points that can be formed by a set of $L$ lines?''   Szemer\'edi and Trotter solved this problem up a constant factor in \cite{SzTr}.  Other problems in incidence geometry involve sets of lines with extra conditions, other types of curves, and so on.

Polynomial partitioning is an important recent technique for attacking this type of problem.   Partitioning is a divide-and-conquer approach.  We pick a (non-zero) polynomial $P$, and consider its zero set $Z(P) \subset \RR^n$.  The complement $\RR^n \setminus Z(P)$ is a union of connected components $O_i$, often called cells.  To estimate the size of $P_r(\frak L)$, we can estimate the number of $r$-rich points in each cell $O_i$ and the number of $r$-rich points on the surface $Z(P)$.  One crucial observation is that a line can cross $Z(P)$ at most $\Deg P$ times, and so it can enter at most $1 + \Deg P$ of the cells.  Depending on the choice of $P$, $\RR^n \setminus Z(P)$ can have as many as $\sim (\Deg P)^n$ cells.  If there are $\sim (\Deg P)^n$ cells, then each line enters only a small fraction of the cells.

For this divide-and-conquer approach to be effective, we would like the points of $P_r(\frak L)$ to be evenly divided among the cells $O_i$.  The following partitioning theorem deals with this issue.  The partitioning theorem is a topological result, closely connected to the ham sandwich theorem proven by Stone and Tukey in \cite{ST}.

\begin{theorem} \label{partcomb} (Theorem 4.1 in \cite{GK}) Suppose that $X \subset \RR^n$ is a finite set.  For any $D \ge 1$, there is a polynomial $P$ of degree at most $D$ so that each component of $\RR^n \setminus Z(P)$ contains at most $C_n D^{-n} |X|$ points of $X$.
\end{theorem}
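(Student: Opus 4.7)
The plan is to iterate the polynomial ham sandwich theorem. Recall the polynomial form: for any $N$ finite sets $X_1, \dots, X_N \subset \RR^n$ with $N \le \binom{n+d}{n} - 1$, there is a nonzero polynomial of degree at most $d$ whose zero set bisects each $X_i$, meaning each of the open sets $\{P > 0\}$ and $\{P < 0\}$ contains at most $|X_i|/2$ of the points of $X_i$. This follows from the Borsuk--Ulam theorem applied to the Veronese embedding $\RR^n \hookrightarrow \RR^{\binom{n+d}{n}-1}$, which reduces the statement to bisecting $N$ finite sets by a hyperplane through the origin. Since $\binom{n+d}{n} \sim d^n/n!$, one can simultaneously bisect $N$ sets by a polynomial of degree at most $C_n N^{1/n}$.

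Next I iterate. Begin with $X$ itself as the unique piece at step $0$. At step $j = 1, 2, \dots, k$, assume that after step $j-1$ the set $X$ has been partitioned into $2^{j-1}$ disjoint subsets, each of size at most $|X|/2^{j-1}$. Apply polynomial ham sandwich to these $2^{j-1}$ subsets to obtain a polynomial $P_j$ of degree at most $C_n \cdot 2^{(j-1)/n}$ that bisects each of them. Subdividing each piece according to the sign of $P_j$ yields $2^j$ subsets, each of size at most $|X|/2^j$.

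After $k$ steps, set $P := P_1 P_2 \cdots P_k$. The degree is controlled by a geometric sum
\[
\deg P \le \sum_{j=1}^{k} C_n \cdot 2^{(j-1)/n} \le C_n' \cdot 2^{k/n}.
\]
Each connected component of $\RR^n \setminus Z(P)$ lies in a single sign pattern $(\operatorname{sgn} P_1, \dots, \operatorname{sgn} P_k)$ (with no $P_j$ vanishing on the component), and therefore contains at most $|X|/2^k$ points of $X$. Choosing $k$ to be the largest integer with $C_n' \cdot 2^{k/n} \le D$ gives $\deg P \le D$ and at most $C_n D^{-n} |X|$ points per cell.

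The substantive ingredient is the polynomial ham sandwich theorem (a Borsuk--Ulam argument via the Veronese map); after that the iteration and degree bookkeeping are routine. The one subtle point worth flagging is that distinct connected components of $\RR^n \setminus Z(P)$ may share the same sign pattern of the $P_j$, but every component is contained in exactly one such pattern, so the bound per pattern transfers to a bound per component, which is what the theorem asks for.
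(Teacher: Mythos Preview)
Your argument is correct and is essentially the same as the paper's: iterate the polynomial ham sandwich theorem (Corollary~\ref{polyham}) to build $P_1,\dots,P_k$, take $P=\prod_j P_j$, and control the degree by the geometric sum $\sum_j C_n 2^{(j-1)/n}\lesssim_n 2^{k/n}$. The paper carries this out verbatim for the $L^1$-function version (Theorem~\ref{polypart}); the discrete statement you were asked to prove is handled identically in \cite{GK}, and your observation that each connected component of $\RR^n\setminus Z(P)$ lies in a single sign pattern is exactly the point needed to transfer the per-pattern bound to a per-component bound.
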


If none of the points of $X$ are in $Z(P)$, then the points have to be quite evenly distributed among the components of $\RR^n \setminus Z(P)$.  We know that there are $\lesssim D^n$ components in total, and each component contains $\lesssim D^{-n} |X|$ points of $X$.  However, it may happen that some or all of the points of $X$ lie in $Z(P)$.  Theorem \ref{partcomb} really gives a kind of dichotomy: either the points cluster on a low degree surface, or else they can be evenly divided by a low degree surface.  

Polynomial partitioning is used in incidence geometry roughly as follows.  If the points of $P_r(\frak L)$ are evenly divided among the cells $O_i$, then we can do a divide-and-conquer argument, estimating the number of $r$-rich points in a typical cell.  For a typical cell $O_i$, the number of lines intersecting $O_i$ is only a small fraction of the $L$ lines.   Then we can estimate the number of $r$-rich points in $O_i$ either directly or by induction.  On the other hand, if the points of $P_r(\frak L)$ cluster on a low-degree surface $Z(P)$, then there is some kind of special structure, and perhaps the original problem reduces to a lower-dimensional problem.

Polynomial partitioning was introduced in \cite{GK}, where it was applied to some problems about lines in $\RR^3$.  In \cite{KMS}, Kaplan, Matousek, and Sharir used polynomial partitioning to give new proofs of some classical results in incidence geometry, including the Szemer\'edi-Trotter theorem.  Polynomial partitioning has been refined and applied to other problems by Solymosi and Tao \cite{SolTao}, Sharir and Solomon \cite{SS}, and others.  

The proof of Theorem \ref{mainintro} uses ideas from these papers, especially the inductive setup introduced in \cite{SolTao}.  In the next subsection, we will give some background on the restriction problem and explain how it connects with incidence geometry. 

\subsection{Background on restriction}

One important example of a positively curved surface $S$ is the truncated paraboloid, defined by $\omega_3 = \omega_1^2 + \omega_2^2, \omega_1^2 + \omega_2^2 \le 1$.  For the rest of the introduction, we focus on this example.  

In \cite{B1}, Bourgain introduced the idea of studying $E_S f$ by breaking it into wave packets.  For a large radius $R$, and for some exponent $p$, we would like to estimate $\int_{B_R} |E_S f|^p$.  We first divide $S$ into caps $\theta$ of radius $\sim R^{-1/2}$.  For each $\theta$, $E_S (f \chi_\theta)$ breaks into pieces supported on tubes.  We let $\TT(\theta)$ be a collection of finitely overlapping tubes covering $B_R$, pointing in the direction of the normal vector to $S$ at $\theta$, with length $\sim R$ and radius roughly $R^{1/2}$.  We can then break $f \chi_{\theta}$ into pieces $f_T$, $T \in \TT(\theta)$ so that $E_S f_T$ is essentially supported on $T$, $f_T$ is essentially supported on $\theta$, and the set of functions $f_T$ are essentially orthogonal.  For each $T \in \TT(\theta)$, $E_S f_T$ on $B_R$ is morally well-approximated by the following model:

$$\textrm{For } x \in B_R,  E_S f_T(x) \textrm{ is approximately } a_T \chi_T e^{i \omega_{\theta} x} ,$$

\noindent where $\omega_{\theta}$ is the center of the cap $\theta$, and $a_T$ is a complex number with $|a_T| \sim R^{-1/2} \| f_T \|_{L^2(\theta)}$.  

Without significant loss of generality, one can imagine that $a_T = 0$ for some tubes $T$ and that $|a_T|$ is constant on all the other tubes.  In this case, $\int_{B_R} |E_S f|^p$ is related to the combinatorics of how the tubes (with $a_T \not= 0$) overlap.  Bourgain \cite{B1} proved combinatorial estimates about overlapping tubes pointing in different directions.  Applying these estimates to the wave packets, he gave new estimates on the restriction problem.  

Wolff (see \cite{W3}) observed that these problems about overlapping tubes have a similar flavor to the problems in incidence geometry we discussed in the last subsection.  He was able to adapt arguments from incidence geometry to prove estimates in analysis.  Using the partitioning argument from \cite{CEGSW}, he proved a Kakeya-type result involving circles \cite{W5} and a local smoothing estimate for the wave equation \cite{W4}.  Following this philosophy, we will adapt the polynomial partitioning approach from incidence geometry to control the wave packets above.

Before turning to polynomial partitioning, we also need to introduce the idea of broad points.
We pick a large constant $K$ and we divide $S$ into $K^2$ caps $\tau$, each of diameter $\sim K^{-1}$, and we write $f_\tau$ for $f \chi_{\tau}$.  For a real number $\alpha \in (0,1)$, we say that $x$ is $\alpha$-broad for $Ef$ if

$$ \max_{\tau} | E f_{\tau} (x)| \le \alpha |Ef(x)|. $$

We define $\Br_\alpha Ef(x)$ to be $|Ef(x)|$ if $x$ is $\alpha$-broad for $Ef$ and zero otherwise.  
From this definition, we see that

\begin{equation} \label{broadnarrow} |E f(x) | \le \max \left( \Br_\alpha Ef (x), \alpha^{-1} \max_\tau |E f_\tau(x)| \right). \end{equation}

The broad contribution is the hardest to estimate, and the second term can be handled by induction as we explain below.  Our strongest result is the following estimate about the broad points:

\begin{theorem} \label{introthm2}  If $S$ is the truncated paraboloid, and $\eps > 0$, then there is a large constant $K = K(\eps)$ so that for any radius $R$

$$ \| \Br_{K^{-\eps}} E_S f \|_{L^{3.25}(B^3_R)} \le C_\eps R^{\eps} \| f \|_{L^2(S)}^{12/13} \| f \|_{L^\infty(S)}^{1/13}. $$

\end{theorem}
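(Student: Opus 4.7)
The plan is a polynomial partitioning argument run as an induction on the radius $R$, producing the cellular/algebraic dichotomy familiar from \cite{GK,KMS,SolTao}. The main steps are: a wave packet decomposition; application of Theorem~\ref{partcomb} to the measure $|\Br_{K^{-\eps}} E_S f|^{3.25}$ on $B_R$; a cellular case handled by the inductive hypothesis in each cell; and an algebraic case in which the broad mass concentrates in a thin neighborhood of $Z(P)$ and must be controlled by a lower-dimensional estimate.

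\textbf{Setup.} By homogeneity I assume $\|f\|_{L^\infty}=1$, set $N:=\|f\|_{L^2}^2$, and aim for $\|\Br_{K^{-\eps}} E_S f\|_{L^{3.25}(B_R)} \le C_\eps R^\eps N^{6/13}$. Decompose $f = \sum_T f_T$ into wave packets, where $T$ ranges over tubes of length $R$ and radius $R^{1/2}$, adapted to the $R^{-1/2}$-caps $\theta \subset S$, as in the introduction. Choose a degree $D = R^\delta$ with $\delta = \delta(\eps) \ll \eps$ and apply Theorem~\ref{partcomb} (after discretizing) to $|\Br_{K^{-\eps}} E_S f|^{3.25}\,dx$, obtaining a polynomial $P$ of degree $\le D$ whose cells $O_i$ each carry $\lesssim D^{-3}$ of the broad $L^{3.25}$-mass. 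Let $W$ be the $R^{1/2}$-neighborhood of $Z(P)$. Either most of the broad mass lies in $\bigcup_i (O_i \setminus W)$ (cellular case), or in $W$ (algebraic case).

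\textbf{Cellular case.} Let $f_i$ be the sum of wave packets whose tube meets $O_i$; on $O_i \setminus W$ one has $E_S f \approx E_S f_i$ up to a rapidly-decaying tail, and the broadness of $E_S f$ is essentially inherited by $E_S f_i$ (perhaps after mildly shrinking the broadness constant). A tube is effectively a thickened line, hence meets $Z(P)$ in $\le D$ points and enters $\le D+1$ cells, so $\sum_i \|f_i\|_{L^2}^2 \lesssim D\cdot N$ while $\|f_i\|_{L^\infty} \le 1$. Applying the induction hypothesis (at the same radius $R$ but with smaller $L^2$ mass) on each $f_i$ and summing over the $\lesssim D^3$ cells yields, via H\"older, a bound of the shape $D^{O(1)}\cdot C_\eps R^{3.25\eps} N^{3}$; the small power of $D = R^\delta$ is absorbed into $R^\eps$ because $\delta \ll \eps$, and the induction closes. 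The precise exponent $3 = 3.25 \cdot (12/13)$ on $N$ is exactly what allows the bookkeeping to balance, given the tube-to-cell multiplicity $D$.

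\textbf{Algebraic case and main obstacle.} Split wave packets into tangent (tube contained in $W$ throughout $B_R$) and transverse (tube crosses $Z(P)$, and so enters $\lesssim D$ cells of its own); the transverse contribution is handled as in the cellular case. The tangent contribution is the heart of the matter: the tubes then lie in an $R^{1/2}$-neighborhood of a two-dimensional algebraic variety of degree $\le D$. Broadness is essential here, because at a $K^{-\eps}$-broad point many angularly-separated wave-packet directions contribute, enabling a bilinear reduction; one then applies a Wolff--Tao-type bilinear restriction estimate for wave packets tangent to an algebraic surface, and iterates the partitioning argument within $W$ (on the two-dimensional variety $Z(P)$, in the spirit of \cite{SolTao}) to localize further. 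Balancing the cellular and algebraic contributions should produce precisely the exponent $p = 3.25$ together with the split $12/13, \, 1/13$ between $L^2$ and $L^\infty$. I expect this tangent case to be the principal obstacle: one needs (i) a Kakeya-type estimate controlling how densely tubes can pile up tangent to a low-degree surface while remaining compatible with broadness, and (ii) a bilinear restriction estimate that survives the tangency hypothesis, and the two must interlock just tightly enough for the target exponents to drop out.
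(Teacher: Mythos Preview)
Your proposal captures the right skeleton---wave packets, polynomial partitioning, a cellular/algebraic dichotomy---and correctly flags the tangent contribution as the crux. But several mechanisms are misidentified, and one essential structural layer is missing.

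First, the cellular induction does not close because $D = R^\delta$ with $\delta \ll \eps$. In the paper's formulation the inductive quantity is $\int_{B_R} (\Br Ef)^{3.25} \le C_\eps R^\eps \bigl(\int |f|^2\bigr)^{(3/2)+\eps}$, and the closure comes from $D^3 \cdot (D^{-2})^{(3/2)+\eps} = D^{-2\eps}$: the extra $+\eps$ in the exponent on the $L^2$ mass is what makes the cellular step self-improving, independently of how $D$ relates to $R$. Your version leaves a residual $D^{O(1)}$ that would \emph{inflate} the $R^\eps$ factor and prevent the induction from closing.

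Second, and more seriously, the algebraic case is not organized as you describe. The transverse contribution is \emph{not} handled like the cellular case. The paper covers $B_R$ by balls $B_j$ of radius $R^{1-\delta}$ and defines tangency/transversality \emph{locally} in each $B_j$; a tube lies in $\TT_{j,\mathrm{trans}}$ for at most $\Poly(D)$ values of $j$ (Lemma~\ref{transbound}), and the transverse contribution is controlled by induction on the \emph{radius} applied inside each $B_j$. Without this induction-on-scales layer there is no mechanism to bound the transverse tubes.

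Third, the tangent case is not handled by iterating partitioning inside $Z(P)$ or by a Wolff--Tao bilinear estimate. The two ingredients are: (i) a geometric lemma (Lemma~\ref{tangbound}) that in each $B_j$ the tangent tubes point in at most $R^{(1/2)+O(\delta)}$ of the $\sim R$ directions $\theta$; and (ii) on each $R^{1/2}$-cube $Q \subset W \cap B_j$ the tangent tubes through $Q$ are nearly coplanar, so C\'ordoba's $L^4$ argument gives square-root cancellation for the bilinear form $|Ef_{\tau_1,j,\mathrm{tang}}|\,|Ef_{\tau_2,j,\mathrm{tang}}|$. Interpolating the resulting $L^4$ bound with the trivial $L^2$ bound, and inserting the direction count from (i), yields the tangent estimate directly at $p = 13/4$---no further induction or partitioning is invoked.
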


We now briefly explain how Theorem \ref{introthm2} implies Theorem \ref{mainintro}.   As an immediate corollary of Theorem \ref{introthm2} we get the estimate:

\begin{equation} \label{broadlinfty} \| \Br_{K^{-\eps}} Ef \|_{L^{3.25}(B^3_R)} \le C_\eps R^{\eps} \| f \|_{L^\infty(S)}. \end{equation}

Following ideas from \cite{BG}, this estimate implies that

\begin{equation} \label{linfty} \| Ef \|_{L^{3.25}(B^3_R)} \le C_\eps R^\eps \| f \|_{L^\infty(S)} \end{equation}

\noindent Here is a quick sketch of the argument.  The idea is to prove Inequality \ref{linfty} by induction on the radius.  By Inequality \ref{broadnarrow}, we see that for any $p$, 

$$ \int_{B_R} |E f|^p \le \int_{B_R} \Br_{K^-\eps} Ef^p + K^{- p \eps} \sum_\tau \int_{B_R} |Ef_\tau|^p. $$

\noindent The broad term on the right-hand side is controlled by Inequality \ref{broadlinfty}.  On the other hand, each integral $\int_{B_R} | E f_\tau |^p$ can be controlled by induction: after a change of variables, it can be controlled using Inequality \ref{linfty} on a smaller ball.   The contributions from the $| E f_\tau |$ terms turn out to be dominated by the contribution from the broad term, and so the induction closes.
This observation is in a similar spirit to the bilinear approach to the restriction problem from \cite{TVV}.  

Finally, by the $\eps$-removal theorem in \cite{T1}, Inequality \ref{linfty} in turn implies Theorem \ref{mainintro} for the paraboloid.  

We also remark that the exponent $3.25$ is the sharp exponent in Theorem \ref{introthm2}, given the right-hand side.  In order to control $L^p$ norms for $p < 3.25$, we would have to weight $\| f\|_\infty$ more and $\| f \|_2$ less.  

\subsection{Examples}

We now give some examples of functions $f$ to illustrate Theorem \ref{introthm2}.  These examples are supposed to give some sense of the theorem, and also to start to illustrate the connection between this theorem and incidence geometry questions.

The first example is a planar example.  In this case, $E_S f$ is essentially supported in a planar slab of dimensions $R^{1/2} \times R \times R$.  There are $\sim R^{1/2}$ caps $\theta \subset S$ for which the normal vector lies within an angle $\sim R^{-1/2}$ of the plane.  For each of these $R^{1/2}$ caps $\theta$, there are $\sim R^{1/2}$ tubes $T \in \TT(\theta)$ that lie in the planar slab.  We pick a number $B$ between 1 and $R^{1/2}$, and for each of the $R^{1/2}$ caps $\theta$, we randomly pick $B$ tubes of $\TT(\theta)$ that lie in our planar slab.  We have now picked $\sim B R^{1/2}$ tubes $T$.  An average point of the planar slab lies in $\sim B$ of our tubes.  Since the tubes were selected randomly, most points of the planar slab lie in $\sim B$ of our tubes.  

For each of our chosen tubes $T$ we choose $f_T$ so that $ |E_S f_T(x) | \gtrsim \chi_T$, and $\| f_T \|_2 \sim R^{1/2}$ and $\| f_T \|_\infty \sim R$.  Now we let $f$ be a sum with random signs: $f = \sum_T \pm f_T$.  Because of the random signs, $|Ef(x)| \gtrsim B^{1/2}$ on most points in the planar slab.  Since the planar slab has volume $\sim R^{5/2}$, $ \| E_S f\|_{L^p(B_R)} \gtrsim B^{1/2} R^{\frac{5}{2p}}. $
Moreover, a typical point lies in $B$ different tubes in random directions (within the plane).  If $B \ge K^{10 \eps}$, then almost every point will be $K^{-\eps}$ broad.  Therefore, we get:

$$ \| Br_{K^{-\eps}} Ef \|_{L^p(B_R)} \gtrsim B^{1/2} R^{\frac{5}{2p}}. $$

On the other hand, we estimate $\| f \|_2$ and $\| f \|_\infty$.  Since the $f_T$ are essentially orthogonal and $f$ is a sum of $B R^{1/2}$ functions $f_T$, and $\| f_T \|_2^2 \sim R$, we get

$$ \| f \|_2 \sim B^{1/2} R^{3/4}. $$

Also, 

$$ \| f \|_\infty \le B \max_T \|f_T \|_\infty \sim B R. $$

The most interesting case for the moment is $B \sim K^{10 \eps}$.  In this case, $B$ is a constant independent of $R$.  
If $ \| Br_{K^{-\eps}} Ef \|_{L^p(B_R)} \le C_\eps R^{\eps} \| f\|_2^{12/13} \| f \|_\infty^{1/13}$, then a direct computation shows that $p \ge 13/4 = 3.25$.  This shows that the exponent $3.25$ in Theorem \ref{introthm2} is sharp, given the right-hand side in the inequality.

It might be possible to get a smaller exponent $p$ by weighting $\| f \|_\infty$ more heavily.  For instance, the following estimate is consistent with the planar example and appears plausible to me: 

$$ \| Br_{K^{-\eps}} Ef \|_{L^3(B_R)} \le C_\eps R^{\eps} \| f\|_2^{2/3} \| f \|_\infty^{1/3}. $$

A second example involves a degree 2 algebraic surface called a regulus.  This example was pointed out to me by Joshua Zahl.  An example of a regulus is the surface $z = xy$.  The key feature of a regulus is that it is doubly ruled, meaning that every point lies in two lines in the surface.  The surface $z=xy$ contains two families of lines:  ``vertical lines'' of the form $x=a$, $z = ay$; and ``horizontal lines'' of the form $y=b$, $z = b x$.  Each point of the regulus lies in one line from each family.  If we want to work in a ball of radius $R$, it is natural to consider a rescaled surface defined by $z/R = (x/R) (y/R)$.  Instead of a planar slab, we consider the $R^{1/2}$-neighborhood of this surface in $B_R$.  This neighborhood contains two families of tubes, corresponding to the horizontal and vertical lines.  We can take $R^{1/2}$ ``horizontal tubes'', and $R^{1/2}$ ``vertical tubes'', all of radius $R^{1/2}$ and length $R$, so that each point lies in at least one horizontal tube and at least one vertical tube.  For each tube $T$, we choose $f_T$ as above so that $|E_S f_T| \gtrsim 1$ on $T$, and so that $\| f_T \|_2 \sim R^{1/2}$ and $\| f_T \|_\infty \sim R$, and we choose $f = \sum_T f_T$.

The computations of $\| E_S f\|_{L^p(B_R)}$ and $\| f \|_2$ and $\| f \|_\infty$ are all the same as in the planar example.  The points in the slab around the regulus are approximately $1/2$-broad.  Because $1/2$ is larger than $K^{-\eps}$, this example is not directly relevant to Theorem \ref{introthm2}, but I think it is morally relevant.  (It is a sharp example for the bilinear restriction estimate in \cite{T1}.)  

These two examples may hint that low-degree polynomial surfaces are relevant to the restriction problem and that if $\| E_S f \|_{L^p(B_R)}$ is large, then there should be a low degree surface where many of the wave packets $E_S f_T$ cluster.  These two low degree examples - planes and reguli - are also relevant in some incidence geometry problems about lines in $\RR^3$.  We consider one such problem in the next subsection and show how to study it using polynomial partitioning.

\subsection{Polynomial partitioning in incidence geometry}

In this section, we demonstrate how polynomial partitioning works in incidence geometry by proving a simple theorem.  This proof will serve as a model for the proof of Theorem \ref{introthm2}.  

Let us first formulate a question about lines in $\RR^3$.  We start with a naive question: how many 2-rich points can be formed by $L$ lines in $\RR^3$?  The answer is $L \choose 2$, which can happen if all the lines lie in a plane.  What if we forbid this simple answer by adding a rule that at most $10$ of the $L$ lines lie in any plane?  Can we still have $\sim L^2$ 2-rich points, or does the number drop off sharply?  The answer is that there can still be $\sim L^2$ 2-rich points.  The second example is that all the lines may lie in a regulus, such as the surface $z= xy$ discussed in the last subsection.  Taking $L/2$ vertical lines and $L/2$ horizontal lines, we get $L^2/4$ 2-rich points.  What if we forbid this example also by adding a rule that not too many lines lie in any plane or degree 2 surface?  We have now arrived at the following question:

If $\frak L$ is a set of $L$ lines in $\RR^3$ with at most $S$ lines in any plane or degree 2 algebraic surface, how big can $|P_2(\frak L)|$ be?

Katz and the author solved this problem in the range $S \ge L^{1/2}$ in \cite{GK}.  (Although it is still open for small values of $S$, such as $S = 10$.)  

\begin{theorem} (See Theorem 2.10 in \cite{GK})  If $\frak L$ is a set of $L$ lines in $\RR^3$ with at most $S$ lines in any plane or degree 2 algebraic surface, then

$$ |P_2(\frak L) | \lesssim S L + L^{3/2}. $$

\end{theorem}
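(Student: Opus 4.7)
The plan is to apply polynomial partitioning to the 2-rich point set $X := P_2(\mathfrak{L})$, with degree parameter $D$ chosen to balance the cellular and algebraic contributions. We work in the range $S \ge L^{1/2}$ (the regime treated in \cite{GK}) and set $D \sim L/S \le L^{1/2}$. By Theorem \ref{partcomb}, choose a polynomial $P$ of degree $\le D$ so that each cell $O_i$ of $\RR^3 \setminus Z(P)$ contains at most $C N/D^3$ points of $X$, where $N := |X|$. Split $N \le N_{\mathrm{cell}} + N_{\mathrm{alg}}$ by whether a 2-rich point lies off or on $Z(P)$, and handle the two contributions separately.

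\emph{Cellular case.} A line of $\mathfrak{L}$ not contained in $Z(P)$ meets $Z(P)$ in at most $D$ points and hence enters at most $D+1$ cells, so $\sum_i |\mathfrak{L}_i| \le L(D+1)$, where $\mathfrak{L}_i \subset \mathfrak{L}$ is the set of lines crossing $O_i$. The number of 2-rich points in each cell obeys the twin bounds $N_i \le |\mathfrak{L}_i|^2$ and $N_i \le C N/D^3$. Setting $K := \sqrt{C N/D^3}$ and using $\min(x^2, K^2) \le K x$,
\[
N_{\mathrm{cell}} \le K \sum_i |\mathfrak{L}_i| \lesssim L D \sqrt{N/D^3} = L\sqrt{N/D}.
\]
If the cellular piece dominates, $N \lesssim L^2/D = SL$.

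\emph{Algebraic case.} Partition $\mathfrak{L} = \mathfrak{L}_0 \sqcup \mathfrak{L}_1$ into lines contained in $Z(P)$ and transverse lines, and decompose $Z(P) = \bigcup_j Z_j$ into irreducible components of degrees $d_j$ with $\sum d_j \le D$. A 2-rich point on $Z(P)$ meeting some line of $\mathfrak{L}_1$ contributes to the $\le L D = L^2/S$ incidences between transverse lines and $Z(P)$, so such ``mixed'' points number at most $L D \le L^{3/2}$. A 2-rich point whose two lines come from different components $Z_j, Z_k$ lies in $Z_j \cap Z_k$; a line in $Z_j$ meets $Z_k$ in at most $d_k$ points, so cross-component contributions sum to $\le L D \le L^{3/2}$. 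For each $Z_j$ of degree $1$ or $2$, the hypothesis gives at most $S$ lines of $\mathfrak{L}_0$ inside it, hence at most $S^2$ contained same-component 2-rich points; summing over the $\le D$ such components gives $\lesssim D S^2 = SL$. Combining with the cellular bound produces $N \lesssim SL + L^{3/2}$ at $D = L/S$.

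\emph{Main obstacle.} The delicate part, which I have deferred, is bounding the contained same-component 2-rich points inside an irreducible $Z_j$ of degree $d_j \ge 3$: the hypothesis only controls planes and degree-$2$ surfaces, and a higher-degree ruled component could \emph{a priori} carry up to $O(d_j^2)$ lines of $\mathfrak{L}$ with as many as $O(d_j^4)$ pairwise incidences. This is the technical crux of \cite{GK}, handled via the flecnode polynomial: one shows that any irreducible surface of degree $\ge 3$ containing many lines is singly or doubly ruled in a manner that forces those lines to lie inside a union of planes and reguli, after which the $S$-hypothesis applies and the estimate closes.
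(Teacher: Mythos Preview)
The paper does not actually prove this theorem; it merely states it, citing \cite{GK}, and then proves instead the weaker Theorem \ref{incgeomintro}, whose hypothesis forbids more than $S$ lines in \emph{any} degree-$D$ surface rather than only in planes and quadrics. That stronger hypothesis trivializes the algebraic case (there are at most $S$ lines in $Z(P)$ altogether, giving at most $S^2$ contained 2-rich points), so the paper's argument for Theorem \ref{incgeomintro} sidesteps precisely the obstacle you flag. There is therefore no proof in the paper to compare your proposal against.

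That said, your outline is essentially the skeleton of the genuine \cite{GK} argument: partition at degree $D \sim L/S$, control the cellular piece via the line--cell incidence bound $\sum_i |\frak L_i| \le (D+1)L$, and decompose $Z(P)$ into irreducible components. Your cellular estimate and the transverse, cross-component, and degree-$\le 2$ same-component counts are correct as stated. You are also right that irreducible components of degree $\ge 3$ are the crux and that the flecnode polynomial (Cayley--Salmon) is the tool. One small inaccuracy: the flecnode argument does not push the lines back into planes and reguli. Rather, it shows that an irreducible surface of degree $d \ge 3$ containing more than $\sim d^2$ lines must be ruled; an irreducible ruled surface of degree $\ge 3$ is only singly ruled, and on a singly ruled surface distinct ruling lines meet in a controlled number of points. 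Summing these contributions over components gives the missing bound. So your sketch is accurate in spirit and honestly identifies its own gap, but since the paper itself defers the full proof to \cite{GK}, there is nothing further here to match it against.
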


\noindent (If $S \ge L^{1/2}$, then the $SL$ term dominates.  It is possible to get $\sim S L$ 2-rich points by choosing $L / S$ planes, and putting $S$ lines in each plane.  If $S \le L^{1/2}$, then the $L^{3/2}$ dominates.  It is unknown whether this estimate is sharp.)

In order to explain how to use polynomial partitioning, we prove a weak version of this  theorem.  

\begin{theorem} \label{incgeomintro} For any $\eps > 0$, there is a degree $D$ so that the following holds.  

Suppose that $\frak L$ is a set of $L$ lines in $\RR^3$ with at most $S$ lines in any algebraic surface of degree $D$.  Then 

$$ P_2(\frak L) \le C(\eps, S) L^{(3/2) + \eps}.$$  
\end{theorem}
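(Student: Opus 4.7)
The plan is a strong induction on $L$ modeled on the Solymosi--Tao setup mentioned in the introduction. Fix a degree $D = D(\eps)$ (to be chosen large in terms of $\eps$ alone) and set $p := 3/2 + \eps$; the base case $L \le L_0(\eps, S)$ is covered by the trivial bound $\binom{L}{2}$, absorbed into $C(\eps, S)$. For the inductive step, apply Theorem \ref{partcomb} to the set $X := P_2(\frak L)$ at degree $D$, producing a polynomial $P$ of degree at most $D$ whose complement $\RR^3 \setminus Z(P)$ decomposes into cells $O_1, \dots, O_N$ with $N \lesssim D^3$ and $|X \cap O_i| \le c_0 D^{-3}|X|$. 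I would split $|X| = |X \cap Z(P)| + \sum_i |X \cap O_i|$ and handle the two pieces separately.

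\textbf{Algebraic piece.} Since $Z(P)$ is an algebraic surface of degree $\le D$, the hypothesis forces at most $S$ lines of $\frak L$ to lie in $Z(P)$. A 2-rich point on $Z(P)$ either arises from two lines both contained in $Z(P)$ (contributing at most $\binom{S}{2}$ such points) or involves at least one line not contained in $Z(P)$, and each such line meets $Z(P)$ in at most $D$ points, contributing at most $LD$ points. Hence $|X \cap Z(P)| \le S^2 + LD$, which is $\le \tfrac12 C(\eps, S) L^{p}$ provided $L \ge L_0(\eps, S)$.

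\textbf{Cellular piece and main obstacle.} Let $L_i := |\{\ell \in \frak L : \ell \cap O_i \ne \emptyset\}|$; since each line crosses $Z(P)$ at most $D$ times, $\sum_i L_i \le (D+1) L$. The family of lines meeting $O_i$ inherits the degree-$D$ hypothesis, so for any non-saturated cell ($L_i < L$) the induction hypothesis yields $|X \cap O_i| \le C(\eps, S) L_i^{p}$, equivalently $L_i \ge (|X \cap O_i|/C(\eps, S))^{1/p}$. Summing and combining with $\sum_i L_i \le (D+1)L$ gives
$$ \sum_i |X \cap O_i|^{1/p} \;\le\; C(\eps, S)^{1/p} (D+1) L. $$
Feeding in the pointwise partitioning cap $|X \cap O_i| \le c_0 D^{-3}|X|$ through the split $X_i = X_i^{1/p} \cdot X_i^{(p-1)/p} \le X_i^{1/p} (c_0 D^{-3}|X|)^{(p-1)/p}$ yields
$$ \sum_i |X \cap O_i| \;\le\; C(\eps, S)^{1/p} (D+1) L \bigl(c_0 D^{-3} |X|\bigr)^{(p-1)/p}. $$
Saturated cells ($L_i = L$) number at most $D+1$ and contribute in total $\lesssim D^{-2}|X|$ via the partitioning cap, negligible for $D$ large. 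In the cellular-dominant case $\sum_i |X \cap O_i| \gtrsim |X|$, solving the display above for $|X|$ makes the exponents collapse as $p - 3(p-1) = 3 - 2p = -2\eps$, giving $|X| \lesssim D^{-2\eps} C(\eps, S) L^{p}$. Finally, choose $D = D(\eps)$ large enough that the implicit constant times $D^{-2\eps}$ is at most $1/2$, which combined with the algebraic bound closes the induction. The main obstacle is executing this H\"older-type balancing cleanly --- the $D^{-2\eps}$ saving is precisely what polynomial partitioning buys over the trivial $\binom{L}{2}$ count, and one must simultaneously prevent the saturated cells (where the induction hypothesis is not available) from spoiling that gain.
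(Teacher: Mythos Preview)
Your proof is correct and follows essentially the same approach as the paper: induction on $L$, polynomial partitioning at a fixed degree $D = D(\eps)$, splitting into the algebraic piece (bounded by $S^2 + DL$ exactly as you do) and the cellular piece, and closing the induction via the $D^{-2\eps}$ gain. The only difference is in how you extract the cellular gain: the paper simply pigeonholes to a single cell $O_i$ with $L_i \lesssim D^{-2}L$ and $|X \cap O_i| \sim D^{-3}|X|$ (possible since $\sum_i L_i \le (D+1)L$ while there are $\sim D^3$ significant cells), then inducts once on that cell, whereas you sum the inductive bound over all cells and balance via the H\"older split --- both routes produce the same $D^{3 - 2p} = D^{-2\eps}$ factor, but the pigeonhole version is a bit cleaner and avoids the need to track saturated cells separately.
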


\noindent (This theorem is mostly interesting for small $S$.  In this case, the final estimate is nearly as good as the best known estimate.)

\begin{proof}

The proof goes by induction on $L$.  We apply the polynomial partitioning theorem, Theorem \ref{partcomb}, to the set $P_2(\frak L)$, using polynomials of degree at most $D$.  (We will choose the value of $D = D(\eps)$ below.)  We let $O_i$ be the components of $\RR^n \setminus Z(P)$.  Each $O_i$ contains $\lesssim D^{-3} |P_2(\frak L)|$ points of $P_2(\frak L)$.  By a classical theorem of Milnor, \cite{M}, the number of cells $O_i$ is $\lesssim D^3$.  

If at least half of the points of $P_2(\frak L)$ are in the union of the cells, then we will use induction to study the contribution of each cell.  In this case, there must be $\sim D^3$ cells $O_i$ each containing $\sim D^{-3} |P_2(\frak L)|$ points of $P_2(\frak L)$.  A crucial fact about polynomials that makes them useful in this setting is that a line can intersect $Z(P)$ in at most $D$ points, unless it lies in $Z(P)$.  Therefore, each line of $\frak L$ can enter at most $D+1$ of the cells $O_i$.  Therefore, we can find a cell $O_i$ that intersects $\lesssim D^{-2} L$ lines and contains $\sim D^{-3} |P_2(\frak L)|$ points.  Let $\frak L_i$ be the set of lines of $\frak L$ that enter this cell $O_i$.  Applying induction to bound the 2-rich points of $\frak L_i$, we get the following estimates:

$$ | P_2 (\frak L) | \lesssim D^3 | P_2 (\frak L_i) | \le D^3 C(\eps, S) | \frak L_i| ^{(3/2) + \eps} \lesssim  C(\eps, S) D^3 ( D^{-2} L)^{(3/2) + \eps}.$$

Because of the exponent $(3/2) + \eps$, the total power of $D$ is $D^{-2 \eps}$.  In total we get:

$$ |P_2(\frak L)| \le (C D^{-2 \eps}) C(\eps, S) L^{(3/2) + \eps}, $$

\noindent where $C$ is an absolute constant.  We now choose $D = D(\eps)$ sufficiently large so that $C D^{-2 \eps} < 1$, and the induction closes.

If majority of the points of $P_2(\frak L)$ lie in $Z(P)$, then we estimate $|P_2(\frak L)|$ directly.  
We let $\frak L_Z \subset \frak L$ be the set of lines of $\frak L$ that are contained in $Z(P)$.  Each line of $\frak L \setminus \frak L_Z$ intersects $Z(P)$ in at most $D$ points.  
Therefore, there are at most $DL $ points of $P_2(\frak L) \cap Z(P)$ that involve a line from $\frak L \setminus \frak L_Z$.  Finally we have to estimate $| P_2(\frak L_Z)|$.  By assumption, any algebraic surface of degree at most $D$ contains at most $S$ lines of $\frak L$, and so $| \frak L_Z| \le S$.  Therefore, $| P_2(\frak L_Z)| \le S^2$.  If the majority of the points of $P_2(\frak L)$ lie in $Z(P)$, then we have $|P_2(\frak L)| \le 2 (DL + S^2)$.  By choosing $C(\eps, S)$ sufficiently large, this is at most $C(\eps, S) L^{(3/2) + \eps}$.   \end{proof}
 
To summarize, we estimate $|P_2(\frak L)|$ by breaking it into three contributions: the contributions from the cells $O_i$, the contributions from lines passing through $Z(P)$, and the contribution of lines in $Z(P)$.  We bound the contribution of the cells by induction, using that each cell contributes roughly equally and that each line can enter at most $\sim D$ cells.  
We bound the contribution of lines passing through $Z(P)$ using the fact that each line can only intersect $Z(P)$ in $D$ points.  Finally, we bound the contribution of lines lying in $Z(P)$ using the assumption that not too many lines lie in $Z(P)$.  We also note that this last contribution is a 2-dimensional problem, which makes it simpler than the original problem.  

In the next subsection, we will explain how to apply the polynomial partitioning approach to the restriction problem, and we will again see these three contributions.

\subsection{Polynomial partitioning and the restriction problem}

Now we're ready to start discussing polynomial partitioning and the restriction problem.  We will use the following version of polynomial partitioning, which is also a direct corollary of the Stone-Tukey ham sandwich theorem:

\begin{theorem} \label{partintro} Suppose that $W \ge 0$ is a (non-zero) $L^1$ function on $\RR^n$.  Then for any degree $D \ge 1$, we can find a non-zero polynomial $P$ of degree at most $D$ so that $\RR^n \setminus Z(P)$ is a union of $\sim_n D^n$ disjoint cells $O_i$, and so that all the integrals $\int_{O_i} W$ are equal.  
\end{theorem}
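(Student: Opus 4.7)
The natural approach is iterated bisection via the polynomial form of the ham sandwich theorem. First I would record the polynomial ham sandwich lemma: given any $\binom{n+d}{n}-1$ absolutely continuous finite measures on $\RR^n$, there is a non-zero polynomial of degree at most $d$ whose zero set simultaneously bisects all of them. This is obtained from Stone--Tukey by pushing the measures forward under the Veronese embedding $v_d : \RR^n \to \RR^N$, $N = \binom{n+d}{n}$, whose coordinates are the monomials of degree $\le d$: a polynomial of degree $\le d$ on $\RR^n$ pulls back from a linear functional on $\RR^N$, and the classical Borsuk--Ulam proof of ham sandwich applied to the pushforward measures produces such a functional that bisects each. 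In particular, any $M$ nonnegative $L^1$ measures on $\RR^n$ can be bisected by a non-zero polynomial of degree $\lesssim_n M^{1/n}$.

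Next I would iterate this, starting with the single measure $d\mu = W\,dx$. At step $1$, find $P_1$ (a hyperplane suffices) bisecting $\mu$, producing two open sign regions $\{P_1 > 0\}$ and $\{P_1 < 0\}$ of equal $\mu$-mass. At step $j$, I have $2^{j-1}$ sign regions, each of equal $\mu$-mass; restrict $\mu$ to each to obtain $2^{j-1}$ measures, and apply the polynomial ham sandwich lemma to find a non-zero polynomial $P_j$ of degree $d_j \lesssim_n 2^{(j-1)/n}$ that simultaneously bisects all of them. Inductively, the sign pattern of $(P_1,\dots,P_j)$ then partitions $\RR^n \setminus Z(P_1 \cdots P_j)$ into $2^j$ sign regions of equal $\mu$-mass, each non-empty because bisecting a positive measure always leaves both halves with positive mass.

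Set $P := P_1 P_2 \cdots P_k$ and $O_i$ equal to the $2^k$ sign regions. Then
$$\Deg P \;\le\; \sum_{j=1}^{k} d_j \;\lesssim_n\; \sum_{j=1}^{k} 2^{(j-1)/n} \;\lesssim_n\; 2^{k/n},$$
since $2^{1/n} > 1$ makes the geometric series dominated by its last term. Choosing $k$ so that $2^{k/n} \sim_n D$, i.e.\ $2^k \sim_n D^n$, yields $\Deg P \le D$ (after adjusting constants) and produces $\sim_n D^n$ disjoint cells $O_i$ covering $\RR^n \setminus Z(P)$ with $\int_{O_i} W$ all equal to $2^{-k} \|W\|_{L^1}$. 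Non-vanishing of $P$ is automatic, since each $P_j$ is represented by a unit vector in its coefficient sphere.

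The only real work lies in the polynomial ham sandwich lemma (the Veronese reduction to Stone--Tukey); everything afterwards is bookkeeping. A minor interpretive point is that the ``cells'' $O_i$ here are sign regions of $P$ rather than connected components of $\RR^n \setminus Z(P)$ — a single sign region may decompose into several components — but this is harmless for applications, and in any case the total number of connected components of $\RR^n \setminus Z(P)$ is also $\sim_n D^n$ by Milnor's bound, so the stated order of magnitude is correct either way.
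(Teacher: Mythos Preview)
Your proposal is correct and follows essentially the same route as the paper: iterated application of the polynomial ham sandwich theorem (which the paper also derives from Stone--Tukey/Borsuk--Ulam, phrased directly on the vector space $\Poly_D(\RR^n)$ rather than via the Veronese embedding), taking the product $P = P_1 \cdots P_k$, summing the geometric series of degrees, and choosing $k$ so that $2^k \sim_n D^n$. Your closing remark that the $O_i$ are sign regions rather than connected components matches the paper's convention exactly.
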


We will give a detailed sketch of the proof of Theorem \ref{introthm2}.   In the introduction, we will write $\Br Ef$ for $\Br_\alpha Ef$ where $\alpha$ is approximately $K^{-\eps}$ but may change a little during the argument.  

We want to estimate the integral $\int_{B_R} \Br Ef^{3.25}$ for a large radius $R$.  We apply the partitioning theorem to the function $\chi_{B_R} \Br Ef^{3.25}$, with a degree $D$ that we will choose below.   By Theorem \ref{partintro}, we can find a polynomial $P$ of degree at most $D$ so that $\RR^n \setminus Z(P)$ is a disjoint union of $\sim D^3$ cells $O_i$, and for each $i$

\begin{equation} \label{equationequi}
\int_{B_R \cap O_i} \Br Ef^{3.25} \sim D^{-3} \int_{B_R} \Br Ef^{3.25}.
\end{equation}

In the combinatorial setting, it was crucial to observe that each line can enter at most $D+1$ cells $O_i$.  In some sense, the tubes $T$ are analogous to lines, but since the tubes have some finite width, it may happen that a tube $T$ enters far more than $D$ cells - a tube $T$ may even enter all of the cells.  Let $W$ be the neighborhood of $Z(P)$ with thickness equal to the radius of a tube $T$.  Define $O_i' := (O_i \cap B_R) \setminus W$.  If a tube $T$ enters $O_i'$, then the central line of $T$ must enter $O_i$.  Therefore, each tube $T$ intersects $O_i'$ for at most $D+1$ values of $i$.

We now break the integral that we care about, $\int_{B_R} \Br Ef^{3.25}$, into pieces coming from the cells $O_i'$ and a piece coming from the cell wall $W$.  (This decomposition is analogous to considering the points of $P_2(\frak L)$ in the cells $O_i$ and the points in $Z(P)$.)  Suppose first that the contribution from the cells dominates the integral.  In this case, there must be $\sim D^3$ cells $O_i'$ so that for each of them

\begin{equation} \label{equationequi'}
\int_{B_R \cap O_i'} \Br Ef^{3.25} \sim D^{-3} \int_{B_R} \Br Ef^{3.25}.
\end{equation}

Since $Ef_T$ decays very sharply outside of $T$, on the set $O_i'$, $Ef$ is essentially equal to the sum of $Ef_T$ over all the $T$ that intersect $O_i'$.  We let $\TT_i$ be the union of all the tubes $T$ (in any $\TT(\theta)$) which intersect $O_i'$, and we define $f_i = \sum_{T \in \TT_i} f_T$.  On $O_i'$, we essentially have $Ef = Ef_i$.  We also essentially have $\Br Ef = \Br Ef_i$.  Now we would like to estimate $\int_{O_i'} \Br Ef_i^{3.25}$ by using induction.

To set up the induction, we have to consider what we know about $f_i$.  Theorem \ref{introthm2} involves $\| f \|_\infty$, but $\| f_i \|_\infty$ is not very well behaved.  We don't have any way to show that $\| f_i \|_\infty$ is significantly smaller than $\| f \|_\infty$, and I think it may even be larger.  Because the functions $f_T$ are essentially orthogonal, we get the following estimate about $f_i$: for each $\theta$, and each $i$, 

\begin{equation} \label{f_Torthog}
\int_\theta |f_i|^2 \lesssim \int_\theta |f|^2.
\end{equation}

\noindent Moreover, because each tube enters $\lesssim D$ cells $O_i'$, the orthogonality of $f_T$ implies that

\begin{equation}
\sum_i \int_S |f_i|^2 \lesssim D \int_S |f|^2. 
\end{equation}

To make the induction work, we need to prove a stronger theorem that involves $\max_\theta \| f \|_{L^2(\theta)}$ instead of $\| f \|_\infty$.  It's convenient to write our inequality in terms of the average of $|f|^2$ over a cap $\theta$, which we write as $\oint_{\theta} |f|^2$.  

\begin{theorem} \label{introbroad12/13}  Let $S$ be the truncated paraboloid.  
For any $\epsilon > 0$, there is a large constant $K = K(\eps)$ so that for every radius $R$ the following holds.  If $f: S \rightarrow \CC$, and for every $R^{-1/2}$-cap $\theta$, 

\begin{equation} \label{avgtheta1}
\oint_\theta |f|^2 \le 1,
\end{equation}

then 

\begin{equation}  \label{12/13equation}
 \int_{B_R} \Br Ef^{3.25} \le C_\eps R^{\eps} \left( \int_S |f|^2 \right)^{(3/2) + \eps}. 
 \end{equation}

\end{theorem}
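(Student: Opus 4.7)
The plan is to prove Theorem \ref{introbroad12/13} by induction on the pair $(R, A)$ where $A := \int_S |f|^2$, with the inductive hypothesis granting the same inequality for all $(R', A')$ with $R' \le R$ and $A' \le A/2$, together with a trivial base case when $A$ is so small that a pointwise bound on $\Br Ef$ suffices. Fix $\epsilon > 0$; the argument will fix a degree $D = D(\epsilon)$ at the end.

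Apply Theorem \ref{partintro} to the weight $\chi_{B_R} (\Br Ef)^{3.25}$ to obtain a polynomial $P$ of degree at most $D$ whose zero set $Z(P)$ partitions $\RR^3 \setminus Z(P)$ into $\sim D^3$ cells $O_i$ on which the integral equidistributes. Let $W$ be the $R^{1/2}$-neighborhood of $Z(P)$ and $O_i' := (O_i \cap B_R)\setminus W$; as noted in the introduction, each wave packet tube $T$ meets at most $D+1$ of the $O_i'$ because its core line meets $Z(P)$ at most $D$ times. Bound
\begin{equation*}
\int_{B_R} (\Br Ef)^{3.25} \lesssim \sum_i \int_{O_i'} (\Br Ef)^{3.25} + \int_{W \cap B_R} (\Br Ef)^{3.25},
\end{equation*}
and split into a cellular case and a wall case according to which term dominates.

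In the cellular case, on each $O_i'$ approximate $Ef$ by $Ef_i$ where $f_i := \sum_{T \in \TT_i} f_T$ and $\TT_i$ is the set of tubes meeting $O_i'$. Near-orthogonality of the wave packets preserves the cap hypothesis $\oint_\theta |f_i|^2 \le 1$ and gives $\sum_i \int_S |f_i|^2 \lesssim D A$ since each tube appears in at most $D+1$ of the $\TT_i$. A pigeonhole argument combining equidistribution with this sum produces a cell $O_{i_0}'$ with $a_{i_0} := \int_S |f_{i_0}|^2 \lesssim A/D^2$ and cellular contribution still $\gtrsim D^{-3} \int_{B_R} (\Br Ef)^{3.25}$. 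Applying the inductive hypothesis to $f_{i_0}$ gives
\begin{equation*}
\int_{B_R} (\Br Ef)^{3.25} \lesssim D^3 \cdot C_\epsilon R^\epsilon (A/D^2)^{3/2+\epsilon} \lesssim D^{-2\epsilon} C_\epsilon R^\epsilon A^{3/2+\epsilon},
\end{equation*}
and choosing $D$ large enough in terms of $\epsilon$ closes the induction with the same constant $C_\epsilon$.

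The wall case, where $\int_{W \cap B_R} (\Br Ef)^{3.25}$ dominates, is the main obstacle and the genuinely new ingredient. Here I plan to split the tubes meeting $W$ into those tangential to $Z(P)$ (direction making angle $\lesssim R^{-1/2}$ with the tangent plane at some nearby contact point) and those transverse. Transverse tubes cross $W$ in length $\lesssim D R^{1/2}$, so their contribution localizes to sub-balls of radius much smaller than $R$ and is controlled by re-invoking the inductive hypothesis at reduced radius. Tangential tubes are trapped in an $R^{1/2}$-neighborhood of the two-dimensional degree-$D$ algebraic surface $Z(P)$, reducing the problem to an essentially lower-dimensional one; here the plan is to combine the broad hypothesis, which forces wave packets from well-separated caps of $S$ to contribute simultaneously, with a bilinear restriction estimate in the spirit of Wolff--Tao adapted to packets tangent to an algebraic surface of bounded degree. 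The hard part will be balancing the exponents so the tangential contribution is bounded by $A^{3/2+\epsilon}$ with only a $C_\epsilon R^\epsilon$ loss; this is precisely where the exponent $3.25 = 13/4$ is forced, as it sits at the boundary where the bilinear gain, the $K^{-\epsilon}$ broadness gain, and the $L^2$--$L^\infty$ trade-off all match.
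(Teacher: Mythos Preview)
Your cellular case is correct and essentially identical to the paper's argument: equidistribution plus the $D+1$ entry bound gives a cell with $\int_S |f_i|^2 \lesssim D^{-2} A$, and the exponent $3/2+\epsilon$ on the right-hand side produces the crucial $D^{-2\epsilon}$ saving.

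The wall case, however, has real gaps relative to what the paper actually does.

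\textbf{Transverse tubes.} Your claim that ``transverse tubes cross $W$ in length $\lesssim D R^{1/2}$, so their contribution localizes to sub-balls of radius much smaller than $R$'' is not how the argument runs and is not obviously correct as stated. The paper covers $B_R$ by balls $B_j$ of radius $R^{1-\delta}$ (which is \emph{not} much smaller than $R$; the gain from induction on radius is only $R^{-\delta\epsilon}$), defines tangential/transverse relative to each $B_j$, and proves a nontrivial geometric lemma (Lemma~\ref{transbound}) that each tube belongs to $\TT_{j,\mathrm{trans}}$ for at most $\operatorname{Poly}(D)$ values of~$j$. This lemma is not automatic: a tube of positive width can intersect $W$ in ways a line cannot, and the proof occupies Section~4.1. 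The numerology then requires $\operatorname{Poly}(D)\cdot R^{-\delta\epsilon}\le 1$, which forces a careful hierarchy $\delta_{\mathrm{trans}}\ll\delta_{\mathrm{deg}}\ll\delta\ll\epsilon$ (and in fact the paper takes $D=R^{\delta_{\mathrm{deg}}}$, not a constant). Your sketch conflates the line-level intersection count with the tube-level one and does not identify the scale $R^{1-\delta}$ at which to induct.

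\textbf{Tangential tubes.} Invoking a Wolff--Tao bilinear estimate is the wrong tool here. The paper's tangential bound (Proposition~\ref{tangtermbound}) rests on two specific ingredients you do not mention. First, a second geometric lemma (Lemma~\ref{tangbound}): the tubes in $\TT_{j,\mathrm{tang}}$ can only come from $\lesssim R^{1/2+O(\delta)}$ of the $\sim R$ caps~$\theta$. This is what converts the cap hypothesis $\oint_\theta|f|^2\le 1$ into $\|f_{\tau,j,\mathrm{tang}}\|_2^2\lesssim R^{-1/2}$ and is precisely where the exponent $3/2$ on the right-hand side matches up with $p=13/4$ on the left. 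Second, on each $R^{1/2}$-cube $Q\subset W\cap B_j$ the tangential tubes through $Q$ are nearly coplanar (they all make small angle with a fixed tangent plane of $Z(P)$), so the problem is genuinely two-dimensional there and one applies C\'ordoba's $L^4$ argument, then interpolates with the trivial $L^2$ bound. A three-dimensional bilinear restriction estimate would not by itself see the direction restriction coming from tangency, and would give the wrong exponent.

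So the overall architecture of your plan is right, but the two geometric lemmas about how tubes interact with an algebraic variety---the $\operatorname{Poly}(D)$ transversality count and the $R^{1/2}$ direction count for tangential tubes---are the missing ideas, and the tangential endgame is C\'ordoba's $L^4$ estimate (exploiting near-coplanarity) rather than Wolff--Tao.
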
 

This theorem implies Theorem \ref{introthm2} by a direct computation. 
(Recall that in the introduction $\Br Ef$ stands for $\Br_\alpha Ef$ with $\alpha \sim K^{- \eps}$.
Theorem \ref{introbroad12/13} is slightly stronger than Theorem \ref{introthm2}, because it can happen that $\max_\theta \oint_\theta |f|^2 $ is much smaller than $\| f \|_\infty^2$.  In particular, the planar example in the Examples section is sharp for Theorem \ref{introbroad12/13} with any value of $B \ge K^{10 \eps}$.  )
We will see in the proof that the exponent $(3/2) + \eps$ appears here for the same reason that it appeared in the incidence geometry theorem from the last subsection.  Once we have fixed the exponent $(3/2) + \eps$ on the right-hand side, 3.25 is the smallest possible exponent on the left-hand side, because of the planar example.  

We now sketch the proof of Theorem \ref{introbroad12/13}.  To estimate $\int_{B_R} \Br Ef^{3.25}$, we break $B_R$ into cells as above.  Suppose that the integral is dominated by the contribution from the cells.  Then we have $\sim D^3$ cells $O_i'$ so that

$$ \int_{B_R} \Br Ef^{3.25} \lesssim D^3 \int_{O_i'} \Br Ef_i^{3.25}. $$

We can choose one of these cells $O_i'$ so that $\int_S |f_i|^2 \lesssim D^{-2} \int_S |f|^2$, and $\max_\theta \oint_\theta |f_i|^2 \lesssim \max_\theta \oint_\theta |f|^2 \le 1$.  By induction, we can assume that Theorem \ref{introbroad12/13} holds for $f_i$, giving

$$  \int_{B_R} \Br Ef^{3.25} \le C D^3 C_\eps R^{\eps}  \left( D^{-2} \int_S |f|^2 \right)^{(3/2) + \eps} = C D^{-2 \eps} \cdot (\textrm{Right-hand side of equation \ref{12/13equation}}). $$

\noindent We choose $D$ large enough that $C D^{- 2 \eps} \le 1$ and the induction closes.   

Next we consider the case when our integral is dominated by the contribution from $W$, the region near the algebraic surface $Z(P)$.   As in the combinatorial case, there are two kinds of tubes: tubes that pass through $W$ transversally and tubes that lie in $W$.  Roughly, we will show that a tube $T$ can only pass through $W$ transversally in $\lesssim \Poly(D)$ places, and we will use this estimate to bound the transverse tubes using induction.  The tubes that lie in $W$ over a long stretch will be called tangential tubes.  The contribution of the tangential tubes is morally a 2-dimensional problem - similar to the restriction problem in $\RR^2$.  We will bound the tangential contribution by using C\'ordoba's $L^4$ argument from \cite{C}.  

Here is a little bit more detail.  We pick a small parameter $\delta$ so that $R^\delta$ is much bigger than $\Poly(D)$ but still small compared to $R^\eps$.  Now we divide $B_R$ into $\sim R^{3 \delta}$ smaller balls $B_j$ of radius $\sim R^{1 - \delta}$.  For each $j$, we define $\TT_{j, trans}$ to be the set of tubes $T$ that intersect $W \cap B_j$ ``transversally''.  We let $\TT_{j, tang}$ be the set of tubes $T$ that intersect $W \cap B_j$ ``tangentially''.  We will postpone the precise definition to the body of the paper.

To bound the transverse tubes, we first show that any tube $T$ lies in $\TT_{j, trans}$ for at most $\Poly(D)$ different balls $B_j$.  Note that the tube $T$ intersects $\sim R^\delta$ balls $B_j$, and $R^\delta$ is far larger than $\Poly(D)$.  We define $f_{j, trans} = \sum_{T \in \TT_{j, trans}} f_T$.  If the transverse terms dominate, then 

$$ \int_{B_R} \Br Ef^{3.25} \lesssim \sum_j \int_{B_j} \Br Ef_{j, trans}^{3.25}. $$

\noindent Since $B_j$ is smaller than $B_R$, we can assume by induction on the radius that Theorem \ref{introbroad12/13} holds for each integral on the right-hand side.  The average of $|f_{j, trans}|^2$ on a cap of radius $(R^{1 - \delta})^{-1/2}$ is $\lesssim$ the maximum of $\oint_{\theta} |f_{j,trans}|^2$ on a $R^{-1/2}$-cap $\theta$, and $\max_\theta \oint_\theta |f_{j, trans}|^2 \lesssim \max_\theta \oint_\theta |f|^2 \le 1$.  
Moreover, since each tube $T$ lies in only $\Poly(D)$ sets $\TT_{j, trans}$, we get that $\sum_j \int_S |f_{j, trans}|^2 \le \Poly(D) \int_S |f|^2$.  Plugging this in, we get

$$ \int_{B_R} \Br Ef^{3.25} \le \Poly(D) C_\eps (R^{1-\delta})^\eps \left( \int_S |f|^2 \right)^{(3/2) + \eps}  = $$

$$ = \Poly(D) R^{-\delta \eps} \cdot (\textrm{Right-hand side of equation \ref{12/13equation}}). $$

\noindent As long as $\Poly(D) R^{-\delta \eps} \le 1$, the induction closes.  We can assume that $R$ is very large, and we choose $D, \delta$ in such a way that this factor is at most 1.  This method of dealing with the transverse tubes is based on the ``induction-on-scales'' argument from \cite{W1} and \cite{T2}.

Finally, we discuss the contribution of the tangential tubes.  We estimate this contribution directly without using induction.  It might be helpful for the reader to imagine the planar example during this discussion.  In the planar example, the contribution of the tangential tubes would dominate the integral, and the bounds that we prove are all sharp in the planar example.  

One key point is that the set of tangential tubes $\TT_{j, tang}$ cannot contain tubes of $\TT(\theta)$ for every cap $\theta$.  In fact, $\TT_{j, tang}$ can only include contributions from roughly $R^{1/2}$ out of the $R$ caps $\theta$, as in the planar example.  
  
Because we are estimating the broad part of $Ef$, we can reduce the tangential contribution to a bilinear-type estimate.  We can choose $K^{-1}$-separated $K^{-1}$-caps $\tau_1$ and $\tau_2$, and it suffices to bound an integral of the form

\begin{equation} 
 \int_{W \cap B_j} |E f_{\tau_1, j, tang}|^{p/2} |Ef_{\tau_2, j, tang}|^{p/2},
 \end{equation}

\noindent where $f_{\tau_1, j, tang}$ is the sum of $f_T$ where $T \in \TT_{j, tang}$ and $\supp f_T \subset \tau_1$.   The motivation for introducing broad points is to get a bilinear integral at this stage of the argument, instead of the linear integral $\int_{W \cap B_j} | E f_{j, tang}|^p$.  Given our control of $f$, there are much better estimates for the bilinear integral than the linear one.  

We are ultimately interested in $p = 3.25$, but we first prove bounds for $p=2$ and $p=4$ and then interpolate between them.  When $p=2$ the estimate basically boils down to Plancherel.  For $p=4$ we proceed as follows.  

We divide $W \cap B_j$ into cubes $Q$ of side length $\sim R^{1/2}$.  For each cube $Q$, the tubes in $\TT_{j, tang}$ that go through $Q$ lie very close to a plane -- the plane is the tangent plane $T_x Z(P)$ for a point $x \in Z(P)$ near to $Q$.   The angle between the tubes $T$ and the plane is roughly $R^{-1/2}$.  Once we have reduced to the contributions of these coplanar tubes, the problem is essentially 2-dimensional.  As observed in \cite{T2}, the integral $\int_{Q} |E f_{\tau_1, j, tang}|^{2} |Ef_{\tau_2, j, tang}|^{2}$ can be controlled by the $L^4$ argument from \cite{C}.

C\'ordoba's argument gives a square root cancellation estimate.  Recall that $|Ef_T|$ is morally well-modelled by $R^{-1/2} \| f_T\|_2 \chi_T$.  The $L^4$ argument gives the following inequality:

\begin{equation}
\int_{Q} |E f_{\tau_1, j, tang}|^{2} |Ef_{\tau_2, j, tang}|^{2} \lesssim \int_Q \left( \sum_{T_1 \in \TT_{\tau_1, j, tang}} R^{-1} \| f_{T_1} \|^2_2 \chi_{T_1} \right) \left( \sum_{T_2 \in \TT_{\tau_2, j, tang}} R^{-1} \|f_{T_2} \|_2^2 \chi_{T_2} \right) . 
\end{equation}

\noindent Summing over $Q$, it's now straightforward to get a bound for  $\int_{W \cap B_j} |E f_{\tau_1, j, tang}|^{2} |Ef_{\tau_2, j, tang}|^{2}$ and then for $ \int_{W \cap B_j} |E f_{\tau_1, j, tang}|^{p/2} |Ef_{\tau_2, j, tang}|^{p/2}$ with any $2 \le p \le 4$.  At this stage, we can use the fact that $\TT_{j, tang}$ only includes tubes from roughly $R^{1/2}$ caps $\theta$.  The resulting estimates all match the planar example, so they are sharp.

\subsection{Outline of the paper}

In Section 1, we review polynomial partitioning, deducing the partitioning theorem that we need from the Borsuk-Ulam theorem in topology.  In Section 2, we review background related to the restriction problem.  In particular we review wave packet decompositions and parabolic scaling.  In this section, we also review the idea of broad points and explain how to deduce $L^p$ estimates for $Ef$ from $L^p$ estimates for the broad part of $Ef$.  In Section 3 and 4, we prove our main theorem.  Section 3 contains the harmonic analysis part of the argument.  We also need some geometric estimates about the way tubes interact with an algebraic surface.  We prove these estimates in Section 4, using some simple algebraic geometry and differential geometry.

\section{Review of polynomial partitioning}

In this section, we review polynomial partitioning and prove the result that we use.  We will need modifications of the results in the literature, so we give self-contained proofs.  Polynomial partitioning is based on the Stone-Tukey ham sandwich theorem from topology, and we begin by recalling it.  

For any function $f$, we write $Z(f)$ for the zero-set of $f$: $Z(f) := \{ x | f(x) = 0 \}$.  

\begin{theorem} \label{STham} (Stone-Tukey, \cite{ST}) Suppose that $V$ is a vector space of continuous functions on $\RR^n$.  Suppose that for each non-zero element $f \in V$, the set $Z(f) \subset \RR^n$ has measure zero.

Let $W_1, ..., W_N$ be $L^1$-functions on $\RR^n$, and suppose that $N < \Dim V$.  Then there exists a non-zero function $v \in V$ so that for each $W_j$, $j= 1, ..., N$,

$$ \int_{\{ v > 0 \} } W_j = \int_{ \{ v < 0 \} } W_j. $$

\end{theorem}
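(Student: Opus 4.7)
The plan is to reduce this statement to the Borsuk--Ulam theorem: every continuous odd map $S^{d-1} \to \RR^{d-1}$ has a zero. Set $d := \Dim V$, and use any norm on the finite-dimensional space $V$ to identify the unit sphere $S(V) \subset V$ with $S^{d-1}$. Define a map $\Phi : S(V) \to \RR^N$ by
\[
\Phi(v)_j := \int_{\{v > 0\}} W_j - \int_{\{v < 0\}} W_j, \qquad j = 1, \ldots, N.
\]
This map is manifestly odd, since replacing $v$ by $-v$ swaps the two open sets $\{v>0\}$ and $\{v<0\}$. Since $N < d$, I can regard $\Phi$ as a map into $\RR^{d-1}$ (padding with zero coordinates), so Borsuk--Ulam produces a nonzero $v \in V$ with $\Phi(v) = 0$. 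The measure-zero hypothesis on $Z(v)$ then guarantees that $\{v > 0\} \cup \{v < 0\}$ has full measure, so $\Phi_j(v) = 0$ is exactly the desired balanced-integrals conclusion.

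The only nontrivial step is the continuity of $\Phi$, and this is precisely where the hypothesis on zero sets is used. If $v_n \to v$ in $V$, then because $V$ is a finite-dimensional space of continuous functions, all norms on $V$ are equivalent and $v_n \to v$ uniformly on compact sets, hence pointwise everywhere. For any $x \notin Z(v)$, the value $v(x)$ is nonzero, so $\mathrm{sign}(v_n(x)) = \mathrm{sign}(v(x))$ for all large $n$, which means $\chi_{\{v_n > 0\}}(x) \to \chi_{\{v > 0\}}(x)$. By the hypothesis on $Z(v)$, this convergence holds almost everywhere. Dominated convergence with majorant $|W_j| \in L^1$ then gives $\int_{\{v_n > 0\}} W_j \to \int_{\{v > 0\}} W_j$, and similarly for the other half-space; hence $\Phi$ is continuous.

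In summary, the three steps are: (i) introduce $\Phi$ and note oddness; (ii) prove continuity via the zero-measure assumption and dominated convergence; (iii) invoke Borsuk--Ulam to produce a zero. The only delicate point is continuity, and the hypothesis that $Z(f)$ has measure zero for every nonzero $f \in V$ is exactly the condition needed to prevent the indicator functions $\chi_{\{v > 0\}}$ from jumping on sets of positive measure as $v$ varies.
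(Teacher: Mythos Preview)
Your proof is correct and follows essentially the same approach as the paper: define the odd map $\Phi$ via signed integrals, establish continuity using the measure-zero hypothesis together with dominated convergence, and then apply Borsuk--Ulam. The only cosmetic difference is that the paper first passes to an $(N+1)$-dimensional subspace of $V$ (which also covers the possibility that $V$ is infinite-dimensional), whereas you work directly with $d = \Dim V$ and pad $\Phi$ with zero coordinates; either device is fine.
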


In our application, $V$ will be the vector space of polynomials on $\RR^n$ of degree at most $D$.  The dimension of this space is ${D + n \choose n} \sim_n D^n$.  It's straightforward to check that for any non-zero polynomial $P$, $Z(P)$ has measure zero.  Therefore, Theorem \ref{STham} has the following corollary:

\begin{cor} \label{polyham} (Polynomial ham sandwich theorem) If $W_1, ..., W_N$ are $L^1$-functions on $\RR^n$, then there exists a non-zero polynomial $P$ of degree $\le C_n N^{1/n}$ so that for each $W_j$,

$$ \int_{\{ P > 0 \} } W_j = \int_{ \{ P < 0 \} } W_j. $$

\end{cor}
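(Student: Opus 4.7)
The plan is to deduce Corollary \ref{polyham} directly from Theorem \ref{STham} by taking $V$ to be the vector space of real polynomials on $\RR^n$ of degree at most $D$, for an appropriate choice of $D$ depending on $N$.

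First I would verify that this $V$ satisfies the hypotheses of Stone-Tukey. Polynomials are continuous, so this is immediate. The other hypothesis is that $Z(P)$ has Lebesgue measure zero for every non-zero $P \in V$. This is a standard fact, which I would prove by induction on $n$: for $n = 1$ a non-zero polynomial has only finitely many roots; for general $n$, write $P(x_1,\dots,x_n) = \sum_k P_k(x_2,\dots,x_n) x_1^k$ and observe that for any fixed $(x_2,\dots,x_n)$ outside the zero set of the highest non-vanishing coefficient $P_{k_0}$, the slice $\{x_1 : P(x_1,\dots,x_n) = 0\}$ is finite; Fubini then gives that $Z(P)$ has measure zero.

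Next I would count dimensions. The space $V$ has basis given by monomials $x_1^{a_1} \cdots x_n^{a_n}$ with $a_1 + \cdots + a_n \le D$, so $\Dim V = \binom{D+n}{n}$. For fixed $n$, this grows like $D^n/n!$, so there is a constant $c_n > 0$ such that $\Dim V \ge c_n D^n$ for all $D \ge 1$. In particular, I can choose $D = \lceil C_n N^{1/n} \rceil$ with $C_n = c_n^{-1/n}$ (or any slightly larger constant) to guarantee $\Dim V > N$.

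With these two conditions verified, Theorem \ref{STham} applied to $W_1,\dots,W_N$ produces a non-zero $P \in V$, i.e., a non-zero polynomial of degree at most $D \le C_n N^{1/n}$, such that
\[
\int_{\{P > 0\}} W_j = \int_{\{P < 0\}} W_j \qquad \text{for each } j = 1, \dots, N,
\]
which is exactly the statement of Corollary \ref{polyham}. There is no real obstacle here: the entire content of the corollary is the dimension count $\binom{D+n}{n} \sim_n D^n$, which says that the vector space of polynomials of bounded degree is large enough to apply Stone-Tukey with a polynomial degree that grows only like $N^{1/n}$.
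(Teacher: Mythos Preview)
Your proposal is correct and follows exactly the approach the paper takes: apply Theorem \ref{STham} with $V$ the space of polynomials of degree at most $D$, use the dimension count $\binom{D+n}{n}\sim_n D^n$ to choose $D\lesssim_n N^{1/n}$ with $\Dim V>N$, and note that $Z(P)$ has measure zero for non-zero $P$. The paper states this in one sentence just before the corollary; your version simply supplies the standard details (the Fubini argument for measure zero and the explicit choice of $D$).
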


The proof of Theorem \ref{STham} is an elegant application of the Borsuk-Ulam theorem, which we now recall.  

\begin{theorem} (Borsuk-Ulam) If $F: S^N \rightarrow \RR^N$ is a continuous function obeying the antipodal condition $F(- v) = - F(v)$, then there exists a $v \in S^N$ with $F(v) = 0$. 
\end{theorem}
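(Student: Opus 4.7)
The plan is to argue by contradiction and reduce to a statement purely about antipodal maps between spheres. Suppose $F(v)\neq 0$ for every $v\in S^N$. Then $g(v):=F(v)/|F(v)|$ defines a continuous antipodal map $g:S^N\to S^{N-1}$, so it suffices to prove the equivalent (sharper) assertion that no continuous antipodal map $S^N\to S^{N-1}$ can exist.

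Because $g$ intertwines the antipodal actions on source and target, it descends through the $\mathbb{Z}/2$ quotients to a continuous map $\bar g:\mathbb{RP}^N\to\mathbb{RP}^{N-1}$. I would then exploit the $\mathbb{Z}/2$ cohomology rings of projective space: $H^*(\mathbb{RP}^n;\mathbb{Z}/2)\cong\mathbb{Z}/2[\alpha_n]/(\alpha_n^{n+1})$, where $\alpha_n\in H^1$ is the unique nonzero degree-one class, and where the top power $\alpha_n^n\neq 0$. The heart of the proof is the claim that $\bar g^*\alpha_{N-1}=\alpha_N$. Granted this, naturality of the cup product yields $\alpha_N^N=\bar g^*(\alpha_{N-1})^N=\bar g^*(\alpha_{N-1}^N)=\bar g^*(0)=0$, since $\alpha_{N-1}^N$ already lies in $H^N(\mathbb{RP}^{N-1};\mathbb{Z}/2)=0$ on dimensional grounds. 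This contradicts $\alpha_N^N\neq 0$ and finishes the argument.

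The main obstacle is establishing the cohomological identification $\bar g^*\alpha_{N-1}=\alpha_N$. Since $H^1(\mathbb{RP}^N;\mathbb{Z}/2)=\mathbb{Z}/2$, it is enough to verify that $\bar g^*\alpha_{N-1}\neq 0$. The cleanest route uses the fact that $\alpha_n$ classifies the nontrivial double cover $S^n\to\mathbb{RP}^n$ (equivalently, it is the first Stiefel--Whitney class of the tautological real line bundle, or the generator of $\mathrm{Hom}(\pi_1(\mathbb{RP}^n),\mathbb{Z}/2)$). By construction of $\bar g$ from the antipodal map $g$, the pullback of the double cover $S^{N-1}\to\mathbb{RP}^{N-1}$ along $\bar g$ is isomorphic, as a covering space of $\mathbb{RP}^N$, to $S^N\to\mathbb{RP}^N$; this cover is connected and hence nontrivial, so the pullback of its classifying class is nonzero, as required. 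Concretely, a great semicircle in $S^N$ from $v$ to $-v$ descends to a noncontractible loop in $\mathbb{RP}^N$, and $g$ sends it to a path from $g(v)$ to $-g(v)$ in $S^{N-1}$, which likewise projects to a noncontractible loop in $\mathbb{RP}^{N-1}$; thus $\bar g_*$ is nontrivial on $\pi_1$, hence $\bar g^*$ is nontrivial on $H^1(-;\mathbb{Z}/2)$.
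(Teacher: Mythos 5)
The paper does not actually prove the Borsuk--Ulam theorem: it quotes it as a known result and refers the reader to \cite{GP} or \cite{Ma}, so there is no in-paper argument to measure yours against. Your proof is correct and is one of the standard algebraic-topology proofs: reduce to the nonexistence of a continuous antipodal map $g:S^N\rightarrow S^{N-1}$, descend to $\bar g:\mathbb{RP}^N\rightarrow\mathbb{RP}^{N-1}$, show that $\bar g^*$ sends the generator of $H^1(\mathbb{RP}^{N-1};\mathbb{Z}/2)$ to the generator of $H^1(\mathbb{RP}^N;\mathbb{Z}/2)$, and contradict the ring structure $H^*(\mathbb{RP}^N;\mathbb{Z}/2)\cong\mathbb{Z}/2[\alpha_N]/(\alpha_N^{N+1})$ with $\alpha_N^N\neq 0$. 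Your justification of the key identification $\bar g^*\alpha_{N-1}=\alpha_N$ is sound, whether phrased via classification of double covers (the pullback of $S^{N-1}\rightarrow\mathbb{RP}^{N-1}$ is the connected, hence nontrivial, cover $S^N\rightarrow\mathbb{RP}^N$) or via the concrete $\pi_1$ computation with a great semicircle. The one loose end is the degenerate case $N=1$, where $\mathbb{RP}^{N-1}$ is a point, $H^1$ of the target vanishes, and the class $\alpha_0$ you pull back does not exist; there the statement is immediate anyway, since a continuous antipodal map from the connected space $S^1$ to the two-point space $S^0$ cannot exist (equivalently, apply the intermediate value theorem to the odd function $F$ along a path from $v$ to $-v$). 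For comparison, the sources the paper cites take different routes --- mod~$2$ degree theory in \cite{GP}, and combinatorial arguments such as Tucker's lemma in \cite{Ma} --- but any complete proof serves the paper's purposes equally well, since Borsuk--Ulam enters only as a black box in the proof of the Stone--Tukey ham sandwich theorem.
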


The reader can find a proof of the Borsuk-Ulam theorem in \cite{GP} or \cite{Ma}.  

We give the proof of Theorem \ref{STham} using the Borsuk-Ulam theorem:

\begin{proof}
Without loss of generality, we can assume that $\Dim V = N+1$, and we can identify $V$ with $\RR^{N+1}$, so that $S^N \subset V \setminus \{ 0 \}$.  We defining a function $F: V \setminus \{ 0 \} \rightarrow \RR^N$ by setting the $j^{th}$ coordinate to

$$ F_j(v) := \int_{ \{ v > 0 \} } W_j - \int_{ \{ v < 0 \} } W_j . $$

It follows immediately that $F(-v) = - F(v)$.  Moreover, if $F(v) = 0$, then $v$ obeys the conclusion of the ham sandwich theorem.  It is also true that the function $F$ is continuous, which we will check below.  Then the Borsuk-Ulam theorem implies that there exists a $v \in S^N \subset V \setminus \{ 0 \}$ so that $F(v) = 0$.

It just remains to check the continuity of the functions $F_j$ on $V \setminus \{ 0 \}$.  This is a measure theory exercise.  Suppose that $v_k \rightarrow v$ in $V \setminus \{ 0 \}$.  Let $A_k \subset \RR^n$ be the set of points where the sign of $v_k$ is different from the sign of $v$.

$$ | F_j(v_k) - F_j (v) | \le \int_{A_k} |W_j|. $$

We know that the functions $v_k \rightarrow v$ pointwise.  Therefore, $\cap_{k_0} \cup_{k > k_0} A_k \subset v^{-1}(0)$.  By the dominated convergence theorem,

$$ \lim_{k_0 \rightarrow \infty} \int_{\cup_{k \ge k_0} A_k } |W_j| \le \int_{Z(f)} |W_j| = 0. $$

This proves that $\lim_{k \rightarrow \infty} |F_j (v_k) - F_j(v)| = 0$, showing that $F_j$ is continuous on $V \setminus \{ 0 \}$.  \end{proof}

Polynomial partitioning is a corollary of the ham sandwich theorem.  It was proven in \cite{GK} in a discrete setting.  Here we give the same argument in a continuous setting.

\begin{theorem} \label{polypart} Suppose that $W \ge 0$ is a (non-zero) $L^1$ function on $\RR^n$.  Then for each $D$ there a non-zero polynomial $P$ of degree at most $D$ so that $\RR^n \setminus Z(P)$ is a union of $\sim D^n$ disjoint open sets $O_i$, and the integrals $\int_{O_i} W$ are all equal.
\end{theorem}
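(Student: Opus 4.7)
The plan is to iterate the polynomial ham sandwich Corollary~\ref{polyham} in a dyadic bisection scheme, analogous to the discrete argument in \cite{GK}. First I would find a polynomial $P_1$ of degree $\lesssim_n 1$ that bisects $W$, so that $\int_{\{P_1>0\}} W = \int_{\{P_1<0\}} W = \frac{1}{2}\int W$. Then at stage $j$, having produced $2^{j-1}$ disjoint open ``sign cells'' on each of which $W$ integrates to $2^{-(j-1)}\int W$, I would apply Corollary~\ref{polyham} to the $2^{j-1}$ functions $W \chi_{U}$ (one for each sign cell $U$) to obtain a polynomial $P_j$ of degree at most $C_n (2^{j-1})^{1/n}$ that simultaneously bisects the $W$-mass of every current cell. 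After $k$ stages, set $P := P_1 P_2 \cdots P_k$, and take the $O_i$ to be the $2^k$ sets of the form $\{P_1 \sigma_1 0,\, P_2 \sigma_2 0,\, \ldots,\, P_k \sigma_k 0\}$ with $\sigma_j \in \{>,<\}$. These are disjoint open subsets whose union is exactly $\RR^n \setminus Z(P)$, and by construction each carries $W$-mass $2^{-k} \int W$.

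The degree bookkeeping is the routine part: by construction
\[
\Deg P \;=\; \sum_{j=1}^{k} \Deg P_j \;\le\; C_n \sum_{j=1}^{k} 2^{(j-1)/n} \;\le\; C_n' \, 2^{k/n},
\]
where the geometric series is dominated by its last term. Thus, given $D$, I would choose $k$ to be the largest integer with $C_n' 2^{k/n} \le D$; this yields $\Deg P \le D$ and at the same time $2^k \gtrsim_n D^n$, providing the required $\sim_n D^n$ cells. Conversely $2^k \le 2 \cdot (D/C_n')^n$, so the number of cells is also $\lesssim_n D^n$.

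The main conceptual point to verify carefully is that a single polynomial ham sandwich step really can simultaneously bisect arbitrarily many of the current cells, since the degree bound in Corollary~\ref{polyham} grows only like $N^{1/n}$ rather than linearly in $N$; this is exactly what makes the geometric series in the degree count sum to the last term. I would also note the harmless subtlety that the ``cells'' $O_i$ in the statement are the sign cells of $P$, which need not coincide with connected components of $\RR^n \setminus Z(P)$: a single sign cell may a priori break into several components, but this only strengthens the conclusion once we recall (as is used later in the paper via Milnor's theorem) that the total number of components is in any case $\lesssim_n D^n$. If one insists that the $O_i$ be connected components with exactly equal $W$-integral, the statement should be read as asserting that sign cells with the equal-integral property exist, which is precisely what the iterated ham sandwich construction produces.
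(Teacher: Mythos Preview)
Your proposal is correct and follows essentially the same route as the paper's own proof: iterate the polynomial ham sandwich (Corollary~\ref{polyham}) dyadically, take $P = \prod_j P_j$, and choose the number of stages so that the geometric series $\sum_j 2^{j/n}$ for the degree lands below $D$ while producing $\sim_n D^n$ sign cells. Your closing remark distinguishing sign cells from connected components is a nice clarification that the paper does not spell out, but it does not change the argument.
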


\begin{proof} Using Corollary \ref{polyham}, we construct a polynomial $P_1$ so that

$$ \int_{\{ P_1 > 0 \} } W = \int_{ \{ P_1 < 0 \} } W = 2^{-1} \int W. $$

Next we let $W_+ = \chi_{ \{ P_1 > 0 \} } W$ and $W_- = \chi_{ \{ P_1 < 0 \} } W$, and we Corollary \ref{polyham} to find a polynomial $P_2$ so that for $j = + $ or $-$, 

$$ \int_{\{ P_2 > 0 \} } W_j = \int_{ \{ P_2 < 0 \} } W_j = 2^{-2} \int W. $$

We have now cut $\RR^n$ into four cells determined by the signs of $P_1,$ and $P_2$.  The integral of $W$ on each cell is equal to $2^{-2} \int W$.  We next construct a polynomial $P_3$ that bisects $W$ restricted to each of these four cells.  

Continuing inductively, we construct polynomials $P_1, ..., P_s$, for a number $s$ that we choose below.  We let $P = \prod P_k$.  The sign conditions of the polynomials cut $\RR^n \setminus Z(P)$ into $2^s$ cells, $O_i$.  The integral of $W$ on each of these cells is equal to $2^{-s} \int W$.  Corollary \ref{polyham} tells us that the degree of $P_k$ is $\lesssim_n 2^{k/n}$.  Therefore, the degree of $P$ is $\le C_n 2^{s/n}$.  Now we choose $s$ so that $C_n 2^{s/n} \in [D/2, D]$, guaranteeing that the degree of $P$ is at most $D$.   The number of cells $O_i$ is $2^s \sim_n D^n$.
\end{proof}

We say that a polynomial $P$ is non-singular if $\nabla P (x) \not= 0$ for each point in $Z(P)$.  If $P$ is non-singular, then it follows that $Z(P)$ is a smooth hypersurface.  For technical reasons, it is helpful in our arguments later to use non-singular polynomials.  We next prove versions of the ham sandwich theorem and the partitioning theorem with non-singular polynomials.  We recall the standard fact that non-singular polynomials are dense.  More precisely, if $\Poly_D(\RR^n)$ denotes the vector space of polynomials on $\RR^n$ of degree at most $D$, then

\begin{lemma} Non-singular polynomials are dense in $\Poly_D(\RR^n)$ for any $D, n$.  Moreover, the singular polynomials have measure zero.
\end{lemma}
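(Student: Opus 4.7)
My plan is to reduce both the density and the measure-zero claims to Sard's theorem applied to polynomials viewed as smooth maps $\RR^n \to \RR$. The key observation is that $P \in \Poly_D(\RR^n)$ is singular in the sense of the lemma exactly when $0$ is a critical value of $P$: the condition $\nabla P(x) = 0$ says $x$ is a critical point, and $P(x) = 0$ says the corresponding critical value is $0$. Since polynomials are $C^\infty$, Sard's theorem gives that for each fixed $P$ the set $C(P) \subset \RR$ of critical values has Lebesgue measure zero.

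Density then follows in one line. Given $P \in \Poly_D(\RR^n)$ and $\eps > 0$, pick any $t \in (-\eps, \eps) \setminus C(P)$ (possible because $C(P)$ is null); then $P - t$ has no critical point at height $0$, hence is non-singular, and $P - t$ is within $\eps$ of $P$ in any norm on the finite-dimensional space $\Poly_D(\RR^n)$.

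For the measure-zero statement, I would use Fubini. Decompose $\Poly_D(\RR^n) = V_0 \oplus \RR \cdot 1$, where $V_0$ is the subspace of polynomials with vanishing constant term, and write each $P = Q + t$ with $Q \in V_0$ and $t \in \RR$. Since $P$ and $Q$ share the same gradient, $P$ is singular iff there is some $x$ with $\nabla Q(x) = 0$ and $Q(x) = -t$, that is, iff $-t \in C(Q)$. Sard says that this slice has measure zero in $\RR$ for every $Q$, and Fubini then gives that the singular locus has measure zero in $\Poly_D(\RR^n)$. To invoke Fubini I need measurability of the singular locus; this comes for free by writing it as $\bigcup_{k=1}^{\infty} S_k$, where $S_k$ is the set of $P$ admitting a singular zero inside the closed ball $\overline{B(0,k)}$. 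Each $S_k$ is closed by compactness of $\overline{B(0,k)}$ together with joint continuity of $(P,x) \mapsto (P(x), \nabla P(x))$, so the singular set is $F_\sigma$ and in particular Borel.

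I do not anticipate any substantial obstacle; the only point requiring any care is the measurability step that feeds Fubini, handled above. An alternative route, avoiding Sard altogether, would be to consider the incidence variety $\Sigma^\ast = \{(P,x) \in \Poly_D(\RR^n) \times \RR^n : P(x) = 0,\ \nabla P(x) = 0\}$ and observe that for each fixed $x = a$ the $n+1$ linear functionals $P \mapsto P(a)$ and $P \mapsto \partial_i P(a)$ on $\Poly_D(\RR^n)$ are independent (the polynomials $1, x_1 - a_1, \ldots, x_n - a_n$ witness this), so $\Sigma^\ast$ is a smooth submanifold of codimension $n+1$, hence of dimension $N - 1$ with $N = \Dim \Poly_D(\RR^n)$; the singular locus is the projection of $\Sigma^\ast$ to $\Poly_D(\RR^n)$, and the image of an $(N-1)$-dimensional manifold under a smooth map into $\RR^N$ has measure zero, from which density also follows since the complement of a measure-zero set in $\RR^N$ is dense.
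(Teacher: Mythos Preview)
Your proof is correct and uses the same essential ingredients as the paper---Sard's theorem and Fubini---but organizes them a bit differently. The paper applies Sard once to the universal evaluation map $E:\RR^n\times\Poly_D(\RR^n)\to\RR\times\Poly_D(\RR^n)$, $E(x,Q)=(Q(x),Q)$, observes that $(h,Q)$ is a regular value exactly when $Q-h$ is non-singular, and then uses Fubini on the product $\RR\times\Poly_D(\RR^n)$ to conclude that the singular polynomials form a null set. You instead apply Sard fiberwise to each individual polynomial $P:\RR^n\to\RR$, obtaining that its critical-value set $C(P)$ is null, and then integrate over $Q\in V_0$ via Fubini along the splitting $\Poly_D(\RR^n)=V_0\oplus\RR\cdot 1$; your density argument is handled separately and is particularly clean. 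The paper's route packages everything into a single Sard application at the cost of working in a higher-dimensional domain; your route stays in the minimal setting and is more explicit (you also verify measurability before invoking Fubini, which the paper leaves implicit). Your alternative incidence-variety argument is a genuinely different, more geometric route that the paper does not take.
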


\begin{proof} Consider the map $E: \RR^n \times \Poly_D(\RR^n) \rightarrow \RR \times \Poly_D(\RR^n)$, given by $E(x, Q) = (Q(x), Q)$.  The map $E$ is $C^\infty$ smooth, and so by Sard's theorem, the critical values of $E$ have measure zero.  

Suppose that $(h, Q)$ is a regular value of $E$.  Then we claim that $Q - h$ is a non-singular polynomial.  Note that $(Q-h)(x) = 0$ if and only if $(x, Q) \in E^{-1}(h, Q)$.  Since $(h, Q)$ is a regular value, we know that $dE_{x, Q}$ is surjective.  But $dE_{x, Q} = (\nabla Q, id)$, where $id: \Poly_D(\RR^n) \rightarrow \Poly_D(\RR^n)$ is the identity map.  Therefore, if $(Q-h)(x) = 0$, then $\nabla (Q-h)(x) = \nabla Q (x) \not= 0 $.  

We have seen that for almost every $(h, Q)$, $Q-h$ is non-singular.  By Fubini's theorem it follows that the set of singular polynomials has measure zero in $\Poly_D(\RR^n)$, and so the non-singular polynomials are dense. 
\end{proof}

Using the density of non-singular polynomials, we can prove a version of the polynomial ham sandwich theorem with non-singular polynomials, weakening perfect bisections to approximate bisections.

\begin{cor} \label{polyhamns} Suppose that $W_1, ..., W_N \ge 0$ are non-zero functions in $L^1(\RR^n)$.  Then for any $\delta > 0$, there is a non-singular polynomial $P$ so that for each $W_j$

$$(1 - \delta) \int_{ \{ P < 0 \} } W_j \le \int_{ \{ P > 0 \} }W_j \le (1 + \delta) \int_{ \{ P < 0 \}} W_j. $$

\end{cor}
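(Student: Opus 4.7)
The plan is to start from the exact polynomial ham sandwich theorem (Corollary \ref{polyham}) and then perturb the bisecting polynomial within its own degree class into a non-singular one, paying only an arbitrarily small price in the bisection. First I would apply Corollary \ref{polyham} to $W_1,\ldots,W_N$ to obtain a non-zero $P_0 \in \Poly_D(\RR^n)$ with $D \le C_n N^{1/n}$ satisfying $\int_{\{P_0>0\}} W_j = \int_{\{P_0<0\}} W_j = \tfrac{1}{2}\int W_j$ for every $j$.

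Next, by the density lemma just proved, I would choose a sequence $P_k \in \Poly_D(\RR^n)$ of non-singular polynomials whose coefficients converge to those of $P_0$. Since polynomials of degree at most $D$ are parametrized by finitely many coefficients, convergence in the coefficient topology gives pointwise convergence $P_k(x) \to P_0(x)$ for every $x \in \RR^n$. The key step is then a continuity argument for the sign-indicator integrals: for any $x$ with $P_0(x) \neq 0$, the sign of $P_k(x)$ eventually agrees with the sign of $P_0(x)$, so $\chi_{\{P_k>0\}}(x) \to \chi_{\{P_0>0\}}(x)$ everywhere off $Z(P_0)$. Since $P_0$ is a non-zero polynomial, $Z(P_0)$ has Lebesgue measure zero, so this convergence is pointwise almost everywhere; and since $\chi_{\{P_k>0\}} W_j \le W_j \in L^1(\RR^n)$, the dominated convergence theorem yields $\int_{\{P_k>0\}} W_j \to \tfrac{1}{2}\int W_j$, and analogously for $\{P_k<0\}$, for each $j$.

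Consequently, for $k$ large enough (depending on $\delta$ and on the finite collection $W_1,\ldots,W_N$), each of $\int_{\{P_k>0\}} W_j$ and $\int_{\{P_k<0\}} W_j$ lies within a factor $1\pm\delta$ of $\tfrac12 \int W_j$, and I would set $P = P_k$ for any such $k$. By construction $P$ is non-singular and has degree at most $D$, giving the conclusion. I do not expect a serious obstacle here: the only technical point is to keep the approximating sequence inside $\Poly_D(\RR^n)$ so the degree bound from Corollary \ref{polyham} is preserved, and the rest reduces to the elementary observation that $Z(P_0)$ is Lebesgue-null combined with dominated convergence.
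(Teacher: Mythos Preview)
Your proposal is correct and follows essentially the same approach as the paper: take an exact bisecting polynomial $P_0$ from Corollary~\ref{polyham}, approximate it by non-singular polynomials using the density lemma, and use dominated convergence (which the paper refers to as ``the continuity argument in the proof of Theorem~\ref{STham}'') together with the fact that $Z(P_0)$ is Lebesgue-null to pass the approximate bisection to some $P_k$. Your additional remark about keeping $P_k$ inside $\Poly_D(\RR^n)$ is not required by the statement of the corollary, but it is exactly what is needed for the subsequent partitioning result (Corollary~\ref{partitns}), so it is a useful observation.
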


\begin{proof} Let $P_0$ be a non-zero polynomial with $ \int_{\{ P_0 > 0 \} } W_j = \int_{ \{ P_0 < 0 \} } W_j. $  Then let $P_k$ be a sequence of non-singular polynomials approaching $P_0$.  By the continuity argument in the proof of Theorem \ref{STham}, we have $\lim_{k \rightarrow \infty} \int_{ \{ P_k > 0 \} } W_j = \int_{ \{ P > 0 \} } W_j$, and so for large $k$, $P_k$ obeys the desired inequality.
\end{proof}

Finally, using Corollary \ref{polyhamns} in place of Corollary \ref{polyham} in the proof of Theorem \ref{polypart}, we get a partitioning result involving non-singular polynomials.

\begin{cor} \label{partitns} Let $W$ be a non-negative $L^1$ function on $\RR^n$.  Then for any $D$, there is a non-zero polynomial $P$ of degree at most $D$ so that $\RR^n \setminus Z(P)$ is a disjoint union of $\sim D^n$ cells $O_i$, and the integrals $\int_{O_i} W$ agree up to a factor of 2.  Moreover, the polynomial $P$ is a product of non-singular polynomials.  
\end{cor}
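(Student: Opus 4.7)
The plan is to mimic the proof of Theorem \ref{polypart} verbatim, replacing each invocation of the polynomial ham sandwich theorem (Corollary \ref{polyham}) by its non-singular approximate version (Corollary \ref{polyhamns}). Since every bisecting polynomial produced along the way is then non-singular, the resulting product $P = P_1 P_2 \cdots P_s$ will automatically be a product of non-singular polynomials, as required. The only cost of this substitution is that each bisection becomes approximate, so the essential task is to check that the cumulative multiplicative error after $s$ iterations can be kept below a factor of $2$.

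Concretely, first choose $s$ so that $C_n 2^{s/n} \le D$ and $2^s \sim_n D^n$, exactly as in the proof of Theorem \ref{polypart}. Next fix a small parameter $\delta > 0$, to be chosen below depending only on $s$. Inductively construct $P_1, \ldots, P_s$ as follows: at step $k$, the polynomials $P_1, \ldots, P_{k-1}$ partition $\RR^n$ into $2^{k-1}$ sign cells; apply Corollary \ref{polyhamns} with parameter $\delta$ to the $2^{k-1}$ functions obtained by restricting $W$ to these cells. This yields a non-singular polynomial $P_k$ of degree $\lesssim_n 2^{(k-1)/n}$ whose sign splits the $W$-integral on each current cell into two pieces whose ratio lies in $[(1-\delta)/(1+\delta),\,(1+\delta)/(1-\delta)]$. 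After $s$ steps each cell $O_i$ of $\RR^n \setminus Z(P)$ satisfies
\begin{equation*}
\Bigl(\tfrac{1-\delta}{1+\delta}\Bigr)^{s} \cdot 2^{-s} \int W \;\le\; \int_{O_i} W \;\le\; \Bigl(\tfrac{1+\delta}{1-\delta}\Bigr)^{s} \cdot 2^{-s} \int W ,
\end{equation*}
so choosing $\delta$ small enough that $\bigl((1+\delta)/(1-\delta)\bigr)^{2s} \le 2$ -- possible because $\delta$ may depend on $s$ and $(1+\delta)/(1-\delta) = 1 + O(\delta)$ -- forces any two cell integrals to agree to within a factor of $2$. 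The degree bound $\deg P \le C_n 2^{s/n} \le D$ and the cell count $2^s \sim_n D^n$ are then identical to those in Theorem \ref{polypart}.

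The only real obstacle is this error-propagation bookkeeping, and it is benign: Corollary \ref{polyhamns} is available for \emph{every} positive $\delta$, so there is no tension between needing $\delta$ to shrink as $s$ (and hence $D$) grows and being able to carry out each approximate bisection. The induction therefore closes exactly as in the perfect-bisection setting, and the corollary follows.
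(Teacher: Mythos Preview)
Your proof is correct and is exactly the approach the paper indicates: replace Corollary~\ref{polyham} by Corollary~\ref{polyhamns} in the proof of Theorem~\ref{polypart}, and track the multiplicative error. The paper states this in one sentence without writing out the error bookkeeping; you have simply filled in those details, and done so correctly.
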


\section{Preliminaries}

\subsection{Statement of results}

We will work with surfaces $S$ that are nearly paraboloids.  The basic example is the truncated paraboloid defined by the equation $\omega_3 = \omega_1^2 + \omega_2^2$, $(\omega_1, \omega_2) \in B^2_1(0)$.  The reader may want to focus on this example throughout.
Suppose that $S \subset \RR^3$ is a smooth compact surface given as the graph of a function $h: B^2_1(0) \rightarrow \RR$ which satisfies the following conditions for some large $L$:

\begin{cond} \label{CondSnice}

\begin{enumerate}

\item $0 < 1/2 \le \partial^2 h \le 2$.

\item $0 = h(0) = \partial h(0)$.

\item $h$ is $C^L$, and

\item for $3 \le l \le L$, $\| \partial^l h \|_{C^0} \le 10^{-9}$.    

\end{enumerate}

\end{cond} 

\begin{theorem} \label{mainthm}  For any $\eps > 0$, there is some $L$ so that if $S$ obeys Conditions \ref{CondSnice} with $L$ derivatives, then for any radius $R$, the extension operator $E_S$ obeys the inequality

$$ \| E_S f \|_{L^{3.25}(B_R)} \le C_\eps R^\eps \| f \|_\infty. $$

\end{theorem}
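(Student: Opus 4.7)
The plan is to reduce Theorem \ref{mainthm} to the broad $L^{3.25}$-bound of Theorem \ref{introbroad12/13}, and then prove that broad bound by induction using the polynomial partitioning result, Corollary \ref{partitns}. For the reduction, I would combine the broad-narrow inequality (\ref{broadnarrow}) with parabolic rescaling: the narrow pieces $E_S f_\tau$ are, after rescaling, extension problems over balls of smaller radius for surfaces still satisfying Conditions \ref{CondSnice} (the bounds $\|\partial^l h\|_{C^0} \le 10^{-9}$ for $l \ge 3$ are preserved, or even improved, under parabolic rescaling), so induction on $R$ handles the narrow contribution while the broad part is bounded by (\ref{broadlinfty}), an immediate corollary of the broad theorem.

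For the main step, I would prove Theorem \ref{introbroad12/13} by simultaneous induction on $R$ and on $\int_S |f|^2$. Fix a large degree $D = D(\eps)$, apply the wave-packet decomposition $f = \sum_T f_T$ at scale $R^{-1/2}$, and apply Corollary \ref{partitns} to the nonnegative function $\chi_{B_R}(\Br E_S f)^{3.25}$ to produce a non-singular partitioning polynomial $P$ of degree at most $D$. Let $W$ be the $R^{1/2}$-neighborhood of $Z(P)$ and let $O_i' := (O_i \cap B_R) \setminus W$ for each of the $\sim D^3$ cells $O_i$ of $\RR^3 \setminus Z(P)$, and split the integral into a cellular piece $\sum_i \int_{O_i'}$ and a wall piece $\int_W$. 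In the cellular case, since the core of each tube $T$ meets $Z(P)$ in at most $D$ points it enters at most $D+1$ of the sets $O_i'$; setting $f_i = \sum_{T : T \cap O_i' \neq \emptyset} f_T$, essential orthogonality of the $f_T$ gives $\sum_i \int_S |f_i|^2 \lesssim D \int_S |f|^2$ and $\max_\theta \oint_\theta |f_i|^2 \lesssim 1$, so some cell satisfies both $\int_S |f_i|^2 \lesssim D^{-2} \int_S |f|^2$ and $\int_{O_i'} (\Br E_S f_i)^{3.25} \sim D^{-3} \int_{B_R}(\Br E_S f)^{3.25}$. Applying the induction hypothesis to $f_i$ then produces the gain $C D^{-2\eps}$, absorbed by taking $D$ large, exactly as in the model incidence argument of Theorem \ref{incgeomintro}.

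For the wall case, subdivide $B_R$ into balls $B_j$ of radius $R^{1-\delta}$ for a small $\delta = \delta(\eps, D)$ and decompose the wave packets meeting $W \cap B_j$ into a transverse family $\TT_{j,trans}$ and a tangential family $\TT_{j,tang}$. The transverse tubes are handled by the induction-on-scales mechanism of Wolff and Tao: the geometric input is that each tube $T$ belongs to $\TT_{j,trans}$ for at most $\Poly(D)$ values of $j$, so applying the induction hypothesis on each $B_j$ separately and summing produces a factor $\Poly(D) \cdot R^{-\delta \eps}$, which is $\le 1$ once $R$ is large. For tangential tubes, the broadness of the integrand allows reduction to a bilinear expression over two $K^{-1}$-separated caps $\tau_1, \tau_2$; subdividing $W \cap B_j$ into $R^{1/2}$-cubes $Q$ on each of which $\TT_{j,tang}$ lies in an $O(R^{-1/2})$ neighborhood of a single tangent plane to $Z(P)$, I would apply C\'ordoba's $L^4$ argument from \cite{C} on each $Q$ and interpolate with the trivial Plancherel bound at $p=2$ to reach $p=3.25$. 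Using that $\TT_{j,tang}$ draws directions from only $\sim R^{1/2}$ of the $\sim R$ caps $\theta$, the resulting estimate matches the planar example directly, so no further induction is required for this piece.

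The main obstacle is the geometric input for the wall case: one must define ``transverse'' and ``tangential'' in such a way that (a) each tube is transverse in at most $\Poly(D)$ of the $B_j$, and (b) on each $R^{1/2}$-cube $Q$ the tangential tubes really do lie in a thin neighborhood of a single plane. These are quantitative statements about how $O(R^{1/2})$-tubes interact with a real algebraic surface of degree $D$, and they require control of the second fundamental form of $Z(P)$ in terms of $D$; this is precisely why Corollary \ref{partitns} was strengthened to produce a product of \emph{non-singular} polynomials, so that $Z(P)$ is a smooth hypersurface on which differential geometry can be done. Once those geometric lemmas are in place, the cellular/transverse/tangential trichotomy closes the induction in the same pattern as the incidence model.
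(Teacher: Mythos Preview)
Your proposal is essentially the paper's own argument: the reduction to the broad estimate via the broad--narrow inequality and parabolic rescaling (Lemma \ref{parabrescal}), followed by the three-case induction (cellular / transverse / tangential) driven by Corollary \ref{partitns}, with the tangential case handled by C\'ordoba's $L^4$ argument and the geometric input supplied by Lemmas \ref{transbound} and \ref{tangbound}.

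The one place where your sketch would not close as written is the choice ``fix a large degree $D = D(\eps)$''.  Each application of the inductive hypothesis (cellular via Lemma \ref{declemma1}, transverse via Lemma \ref{declemma2}) replaces the broadness parameter $\alpha$ by $2\alpha$ and enlarges the caps $\tau$ slightly, so their overlap multiplicity $\mu$ also grows; the statement you are inducting on must therefore allow $\alpha$ and $\mu$ to drift.  The paper handles this by proving the more flexible Theorem \ref{maintech}, which carries an extra factor $R^{\delta_{trans}\log(K^{\eps}\alpha\mu)}$ on the right-hand side, and then chooses $D = R^{\delta_{deg}}$ with $\delta_{trans} \ll \delta_{deg} \ll \delta \ll \eps$.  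With $D$ a constant, the cellular gain $C D^{-2\eps}$ is a fixed number $<1$, which cannot absorb the loss $R^{C\delta_{trans}}$ coming from the $\alpha,\mu$ drift; taking $D$ to be a small power of $R$ is what makes $D^{-2\eps} = R^{-2\eps\delta_{deg}}$ dominate that loss.  Apart from this bookkeeping refinement (and the companion choice of tube radius $R^{(1/2)+\delta}$ rather than $R^{1/2}$), your outline matches the paper.
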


By Tao's $\eps$-removal theorem \cite{T1}, we get the following corollary:

\begin{cor} \label{maincor} If $S$ obeys Conditions \ref{CondSnice}, then for all $p > 3.25$,

$$ \| E_S f \|_{L^p(\RR^3)} \le C(p) \| f \|_\infty. $$

\end{cor}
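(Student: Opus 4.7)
The plan is a direct application of Tao's $\eps$-removal theorem from \cite{T1}, which is the tool engineered precisely to convert local $L^p$ estimates with small polynomial $R^\eps$ loss into genuine global $L^{p'}$ estimates for any fixed $p' > p$. The input will be Theorem \ref{mainthm}, which supplies, for every $\eps > 0$, the local bound
$$ \| E_S f \|_{L^{3.25}(B_R)} \le C_\eps R^\eps \| f \|_\infty $$
with $C_\eps$ independent of $R$. The target is the global bound $\| E_S f \|_{L^p(\RR^3)} \le C(p) \| f \|_\infty$ for each fixed $p > 3.25$.

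Given such a $p$, I would first fix $\eps = \eps(p)>0$ small enough, in terms of the gap $p - 3.25$, so that the quantitative hypothesis of the $\eps$-removal theorem in \cite{T1} is met at the endpoint exponent $p_0 = 3.25$. The removal theorem then produces a constant $C(p)$ and the global inequality $\|E_S f\|_{L^p(\RR^3)} \le C(p)\|f\|_\infty$. Internally, the mechanism of \cite{T1} decomposes $f$ into pieces localized in small separated caps on $S$, exploits almost-orthogonality of the outputs on well-separated balls together with the parabolic scaling symmetries of $E_S$, and uses a Marcinkiewicz-style interpolation against the trivial $L^\infty \to L^\infty$ bound on unit balls. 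These ingredients depend only on the compactness of $S$ and the positivity of its second fundamental form, both of which are guaranteed by Conditions \ref{CondSnice}.

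The only real point to check is a compatibility of formulations: Tao's theorem is stated for extension operators on compact curved hypersurfaces with an $R^\eps$ loss in the local estimate, and it allows the right-hand side to be $\|f\|_{L^q}$ for any $q \in [1,\infty]$. Our case $q = \infty$ is the most convenient, since then passing between caps is trivial ($\|f\chi_\tau\|_\infty \le \|f\|_\infty$) and no orthogonality loss in $q$ has to be tracked. I do not expect any genuine obstacle here beyond this bookkeeping, because the surfaces satisfying Conditions \ref{CondSnice} are small smooth perturbations of the paraboloid and fall squarely within the scope of \cite{T1}. Once the hypotheses are matched, Corollary \ref{maincor} follows by direct citation.
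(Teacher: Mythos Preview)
Your proposal is correct and is exactly the paper's approach: the corollary is obtained by a direct application of Tao's $\eps$-removal theorem \cite{T1} to the local estimate of Theorem \ref{mainthm}. The paper gives no further detail beyond the citation, so your additional remarks about the mechanism of \cite{T1} are accurate elaboration rather than a different route.
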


A little later, at the end of Subsection \ref{parscal}, we will see that 
the case of a general compact surface with positive second fundamental form can be reduced to the case of a surface obeying Conditions \ref{CondSnice}, so that Theorem \ref{mainintro} follows quickly from Corollary \ref{maincor}.  

In coordinates, we have $\omega_3 = h(\omega_1, \omega_2) = h(\vec \omega)$.  We write $\vec \omega \in \RR^2$ for the first two coordinates of $\omega \in \RR^3$.  

\subsection{Broad points}

Let $S$ be as above. We divide $S$ into $\sim K^2$ caps $\tau$ of diameter $\sim K^{-1}$.  Let $f_\tau$ denote the restriction of $f$ to $\tau$.  

For $\alpha \in (0,1)$, we say that $x$ is $\alpha$-broad for $Ef$ if:

$$ \max_{\tau} | Ef_\tau (x)| \le \alpha | E{f}(x)|. $$

\noindent We define $\BrEf (x)$ to be $|Ef(x)|$ if $x$ is $\alpha$-broad for $Ef$ and zero otherwise.  We remark that the definition of $\BrEf(x)$ depends on $K$ and on the choice of the caps $\tau$.  Roughly speaking, if a point $x$ is not broad, then $|Ef(x)|$ is comparable to $|Ef_\tau(x)|$ for some cap $\tau$, and we can deal with these points separately, by some induction on the size of caps.

We will prove the following estimate about $L^p$ norms of the broad part of $Ef$.

\begin{theorem} \label{broad12/13} For any $\epsilon > 0$, there exists $K = K(\eps), L = L(\eps)$ so that if
$S$ obeys conditions \ref{CondSnice} with $L$ derivatives, then for any radius $R$, 

$$ \| \Br_{K^{- \eps}} Ef \|_{L^{3.25} (B_R)} \le C_\eps R^\eps \| f \|_2^{12/13} \| f \|_\infty^{1/13}. $$

\noindent Also, $\lim_{\eps \rightarrow 0} K(\eps) = + \infty$.  

\end{theorem}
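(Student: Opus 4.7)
The plan is to reduce Theorem \ref{broad12/13} to the inductive-friendly variant Theorem \ref{introbroad12/13} and then to establish the latter by a double induction on $R$ and on $\int_S |f|^2$, driven at each stage by polynomial partitioning. For the reduction I would normalize $g := f/\|f\|_\infty$, so that $|g|\le 1$ pointwise and therefore $\oint_\theta |g|^2 \le 1$ holds automatically; applying Theorem \ref{introbroad12/13} to $g$ and unwinding $|\Br Ef| = \|f\|_\infty\,|\Br Eg|$ yields an inequality with exponents $(3+2\eps)/3.25$ on $\|f\|_2$ and $(0.25-2\eps)/3.25$ on $\|f\|_\infty$. Using the arithmetic identities $3/3.25 = 12/13$ and $0.25/3.25 = 1/13$, together with the trivial bound $\|f\|_2 \le |S|^{1/2}\|f\|_\infty$ from compactness of $S$, the small $\eps$-loss in the exponents is absorbed into a redefinition of $\eps$ in the factor $R^\eps$, which produces exactly the stated inequality.

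For the main induction I would apply Corollary \ref{partitns} to $W := \chi_{B_R} |\Br Ef|^{3.25}$ at a degree $D = D(\eps)$ chosen at the end, obtaining a non-singular polynomial $P$ whose complement decomposes into $\sim D^3$ cells $O_i$ on each of which the $L^{3.25}$-mass of $\Br Ef$ is essentially equal. Fattening $Z(P)$ to its tube-radius neighborhood gives a wall $W$ and truncated cells $O_i' := (O_i \cap B_R) \setminus W$; the decisive algebraic fact is that a tube's core line meets $Z(P)$ in at most $D$ points unless it lies inside, so each tube enters at most $D+1$ of the $O_i'$. I then split $\int_{B_R} |\Br Ef|^{3.25}$ into a cellular piece and a wall piece and analyze them separately.

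In the cellular case the $\sim D^3$ equal contributions let me pick a typical $i$ for which the wave-packet truncation $f_i := \sum_{T \in \TT_i} f_T$ satisfies $\int_S |f_i|^2 \lesssim D^{-2}\int_S |f|^2$ and $\max_\theta \oint_\theta |f_i|^2 \le \max_\theta \oint_\theta |f|^2$ by essential orthogonality. Invoking the inductive hypothesis on the smaller $L^2$ mass and summing in $i$ produces a net factor of $D^3 \cdot D^{-2((3/2)+\eps)} = D^{-2\eps}$, which closes for $D$ large in terms of $\eps$, exactly as in Theorem \ref{incgeomintro}. In the wall case I subdivide $B_R$ into $\sim R^{3\delta}$ balls $B_j$ of radius $R^{1-\delta}$ with $\delta$ so small that $R^\delta \gg \Poly(D)$, and split the tubes meeting $W \cap B_j$ into a transverse family $\TT_{j,\mathrm{trans}}$ and a tangential family $\TT_{j,\mathrm{tang}}$, whose precise definitions will be calibrated in the body of the paper. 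For the transverse tubes, the geometric lemmas of Section 4 show that each tube belongs to only $\Poly(D)$ of the sets $\TT_{j,\mathrm{trans}}$; induction on the smaller radius $R^{1-\delta}$ then closes, with a gain of $R^{-\delta\eps}$ dominating the $\Poly(D)$ loss.

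The main obstacle is the tangential case, which is not amenable to induction because the tangential tubes over $W \cap B_j$ come from only $\sim R^{1/2}$ of the $R$ caps $\theta$. Here I would exploit the broadness hypothesis to reduce the $L^{3.25}$ bound to a bilinear quantity of the form $\int_{W \cap B_j} |E f_{\tau_1, j, \mathrm{tang}}|^{p/2} |E f_{\tau_2, j, \mathrm{tang}}|^{p/2}$ summed over $K^{-1}$-separated caps $\tau_1, \tau_2$, prove it at the endpoints $p=2$ (Plancherel) and $p=4$, and interpolate at $p = 3.25$. For $p=4$ I would slice $W \cap B_j$ into $R^{1/2}$-cubes $Q$ on each of which the tangential tubes cluster in an $R^{-1/2}$-neighborhood of a single tangent plane of $Z(P)$, reducing to a planar problem handled by C\'ordoba's $L^4$ square-root cancellation argument from \cite{C}. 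Summing over cubes and balls, and using that only $\sim R^{1/2}$ caps contribute tangentially, yields a bound matching the planar example, so this case closes without any induction loss. The delicate point will be calibrating the tangent/transverse dichotomy, the inner radius $R^{1-\delta}$, and the degree $D$ so that the algebraic tube-counting lemma, the coplanarity of tangential tubes at scale $R^{1/2}$, and the bilinear broad reduction all interlock consistently.
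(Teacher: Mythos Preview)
Your overall architecture matches the paper's: reduce to an $L^2$-averaged inductive statement, apply polynomial partitioning, and split into cellular, transverse, and tangential contributions, handling the last via broadness-to-bilinear reduction plus C\'ordoba's $L^4$ argument. The reduction from Theorem \ref{broad12/13} to the inductive statement is exactly as you describe.

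There is, however, a genuine gap in the formulation of your inductive hypothesis. When you pass from $\Br_\alpha Ef$ on $O_i'$ to $\Br Ef_i$ (and likewise in the wall case), the broadness parameter cannot stay fixed: one only gets $\Br_\alpha Ef(x) \lesssim \Br_{2\alpha} Ef_i(x) + \text{error}$, because the functions $f_{\tau,i}$ are perturbations of $f_\tau$ and the support caps enlarge slightly. So each inductive step doubles $\alpha$ and increases the cap multiplicity $\mu$. Your Theorem \ref{introbroad12/13} fixes $\alpha \sim K^{-\eps}$ once and for all, so it is not self-improving under this drift. The paper's actual inductive statement, Theorem \ref{maintech}, explicitly carries variable $\alpha$ and $\mu$ and compensates with an extra factor $R^{\delta_{trans}\log(K^\eps \alpha\mu)}$ on the right-hand side. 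This factor absorbs the doubling at each step, but it forces the partitioning degree to grow with $R$ (the paper takes $D = R^{\eps^4}$, not $D = D(\eps)$ constant), since in the cellular case one now needs $D^{-2\eps}$ to beat a small positive power of $R$, not merely an absolute constant. With $D$ constant as you propose, the cellular induction would not close once the $\alpha$-tracking term is included.

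A second, related point you have glossed over: in the wall case, broadness does not pass cleanly from $Ef$ to $Ef_{j,\mathrm{trans}}$, because some caps $\tau$ may have large tangential contribution at $x$. The paper's Lemma \ref{declemma2} handles this by summing over all subsets $I$ of the set of caps and working with $Ef_{I,j,\mathrm{trans}}$; the $2^{O(K^2)}$ subsets are harmless since $K = K(\eps)$. Your sketch treats the transverse piece as a single function, which is not quite enough.
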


We can deduce Theorem \ref{mainthm} from Theorem \ref{broad12/13} using a parabolic scaling argument from \cite{BG} that we explain in the next subsection.

\subsection{Parabolic scaling}  \label{parscal}

Suppose that $B^2_r(\vec \omega_0) \subset B^2_1$.  We let $S_0 \subset S$ be the graph of $h$ over $B^2_r(\omega_0)$.  We can reduce the behavior of the operator $E_{S_0}$ on $B_R$ to the behavior of $E_{S_1}$ on a smaller ball, for a surface $S_1$ which is similar to the original $S$.  If $S$ is a truncated paraboloid $\omega_3 = |\vec \omega|^2$, then $S_1$ will be a truncated paraboloid as well.  This argument involves a change of coordinates which is essentially a parabolic rescaling.

We describe this change of coordinates.  First we define $\tilde h$ to be $h$ minus its first-order Taylor expansion at $\vec \omega_0$:

\begin{equation} \label{tildeh}
 \tilde h(\vec \omega) = h(\vec \omega) - (\vec \omega - \vec \omega_0) \partial h(\vec \omega_0) - h(\vec \omega_0). 
\end{equation}

Next we parametrize $B^2_r(\vec \omega_0)$ by a coordinate $\vec \eta \in B^2(1)$:

$$\vec  \omega =\vec  \omega_0 + r \vec \eta. $$

Now we define the function $h_1$ by

\begin{equation} \label{h_1}
h_1 (\vec \eta) = r^{-2} \tilde h(\vec  \omega) = r^{-2} \tilde h(\vec  \omega_0 + r \vec \eta).
\end{equation}

We let $S_1$ be the graph of $h_1$.  The surface $S_1$ maintains the good properties of $S$.  If $h(\vec \omega) = | \vec \omega|^2$, then $h_1(\vec \eta) = |\vec \eta|^2$.  If $h$ obeys Conditions \ref{CondSnice} with $L$ derivatives, then so does $h_1$.    By equation \ref{tildeh}, we can check that

$$ 0 = \tilde h(\vec  \omega_0) = \partial \tilde h(\vec \omega_0); \partial^2 \tilde h(\vec \omega) = \partial h(\vec \omega). $$

Now using equation \ref{h_1}, we see that $0 = h_1(0) = \partial h_1(0)$.  Also, because of the parabolic rescaling, we have for any indices $i,j$, $\partial^2_{ij} h_1 (\vec \eta) = \partial^2_{ij} h(\vec \omega_0 + r \vec \eta)$.  In particular for all $\vec \eta \in B^2_1$, 

$$ 1/2 \le \partial^2 h_1 \le 2. $$

The function $h_1$ is clearly $C^\infty$ smooth, and another nice feature is that for $l \ge 3$, the $l^{th}$ derivatives of $h_1$ are smaller than for $h$.  In particular, a direct calculation shows that for all $l \ge 2$,

$$ \| \partial^l h_1 \|_{C^0} = r^{l-2} \| \partial^l h \|_{C^0}. $$  

The following lemma connects the behavior of $E_{S_0}$ on $B_R$ to the behavior of $E_{S_1}$ on a smaller ball.

\begin{lemma} \label{parabrescal} Suppose that $h$ obeys Conditions \ref{CondSnice}.  Let $S_1$ be as above: the restriction of the graph of $h$ to a ball of radius $r$.  If $E_{S_1}$ obeys the inequality

$$ \| E_{S_1} g \|_{L^p(B_{10 r R})} \le M \| g \|_{L^\infty(S_1)}, $$

then $E_{S_0}$ obeys the inequality

$$ \| E_{S_0} f \|_{L^p(B_R)} \le C r^{2 - \frac{4}{p}} M  \| f \|_{L^\infty(S_0)}. $$

\end{lemma}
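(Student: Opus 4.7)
The plan is to prove Lemma \ref{parabrescal} by an explicit change of variables on both the frequency and physical sides, converting $E_{S_0}$ acting on $B_R$ into $E_{S_1}$ acting on a dilated ball up to a unimodular phase.

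First I would parametrize $S_0$ as the graph of $h$ over $B^2_r(\vec\omega_0)$; the surface measure becomes $J(\vec\omega)\,d\vec\omega$ where $J(\vec\omega) = \sqrt{1 + |\partial h(\vec\omega)|^2}$, which is bounded above and below by absolute constants since Conditions \ref{CondSnice} give $\partial h(0)=0$ and $|\partial^2 h|\le 2$, forcing $|\partial h|\le 2$. Substituting $\vec\omega = \vec\omega_0 + r\vec\eta$ with $\vec\eta \in B^2_1$ produces a factor $r^2$ from the area element, and the identity
\[
h(\vec\omega_0 + r\vec\eta) \;=\; h(\vec\omega_0) + r\,\vec\eta\cdot\partial h(\vec\omega_0) + r^2 h_1(\vec\eta),
\]
coming directly from \eqref{tildeh}--\eqref{h_1}, splits the phase as
\[
\omega\cdot x \;=\; \bigl(\vec\omega_0\cdot\vec x + h(\vec\omega_0)x_3\bigr) \;+\; r\,\vec\eta\cdot\bigl(\vec x + \partial h(\vec\omega_0)\,x_3\bigr) \;+\; r^2 h_1(\vec\eta)\,x_3.
\]

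Second, I would introduce new physical coordinates through the linear map $\Phi: x \mapsto y$ defined by $\vec y := r\bigl(\vec x + \partial h(\vec\omega_0)\,x_3\bigr)$ and $y_3 := r^2 x_3$. In these coordinates the non-constant part of the phase becomes exactly $\vec\eta\cdot\vec y + h_1(\vec\eta)\,y_3$, which is the extension phase for $S_1$. Absorbing the bounded ratio of Jacobians $J(\vec\omega_0 + r\vec\eta)/J_1(\vec\eta)$ and the factor $r^2$ into a function $g$ on $S_1$ with $\|g\|_{L^\infty(S_1)} \le C\,\|f\|_{L^\infty(S_0)}$, I obtain the pointwise identity
\[
E_{S_0}f(x) \;=\; r^2\, e^{i\phi(x)}\,E_{S_1}g\bigl(\Phi(x)\bigr),
\]
where $\phi(x) = \vec\omega_0\cdot\vec x + h(\vec\omega_0)x_3$ is real.

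Third, I would take $L^p$ norms on $B_R$ and change variables by $\Phi$, whose linear determinant is $r\cdot r\cdot r^2 = r^4$. This yields $\|E_{S_0}f\|_{L^p(B_R)} = r^{2 - 4/p}\,\|E_{S_1}g\|_{L^p(\Phi(B_R))}$. Since $|\partial h(\vec\omega_0)|\le 2$, each coordinate $y_j$ satisfies $|y_j|\le 3rR$ for $x\in B_R$, so $\Phi(B_R)\subset B_{10rR}$. Inserting the hypothesis $\|E_{S_1}g\|_{L^p(B_{10rR})}\le M\,\|g\|_\infty$ and the $L^\infty$-bound on $g$ gives the claimed inequality with constant $C\,r^{2-4/p}M$.

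The only delicate step is the choice of $\Phi$: the cross term $r\,\vec\eta\cdot\partial h(\vec\omega_0)x_3$ in the phase must be absorbed by the linear shear $\vec x \mapsto \vec x + \partial h(\vec\omega_0)x_3$ \emph{before} the anisotropic rescaling (factors $r$ in the tangential directions and $r^2$ in the normal direction), for otherwise the reduced phase will not match the $S_1$ extension kernel and no clean pointwise identity is available. Everything else is routine Jacobian bookkeeping and a crude diameter estimate for $\Phi(B_R)$.
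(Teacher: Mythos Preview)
Your proposal is correct and follows essentially the same route as the paper: frequency substitution $\vec\omega=\vec\omega_0+r\vec\eta$, phase splitting via $h_1$, the linear shear-plus-anisotropic-dilation $\Phi$ with Jacobian $r^4$, and the containment $\Phi(B_R)\subset B_{10rR}$. The only cosmetic difference is bookkeeping: the paper absorbs the $r^2$ into $g$ (so $\|g\|_\infty\lesssim r^2\|f\|_\infty$ and $|E_{S_0}f(x)|=|E_{S_1}g(\bar x)|$), whereas you keep the $r^2$ as a prefactor in the pointwise identity; your phrase ``absorbing \ldots\ the factor $r^2$ into $g$'' is thus slightly misworded relative to your displayed formula, but the computation you actually carry out is consistent and yields the same bound.
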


\begin{proof} Let $f \in L^p(S_0)$.  We will express $E_{S_0} f$ using $E_{S_1}$.

$$ |E_{S_0} f(x) | = \left| \int_{S_0} e^{i \omega x} f(\omega) \dvol_{S_0} \right| . $$

Recall that we write $\vec \omega \in \RR^2$ for the first two coordinates of $\omega \in \RR^3$.  Expressing the last integral in these coordinates, we get

$$ = \left| \int_{B^2_r(\vec \omega_0)} e^{i \vec \omega \cdot \vec x} e^{i h(\vec \omega) x_3} f |Jh| d \vec \omega \right|, $$

\noindent where $|Jh_0|$ is the Jacobian $(1 + |\nabla h|^2)^{1/2}$.   Also, we write $\vec x$ for $(x_1, x_2)$.  We rewrite this equation using $\tilde h$ and then using $h_1$. 

$$ = \left| \int_{B^2_r(\vec \omega_0)} e^{i \vec \omega \cdot (\vec x + \partial h(\vec \omega_0) x_3)} e^{i \tilde h(\vec \omega) x_3} f |Jh| d \vec \omega \right| =$$

$$ = \left| \int_{B^2_1} e^{i \vec \eta \cdot r (\vec x + \partial h(\vec \omega_0))} e^{i h_1(\vec \eta) r^2 x_3} f |Jh| r^2 d \vec \eta \right|. $$

This expression is equal to $| E_{S_1} g(\bar x)|$ where

\begin{equation} \label{gdef}
g(\vec \eta) = f(\vec \omega_0 + r \vec \eta) r^2 |J h| |J h_1|^{-1},
\end{equation}

\begin{equation} \label{barxdef}
\bar x = (r x_1 + r \partial_1 h(\omega_0) x_3, r x_2 + r \partial_2 h(\omega_0) x_3, r^2 x_3). 
\end{equation}

Since $\nabla h, \nabla h_1$ vanish at zero, and since $|\nabla^2 h|$ and $|\nabla^2 h_1|$ are at most 2, we know that $|\nabla h|$ and $|\nabla h_1|$ are at most 2 on the unit disk.  Therefore the Jacobian factors $|J h_0|$ and $|Jh_1|$ are $\lesssim 1$.  Therefore, we see from Equation \ref{gdef} that

$$ \| g \|_{L^\infty(S_1)} \lesssim r^{2} \| f \|_{L^\infty(S_0)}. $$

Since $|\partial h(\omega_0)| \le 2$, we see from Equation \ref{barxdef} that if $x \in B_R$, then $\bar x \in B_{10 r R}$.  If we let $\Phi$ be the linear change of coordinates with $\bar x = \Phi (x)$, then the determinant of $\Phi$ is $r^{4}$.  Therefore, we have

$$ \| E_{S_0} f \|_{L^p(B_R)} \le r^{-4/p} \| E_{S_1} g \|_{L^p(B_{ 10 r R})} \le $$

$$ \le r^{-4/p} M \| g \|_{L^\infty(S_1)} \lesssim r^{2 - \frac{4}{p}} M  \| f \|_{L^\infty(S_1)}. $$

\end{proof}

Using parabolic rescaling, we now prove Theorem \ref{mainthm} from Theorem \ref{broad12/13}.

\begin{proof} We will prove the inequality by induction on the radius $R$.  We would like to prove that $\| Ef \|_{L^{3.25}(B_R)} \le \bar C_\eps R^\eps \| f \|_\infty$ for some constant $\bar C_\eps$ indepedent of $R$.  We know that $\| \Br_{K^{-\eps}} Ef\|_{L^{3.25}(B_R)} \le C_\eps R^\eps \| f \|_\infty$.  

We wish to bound $\int_{B_R} | Ef(x)|^{3.25} dx$.  
If $x$ is $K^{- \eps}$-broad, then $|Ef(x)| = \Br Ef$.  If not, then there exists some $K^{-1}$-cap $\tau$ so that $|Ef(x)| \le K^\eps | Ef_\tau(x)|$. Therefore, 

\begin{equation} \label{broad+caps}
\int_{B_R} |Ef|^{3.25} \le \int_{B_R} \Br_{K^{- \eps}} Ef^{3.25} + K^{O(\eps)} \sum_\tau \int_{B_R} |Ef_\tau|^{3.25}. 
\end{equation}

The contribution of the broad term is bounded by Theorem \ref{broad12/13}.  It is at most

$$( C_\eps R^\eps \| f \|_\infty )^{3.25}. $$

We have to prove the same bound for the $Ef_\tau$ terms.  We bound each term using Lemma \ref{parabrescal}.  We let $\tau$ be the graph of $h$ over $B^2_{K^{-1}} (\omega_0)$, and we let $S_1$ be the corresponding surface.  We know that $S_1$ obeys Conditions \ref{CondSnice}.  We can assume that $K$ is large enough so that $10 K^{-1} R < R/2$.  Using induction on $R$ and applying Lemma \ref{parabrescal} with $r = K^{-1}$, we see that

$$ \int_{B_R} |Ef_{\tau}|^{3.25} \le C K^{-2.5} (\bar C_{\eps} R^{\eps} \| f_\tau \|_\infty)^{3.25}. $$

Since there are $\sim K^2$ caps $\tau$, their total contribution to the right-hand side of Equation \ref{broad+caps} is

$$ \le C K^{-(1/2) + O(\eps)} ( \bar C_\eps R^\eps \| f \|_\infty )^{3.25}. $$
 
We also know that $\lim_{\eps \rightarrow \infty} K(\eps) = \infty$.  If $\eps$ is small enough, then $C K^{-(1/2) + O(\eps)} \le 1/100$.  Now choosing $\bar C_\eps = 10 C_\eps$ the induction closes.  \end{proof}

Using parabolic rescaling, we can also deduce Theorem \ref{mainintro} from Corollary \ref{maincor}.
We just sketch the argument, which is standard.  If $S$ is a compact $C^\infty$ surface with strictly positive second fundamental form, then we can divide $S$ into $C(S)$ pieces so that each piece is contained in the graph of a smooth function.  In appropriate orthonormal coordinates, each graph has the form $\omega_3 = h(\vec \omega)$ for $\vec \omega$ contained in a ball of radius $\sim_S 1$.  We can assume that $0 = h(0) = \partial h(0)$.  Because of the positive second fundamental form of $S$, we know that $0 < \lambda \le \partial^2 h \le \Lambda$, and we know that $h$ is $C^\infty$ smooth.  For any $L$, we can do parabolic rescaling with caps of radius $r = r(\lambda, \Lambda, \| h \|_{C^L})$ so that the function $h_1$ will have $| \partial^l h_1 | \le 10^{- 9}$ for all $3 \le l \le L$.  We can do another change of coordinates so that $\partial^2 h_1(0)$ is the identity matrix.  This coordinate change may increase the higher derivatives of $h$, but if we follow by more parabolic rescaling, we are reduced to functions $h$ obeying Conditions \ref{CondSnice}.  The total number of pieces in this decomposition is a constant depending only on $S$.  Applying Theorem \ref{maincor} to each piece and summing, we get Theorem \ref{mainintro}.

\subsection{Wave packet decomposition}

In this subsection, we decompose $Ef$ on $B_R$ into wave packets in a basically standard way.
First we decompose $S$ into $R^{-1/2}$-caps $\theta$.   We let $\omega_\theta$ be a point near the center of $S \cap \theta$, and we let $v_\theta$ denote the unit normal vector to $S$ at $\omega_\theta$.   

Let $\delta > 0$ be a small parameter.  For each cap $\theta$, we let $\mathbb{T}(\theta)$ be a set of cylindrical tubes parallel to $v_\theta$, with radius $R^{(1/2) + \delta}$ and length $\sim R$, covering $B_R$.  We choose the tubes with radius a little bigger than $R^{1/2}$ so that the wave packets decay very sharply outside of the tubes.  For each $\theta$, each point $x \in B_R$ lies in $O(1)$ tubes $T \in \TT(\theta)$.  
We let $\TT = \cup_\theta \TT(\theta)$.  

For any cap $\theta$, we let $3 \theta$ be a larger cap containing $\theta$.  If $\theta$ is the graph of $h$ over a ball $B^2_r(\vec \omega_\theta)$, then we can take $3 \theta$ to be the graph of $\theta$ over $B^2_{3r} (\vec \omega \theta)$.  

If $T$ is a tube in $\TT(\theta)$, we let $v(T) = v_\theta$ be the direction of the tube.

We can now state our result about wave packet decompositions.

\begin{prop} \label{wavepack} Suppose that $S$ obeys Conditions \ref{CondSnice}.  Let $\TT$ be as above, with $\delta > 0$.  Suppose that $R$ is sufficiently large, depending on $\delta$.  If $f$ is a function in $L^2(S)$, then for each $T \in \TT$, we can choose a function $f_T$ so that the following holds:

\begin{enumerate}

\item If $T \in \TT(\theta)$, then $\supp f_T \subset 3 \theta$.

\item If $x \in B_R \setminus T$, then $|E{f_T}(x)| \le R^{-1000} \| f \|_{L^2}$. 

\item For any $x \in B_R$, $| E{f}(x) - \sum_{T \in \TT} E{f_T}(x) | \le R^{-1000} \| f \|_{L^2}$.

\item (essential orthogonality) If $T_1, T_2 \in \TT(\theta)$ and $T_1, T_2$ are disjoint, then $\int f_{T_1} \bar f_{T_2} \le R^{-1000} \int_\theta |f|^2$.

\item $\sum_{T \in \TT(\theta)} \int_S |f_T|^2 \lesssim \int_\theta |f|^2$.  

\end{enumerate}

\end{prop}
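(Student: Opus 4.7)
The plan is to build the wave packets by a two-scale Fourier decomposition: first split $f$ smoothly according to caps $\theta$, and then within each cap split further according to tube positions $y_T$ in the plane $v_\theta^\perp$. To begin, fix a smooth partition of unity $\{\eta_\theta\}$ on $S$ subordinate to the $R^{-1/2}$-cover by caps, with $\supp \eta_\theta \subset 2\theta$, and set $f^\theta := \eta_\theta f$. For each cap, parametrize $S$ near $\theta$ by $\vec\omega \in \RR^2$ and identify the dual position variable with $v_\theta^\perp \cong \RR^2$. The tubes $T \in \TT(\theta)$ are indexed by lattice points $y_T$ in this plane with spacing $\sim R^{1/2+\delta}$. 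Choose a Schwartz function $\phi$ on $\RR^2$ whose rescaled translates $\{\phi((\cdot - y_T)/R^{1/2+\delta})\}$ form a partition of unity, and use Fourier inversion on the parameter patch of $\theta$ to define
$$ f_T(\vec\omega) := \psi_\theta(\vec\omega)\int \widehat{f^\theta}(y)\, \phi\bigl((y - y_T)/R^{1/2+\delta}\bigr) \, e^{i \vec\omega \cdot y}\, dy, $$
where $\psi_\theta$ is a smooth cutoff equal to $1$ on $2\theta$ and supported in $3\theta$.

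Once the $f_T$ are defined, the five properties follow from standard Fourier-analytic estimates. Property (1) is immediate from the cutoff $\psi_\theta$. Property (3) reconstruction follows from $\sum_{y_T} \phi((\cdot - y_T)/R^{1/2+\delta}) \equiv 1$ and Fourier inversion, with errors coming only from the Schwartz tails of $\phi$ beyond its effective support. Properties (4) and (5), the essential orthogonality and Bessel-type bound, follow from Plancherel applied in the $y$-variable: different $f_T$ within the same $\TT(\theta)$ have Fourier transforms supported in essentially disjoint $R^{1/2+\delta}$-balls centered at the respective $y_T$. The spatial decay property (2) is the heart of the argument: after substituting into the extension operator and rearranging, $Ef_T(x)$ becomes an oscillatory integral in $\vec\omega$ whose stationary-phase set consists of those $\vec\omega$ where $\nabla_{\vec\omega}\bigl((\vec\omega, h(\vec\omega)) \cdot x\bigr) = y_T$, which (inverting the Gauss map, using $\partial^2 h \sim 1$ from Conditions \ref{CondSnice}) pins $x$ to the central axis of $T$. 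For $x \in B_R \setminus T$, repeated integration by parts in $\vec\omega$ gains a factor of $\operatorname{dist}(x, T) / R^{1/2+\delta}$ per iteration.

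The main technical obstacle is ensuring all errors drop below the $R^{-1000}\|f\|_2$ threshold simultaneously. The choice $\delta > 0$ is exactly what affords this room: each non-stationary-phase integration by parts outside $T$ picks up a gain of $R^\delta$ beyond the naive uncertainty principle, so iterating $O(1/\delta)$ times (still a constant in $R$) produces the required polynomial decay. Similar Schwartz-tail estimates on $\phi$ handle the errors in (3)--(5). A secondary subtlety is that the $\vec\omega \leftrightarrow y$ duality is only truly linear in the flat parameter space, whereas the extension phase $\omega \cdot x$ involves the curved graph $\omega = (\vec\omega, h(\vec\omega))$; but Conditions \ref{CondSnice} ensure that on an $R^{-1/2}$-cap the quadratic part of the phase contributes only $O(R^{-1} \cdot R) = O(1)$, so the resulting Jacobian corrections between the flat Fourier variable $y$ and the true physical variable $x$ are absorbed by the Schwartz decay of $\phi$.
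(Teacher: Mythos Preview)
Your construction is essentially the same as the paper's: define $f_T$ as $\psi_\theta$ times the convolution of $f_\theta$ with $\hat\phi_T$ (the paper uses a sharp cap cutoff $f_\theta = f\chi_\theta$ rather than your smooth $\eta_\theta f$, and tracks a Jacobian factor $J$, but these are cosmetic). The verifications of (1), (3), (4), (5) via support considerations and Plancherel match the paper's directly; for (2) the paper avoids stationary-phase language by observing that $G_{x_3}(\vec\omega) := e^{ih(\vec\omega)x_3}\psi_\theta$ obeys $|\nabla^l G_{x_3}| \lesssim_l R^{l/2}$ on the cap, so $Ef_T = G_{x_3}^\vee * (\phi_T \cdot \check F_\theta)$ inherits decay from the separation between $x$ and $\supp\phi_T$---this is exactly your integration-by-parts argument in dual form, and the ``inverting the Gauss map'' step is not actually needed.
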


\begin{proof} Fix $\theta$.  We define $f_\theta$ to be $f \chi_{\theta}$.  

For each $\theta$ we choose orthonormal coordinates $\omega_1, ..., \omega_3$ so that $5 \theta$ is given by the graph of a function $h$:

$$ \omega_3 = h(\omega_1, \omega_2) = h(\vec \omega). $$

The domain of $h$ is a ball of radius $\sim R^{-1/2}$.  We can choose the coordinates so that $h$ and $\partial h$ vanish at the center of the ball.  Given Conditions \ref{CondSnice}, this function $h$ must obey the following inequalities on the ball:

\begin{equation}
|h| \lesssim R^{-1}; |\nabla h| \lesssim R^{-1/2}; |\nabla^l h| \lesssim_l 1 \textrm{ for all } l \ge 2.
\end{equation}

\noindent We let $(x_1, ..., x_3) = (\vec x, x_3)$ be the dual coordinates to $(\omega_1, ..., \omega_3) = (\vec \omega, \omega_3)$.  

Now we define the tubes of $\TT(\theta)$.  We cover $\RR^{2}$ with finitely overlapping balls $B$ of radius $R^{(1/2) + \delta}$.  We let $T$ be the set of points $x = (\vec x, x_3)$ with $\vec x \in B$.  
We let $\TT(\theta)$ be the set of tubes corresponding to balls $B$ that cover $B^{2}(R)$, and we let $\tilde\TT(\theta)$ be an infinite set of tubes corresponding to balls $B$ that cover $\RR^{2}$.  

We let $\phi_T$ be a partition of unity on $\RR^2$ subordinate to the covering by balls $B$.  
In fact, we make the slightly stronger assumption that the support of $\phi_T(\vec \omega)$ is contained in $(3/4) B$.  We can also think of $\phi_T$ as a partition of unity on $\RR^3$, subordinate to the covering by tubes $T$, where each function $\phi_T(x_1, x_2, x_3)$ is independent of $x_3$.  
We can assume that $| \nabla^l \phi_T | \lesssim_l (R^{(1/2)+ \delta})^{-l}$, and so the Fourier transform obeys the estimate:

$$ | \hat \phi_T (\vec \omega) | \lesssim \Area B \left( 1 + R^{(1/2) + \delta} |\vec \omega| \right)^{10^6}. $$

We let $\psi_\theta$ be a smooth function which is 1 on $2 \theta$ and has support in $3 \theta$.  We can also think of $\psi_\theta(\vec \omega)$ as a function on $\RR^2$.  We can assume that $| \nabla^l \psi_\theta| \lesssim_l R^{l/2}$.

We let $J$ denote the Jacobian factor $( 1 + | \nabla h|^2)^{1/2}$, and we define $F_\theta = J f_\theta$ so that

$$ F_\theta(\vec \omega) \domega = f_\theta(\omega) \dvol_S. $$

We can think of $F_\theta$ either as a function on $\RR^2$ or as a function on $\theta$.  Thinking of $F_\theta(\vec \omega)$ as a function on $\RR^2$, we can define the convolution $\hat \phi_T * F_\theta$.  Now we can define $F_T$ by:

$$ F_T (\vec \omega) := \psi_\theta (\vec \omega) \cdot ( \hat \phi_T * F_\theta) (\vec \omega). $$

We remark that in this formula, the $\psi_\theta$ has a very small effect.  The convolution $\hat \phi_T * F_\theta$ is essentially supported in a small neighborhood of $\theta$, because $\hat \phi_T(\vec \omega)$ decays rapidly for $| \vec \omega| \ge R^{-(1/2) - \delta}$ and $F_\theta$ is supported on $\theta$.  However, $\hat \phi_T * F_\theta$ does have a small tail, which we cut off by multiplying by $\psi_\theta$, so that $F_T$ is supported in $3 \theta$.  

Finally, we define $f_T$ by $F_T = J f_T$ so that

$$ F_T(\vec \omega) \domega = f_T(\omega) \dvol_S. $$

We have now defined $f_T$ and we have to check that it obeys Properties 1-5.

Since $F_T = \psi_{\theta} \cdot ( \hat \phi_T * F_\theta)$, and $\supp \psi_\theta \subset 3 \theta$, it follows that $\supp f_T \subset 3 \theta$, which proves Property 1.

The proof of Property 2 is probably the most important.  Let $T \in \tilde \TT(\theta)$.  
We write $Ef_T(x)$ as
$ \int e^{i \omega x} f_T(\omega) \dvol_S = \int e^{i \vec \omega \cdot \vec x} e^{i h(\vec \omega) x_3} F_T(\vec \omega) \domega$.  Then we plug in that $F_T = \psi_\theta \cdot (\hat \phi_T * F_\theta)$ and group terms to get

\begin{equation} \label{expandingEf_T}
 Ef_T(x) = \int e^{i \vec \omega \cdot \vec x} (e^{i h(\vec \omega) x_3} \psi_\theta) (\hat \phi_T * F_\theta) \domega. 
 \end{equation}

Let $G_{x_3}(\vec \omega) = e^{i h(\vec \omega) x_3} \psi_\theta$.  If we interpret the right-hand side of Equation \ref{expandingEf_T} as an inverse Fourier transform, then intertwining multiplication and convolution, we get:

$$Ef_T(x_1, x_2, x_3) = G_{x_3}^\vee * (\phi_T \cdot \check{F}_\theta). $$

Since $x \in B_R$, $|x_3| \le R$.  It then follows that $| \nabla^l G_{x_3}| \lesssim_l R^{l/2}$, and so

$$ |G_{x_3}^\vee(\vec x)| \lesssim \Area \theta \left(1 + |x| R^{-1/2} \right)^{- 10^6 \delta^{-1} }. $$

Since $x \notin T$, the distance from $x$ to $\supp \phi_T$ is $\ge (1/10) R^{(1/2) + \delta}$.  Finally $| \check{F}_\theta| \lesssim \| f \|_{L^2(\theta)}$.   Plugging these estimates into the convolution, we see that

$$ |Ef_T(x)| \le R^{-10000} \| f \|_{L^2(\theta)}.  $$

This proves Property 2, but for the future we also note a slightly stronger estimate:

\begin{equation} \label{strongprop2}
 |Ef_T(x)| \le R^{-10000} \| f \|_{L^2(\theta)} ( 1 + \Dist(x, T))^{-100}.
\end{equation}

Now we are ready to prove Property 3.  We write $Ef(x)$ as 

$$\sum_\theta E f_\theta(x) = \sum_\theta \int e^{i \omega x} F_\theta (\vec \omega) \domega.$$

Since $\psi_\theta$ is identically 1 on $\supp F_\theta \subset \theta$, we can rewrite this as 

$$ = \sum_\theta \int e^{i \omega x} \psi_\theta F_\theta (\vec \omega) \domega.$$

Now the infinite sum $\psi_\theta \sum_{T \in \tilde \TT(\theta)} \hat \phi_T * F_\theta$ converges to $\psi_\theta F_\theta$ in $L^2$ and hence in $L^1$ since the functions are all supported in $3 \theta$.  Therefore, we can write $Ef(x)$ as a convergent infinite sum:

$$ Ef(x) = \sum_\theta \sum_{T \in \tilde \TT(\theta)} \int e^{i \omega x} \psi_\theta (\hat \phi_T * F_\theta) \domega = \sum_{\theta} \sum_{T \in \tilde \TT(\theta)} Ef_T(x). $$

Finally we want to prune the last sum by including only tubes $T$ in $\TT(\theta)$ -- in other words, only the tubes $T$ that actually intersect $B_R$.  Since $x \in B_R$, the tubes we remove are all disjoint from $x$.  We bound their total contribution using the strong version of Property 2 in equation \ref{strongprop2}.  This proves Property 3.

We prove Property 4 using Plancherel's theorem as follows.  Suppose that $T_1, T_2$ are disjoint tubes in $\TT(\theta)$.  Expanding the definition of $f_{T_1}$ and $f_{T_2}$, we get

\begin{equation} \label{innerprodf_1f_2}
 \int f_{T_1} \overline{ f_{T_2}} \dvol_S = \int J \psi_\theta (\hat \phi_{T_1} * F_\theta) \bar \psi_\theta \overline{(\hat \phi_{T_2} * F_\theta)} \domega = \int \left(J |\psi_\theta|^2 \cdot (\hat \phi_{T_1} * F_\theta) \right) \overline{(\hat \phi_{T_2} * F_\theta)} \domega. 
 \end{equation}

Let $G = J |\psi_\theta|^2$.  Applying Plancherel, our integral is equal to:

\begin{equation} \label{towardsProp4}
\int \left( \check{G} * (\phi_{T_1} \check{F}_\theta) \right) \cdot \overline{ \phi_{T_2} \check{F}_\theta} dx_1 dx_2. 
\end{equation}

Since $T_1$ and $T_2$ are disjoint, $Dist(\supp \phi_{T_1}, \supp \phi_{T_2}) \ge (1/4) R^{(1/2) + \delta}$.  But on the other hand, $G$ obeys $| \nabla^l G | \lesssim_l R^{l/2}$, and so

$$ | \check{G}(\vec x)| \lesssim \Area \theta \left( 1 + |\vec x| R^{-1/2} \right)^{- 10^6 \delta^{-1}}. $$

Also $| \check{F}_\theta(x_1, x_2)| \lesssim \| f \|_{L^2(\theta)}$.  Plugging these bounds into equation \ref{towardsProp4}, we get

$$ \int f_{T_1} \overline{ f_{T_2}} \dvol_S \lesssim R^{-10^5} \| f \|_{L^2(\theta)}^2. $$

This proves Property 4.  

Finally, we turn to Property 5.  Using Equation \ref{innerprodf_1f_2}, we see

$$ \sum_{T \in \TT(\theta)} \int |f_T|^2 \dvol_S = \sum_{T \in \TT(\theta)} \int |\psi_\theta|^2 J |\hat \phi_T * F_\theta|^2 \domega.$$

Since $1 \le J \le 2$, this last integral is

\begin{equation} \label{withpsiintegral}
 \lesssim \sum_{T \in \TT(\theta)} \int |\psi_\theta|^2 |\hat \phi_T * F_\theta|^2 \domega.
 \end{equation}

Since $F_\theta$ is supported in $\theta$ and $\hat \phi_T$ decays rapidly, $\psi_\theta (\hat \phi_T * F_\theta)$ is almost equal to $(\hat \phi_T * F_\theta)$.  In quantitative terms, since $| \hat \phi T (\vec \omega)|$ decays rapidly for $| \vec \omega | \ge R^{-(1/2) - \delta}$, we get

$$ \psi_\theta (\hat \phi_T * F_\theta) (\vec \omega) = (\hat \phi_T * F_\theta) (\vec \omega) + O( R^{-10^5} (1 + | \vec \omega|)^{-10} \| f_\theta \|_2). $$

Using this estimate, we see that line \ref{withpsiintegral} is

$$ \le  \sum_{T \in \TT(\theta)} \int |\hat \phi_T * F_\theta|^2 \domega + O(R^{-10^5} \| f \|_{L^2(\theta)}^2. $$

We can evaluate the last integral by Plancherel, giving

$$ \sum_{T \in \TT(\theta)} \int |\phi_T|^2 |\check{F_\theta}|^2 dx_1 dx_2. $$

But since $\phi_T$ form a partition of unity, $\sum_{T \in \TT(\theta)} |\phi_T|^2 \le 1$, and so the last line is bounded by

$$ \int |\check{F_\theta}|^2 dx_1 dx_2 = \int |F_\theta|^2 \domega \lesssim \int_\theta |f|^2. $$

This proves Property 5.  \end{proof}

We will usually apply Proposition \ref{wavepack} to the functions $f_\tau$.  By Property 1, if $f_\tau$ is supported in $\tau$, then for every $T$, $f_{\tau, T}$ is supported in a $O(R^{-1/2})$ neighborhood of $\tau$.  

Suppose that $\TT_i \subset \TT$ are subsets.  For each $\tau$ and for each subset, we can define a corresponding function $f_{\tau, i}$:

$$ f_{\tau, i} := \sum_{T \in \TT_i} f_{\tau, T}. $$

\begin{lemma} \label{wavepack1} Consider some subsets $\TT_i \subset \TT$ indexed by $i \in I$.  If each tube $T$ belongs to at most $\mu$ of the subsets $\{ \TT_i \}_{i \in I}$, then for every $\theta$,

$$ \sum_{i \in I} \int_{3 \theta} | f_{\tau, i} |^2 \lesssim \mu \int_{10 \theta} |f_\tau|^2. $$

Also, 

$$  \sum_{i \in I} \int_S | f_{\tau, i} |^2 \lesssim \mu \int_S |f_\tau|^2. $$

\end{lemma}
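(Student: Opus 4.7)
The plan is to follow the Plancherel/orthogonality computation used at the end of the proof of Property 5 in Proposition \ref{wavepack}, but now keeping track of which wave packets belong to each subset $\TT_i$. First I would group the sum defining $f_{\tau, i}$ by cap: writing $f_{\tau, i, \theta'} := \sum_{T \in \TT_i \cap \TT(\theta')} f_{\tau, T}$, Property 1 gives $\supp f_{\tau, i, \theta'} \subset 3\theta'$, so the functions $\{f_{\tau, i, \theta'}\}_{\theta'}$ have boundedly overlapping supports on $S$. A pointwise Cauchy--Schwarz therefore gives $|f_{\tau, i}|^2 \lesssim \sum_{\theta'} |f_{\tau, i, \theta'}|^2$, and for the local estimate only those $\theta'$ with $3\theta' \cap 3\theta \ne \emptyset$ contribute; since caps have diameter $\sim R^{-1/2}$, such $\theta'$ are all contained in $10\theta$.

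The core step is to estimate $\sum_i \int |f_{\tau, i, \theta'}|^2 \dvol_S$ for each fixed $\theta'$. Letting $\Phi_{i, \theta'} := \sum_{T \in \TT_i \cap \TT(\theta')} \phi_T$ and $F_{\theta', \tau} := J f_\tau \chi_{\theta'}$, linearity of convolution gives $\sum_{T \in \TT_i \cap \TT(\theta')} \hat\phi_T * F_{\theta', \tau} = \hat\Phi_{i, \theta'} * F_{\theta', \tau}$, and combining this with $J, \psi_{\theta'} \lesssim 1$ yields, exactly as in the proof of Property 5,
\begin{equation*}
\int |f_{\tau, i, \theta'}|^2 \dvol_S \;\lesssim\; \int |\hat\Phi_{i, \theta'} * F_{\theta', \tau}|^2 \domega \;=\; \int |\Phi_{i, \theta'}|^2 |\check{F}_{\theta', \tau}|^2 \, dx_1\, dx_2
\end{equation*}
by Plancherel.

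Now the hypothesis on $\mu$ enters. Since the supports of the $\phi_T$ (the balls of radius $R^{(1/2)+\delta}$) have bounded overlap and $\phi_T \in [0, 1]$ is a partition of unity, a pointwise Cauchy--Schwarz gives $|\Phi_{i, \theta'}|^2 \lesssim \sum_{T \in \TT_i \cap \TT(\theta')} |\phi_T|^2$, and summing over $i$ yields $\sum_i |\Phi_{i, \theta'}|^2 \lesssim \mu \sum_{T \in \TT(\theta')} |\phi_T|^2 \lesssim \mu$. Inserting this bound and undoing Plancherel gives $\sum_i \int |f_{\tau, i, \theta'}|^2 \dvol_S \lesssim \mu \int |F_{\theta', \tau}|^2 \domega \lesssim \mu \int_{\theta'} |f_\tau|^2$. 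Summing over $\theta' \subset 10\theta$ (with bounded overlap of caps) gives the first inequality, and summing over all $\theta'$ gives the second.

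The essential orthogonality of Property 4 plays no direct role here; all the orthogonality we need is supplied by the Plancherel step on the frequency side. The only thing that should require care is bookkeeping --- verifying that the overlap constants for the caps $3\theta'$, for the tubes within a single $\TT(\theta')$, and for the balls supporting the $\phi_T$ are all absolute constants independent of $R$, $\mu$, $\tau$, and the index set $I$. Everything else is a straightforward replay of the computation already performed for Property 5.
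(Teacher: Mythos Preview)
Your proof is correct and takes a genuinely different route from the paper's. The paper expands the square $|f_{\tau,i,\theta'}|^2 = \sum_{T_1,T_2} f_{\tau,T_1}\overline{f_{\tau,T_2}}$, invokes Property~4 (essential orthogonality) to kill the off-diagonal terms with disjoint $T_1,T_2$, then uses bounded tube overlap and Property~5 as a black box to finish. You instead open up the construction of $f_{\tau,T}$, aggregate the partition-of-unity pieces into a single multiplier $\Phi_{i,\theta'}$, and run the Plancherel computation from the proof of Property~5 directly; the $\mu$-bound then enters pointwise via $\sum_i |\Phi_{i,\theta'}|^2 \lesssim \mu$. This avoids the error-term bookkeeping coming from Property~4 entirely, which is a genuine simplification. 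The trade-off is that the paper's argument treats Proposition~\ref{wavepack} as a black box (using only its listed Properties), while your argument reaches inside the specific wave packet construction; your version is cleaner here but less modular.
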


\begin{proof} Each $f_{\tau, i} = \sum_{T \in \TT_i} f_{\tau, T}$.  If $T \in \TT(\theta')$, then $\supp f_{\tau, T} \subset 3 \theta'$.  So in the integral on the left-hand-side, we only need to include the tubes in $\TT(\theta')$ for $O(1)$ caps $\theta'$ each lying in $10 \theta$.  
We define $\TT_i(\theta') := \TT_i \cap \TT(\theta')$, and $f_{\tau, i, \theta'} = \sum_{T \in \TT_{i}(\theta')} f_{\tau, T}$.

$$ \int_{3 \theta} | f_{\tau, i} |^2 \lesssim \sum_{3\theta' \cap 3 \theta \not= \phi} \int |f_{\tau, i, \theta'}|^2 .$$

For each $\theta'$, we expand $f_{\tau, i \theta'}$ to get

$$\sum_i \int |\sum_{T \in \TT_i(\theta')} f_{\tau, T} |^2 = \sum_i \sum_{T_1, T_2 \in \TT_i(\theta')} \int f_{\tau, T_1} \overline{f_{\tau, T_2}}. $$

We control the terms where $T_1$ and $T_2$ are disjoint using Property 4 above.  Each tube $T_1 \in \TT(\theta')$ intersects at most $O(1)$ other tubes $T_2 \in \TT(\theta')$.  Therefore, the last expression is bounded by:

$$ \lesssim \sum_i \left( \sum_{T \in \TT_i(\theta')} \int |f_{\tau, T}|^2 + O( |\TT_i(\theta')|^2 R^{-1000} \| f_{\tau, \theta'} \|_2^2) \right). $$

The big $O$ term contributes at most $|I| R^{-950} \| f_{\tau, \theta'} \|_2^2 \le \mu R^{-900} \| f_{\tau, \theta'} \|_2^2$, which is easily controlled by the right-hand-side.  Using Property 5, the main term is bounded by

$$ \le \mu \sum_{T \in \TT_i(\theta')} \int |f_{\tau, T}|^2 \lesssim \mu \int_{\theta'} |f_\tau|^2. $$

This proves that $\sum_{i \in I} \int_{3 \theta} | f_{\tau, i} |^2 \lesssim \mu \int_{10 \theta} |f_\tau|^2$, giving the first inequality in the conclusion.  Finally, if we sum this inequality over all the caps $\theta \subset S$, we get the second inequality.
\end{proof}

As a special case, applying the lemma above to a single subset $\TT_i \subset \TT$, we get the following:

\begin{lemma} \label{wavepack2}  If $\TT_i \subset \TT$, then for any cap $\theta$, and any $\tau$,

$$ \int_{3 \theta} |f_{\tau, i}|^2 \lesssim \int_{10 \theta} |f_\tau|^2. $$

\end{lemma}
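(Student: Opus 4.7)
The plan is to obtain Lemma \ref{wavepack2} as the one-element case of Lemma \ref{wavepack1}. Taking the index set $I = \{i\}$ to consist of the single subset $\TT_i$, every tube $T \in \TT$ belongs to at most $\mu = 1$ element of the collection $\{\TT_i\}_{i \in I}$, so the hypothesis of Lemma \ref{wavepack1} is satisfied with $\mu = 1$. Applying that lemma, the sum over $i \in I$ collapses to a single term and one reads off
$$ \int_{3\theta} |f_{\tau, i}|^2 \lesssim \int_{10\theta} |f_\tau|^2, $$
which is exactly the claim.

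There is no genuine obstacle, since the quantitative work has already been carried out in the proof of Lemma \ref{wavepack1}. Just to record the content being reused: in the direct approach one would expand $|f_{\tau,i}|^2 = |\sum_{T \in \TT_i} f_{\tau,T}|^2$, note that $\supp f_{\tau,T} \subset 3\theta'$ when $T \in \TT(\theta')$ so that only the $O(1)$ caps $\theta'$ with $3\theta' \cap 3\theta \neq \emptyset$ (all of which lie in $10\theta$) contribute to the integral over $3\theta$, and then for each such $\theta'$ split the resulting double sum over $T_1, T_2 \in \TT_i \cap \TT(\theta')$ into diagonal and off-diagonal parts. Property 4 of Proposition \ref{wavepack} kills the off-diagonal contributions from disjoint pairs (and the $O(1)$ pairs where $T_1$ meets $T_2$ can be absorbed using Cauchy--Schwarz), while Property 5 bounds $\sum_T \int |f_{\tau,T}|^2 \lesssim \int_{\theta'} |f_\tau|^2$. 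Summing the $O(1)$ contributions over the relevant caps $\theta' \subset 10\theta$ yields the stated inequality, confirming that the reduction to Lemma \ref{wavepack1} is entirely painless.
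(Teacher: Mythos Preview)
Your proposal is correct and matches the paper's own argument exactly: the paper also derives Lemma \ref{wavepack2} as the special case of Lemma \ref{wavepack1} applied to a single subset $\TT_i$ (so $\mu = 1$). Your additional paragraph unpacking the content of Lemma \ref{wavepack1} is accurate but unnecessary.
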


\section{The harmonic analysis part of the proof}

In this section, we give the heart of the proof of Theorem \ref{broad12/13}.  This section contains the proof except for the proofs of some geometric lemmas about how tubes intersect algebraic varieties.  The geometric lemmas have a different flavor, and we prove them in the next section.

\subsection{The inductive setup}

We will prove Theorem \ref{broad12/13} by an inductive argument.  In order to do the induction, we need to set up the Theorem in a slightly more general way.  

Instead of taking $f_\tau$ to be $f$ restricted to $\tau$ and taking the caps $\tau$ disjoint, we need to allow the caps $\tau$ to overlap.  Suppose that each $\tau$ is the graph of $h$ over a ball $B^2(\vec \omega_{\tau}, r)$, and that the union of $\tau$ is $S$.  We consider a decomposition $f = \sum_\tau f_\tau$, where $\supp f_\tau \subset \tau$.  We define $\alpha$-broad as before: $x$ is $\alpha$-broad for $Ef$ if $\max_\tau |Ef_\tau(x)| \le \alpha |Ef(x)|$.

We assume that the centers $\{ \vec \omega_{\tau} \} \subset B^2(1)$ are $K^{-1}$ separated.  We define the multiplicity $\mu$ of the covering by saying that the radius $r$ for each cap $\tau$ lies in the range $[K^{-1}, \mu^{1/2} K^{-1}]$.  Using the radius condition and the separation condition, it follows easily that any point lies in $O(\mu)$ different caps $\tau$.  

\begin{theorem} \label{maintech} For any $\epsilon > 0$, there exists $K, L$ and a small $\delta_{trans} \in (0, \epsilon)$, depending only on $\eps$, so that the following holds.

Suppose that $S$ is the graph of a function $h$ obeying Conditions \ref{CondSnice} for $L$ derivatives.  
Suppose that the caps $\tau$ cover $S$ as described above, with multiplicity at most $\mu$, and suppose that $\alpha \ge K^{-\epsilon}$.

If for any $\tau$ and any $\omega \in S$, 

$$\oint_{B(\omega, R^{-1/2}) \cap S} |f_\tau|^2 \le 1,$$ 

then

$$ \int_{B_R}  \BrEf^{3.25}  \le C_\epsilon R^\epsilon  \left(\sum_\tau \int_S |f_\tau|^2 \right)^{(3/2) + \eps} R^{\delta_{trans} \log (K^\epsilon \alpha \mu)}. $$

\noindent Moreover, $\lim_{\eps \rightarrow 0} K(\eps) = + \infty$.  

\end{theorem}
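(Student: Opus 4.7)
The plan is to prove Theorem \ref{maintech} by a double induction: an outer induction on the radius $R$, and an inner induction on the total $L^2$ mass $\sum_\tau\int_S|f_\tau|^2$ at a fixed $R$. Fix $\eps>0$ and choose (in this order) a large degree $D=D(\eps)$, a small scale $\delta_{trans}\in(0,\eps)$, an auxiliary wave-packet parameter $\delta$ with $\Poly(D)\ll R^\delta\ll R^\eps$, and a broadness constant $K=K(\eps)$. Decompose each $f_\tau$ into wave packets $f_{\tau,T}$ via Proposition \ref{wavepack}, where each tube $T$ has length $\sim R$ and radius $\sim R^{1/2+\delta}$. Apply the partitioning Corollary \ref{partitns} to the density $\chi_{B_R}\BrEf^{3.25}$ to produce a non-singular polynomial $P$ of degree $\le D$ whose complement consists of $\sim D^3$ cells $O_i$ each carrying equal $L^1$-mass. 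Let $W_0$ be the $R^{1/2+\delta}$-neighbourhood of $Z(P)$ and $O_i':=(O_i\cap B_R)\setminus W_0$; since a line meets $Z(P)$ in $\le D$ points, each tube enters at most $D+1$ of the sets $O_i'$. We split
\[\int_{B_R}\BrEf^{3.25}\lesssim\sum_i\int_{O_i'}\BrEf^{3.25}+\int_{W_0\cap B_R}\BrEf^{3.25},\]
and handle the dominant piece in one of two ways.

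In the cellular case, let $\TT_i$ be the tubes meeting $O_i'$ and set $f_\tau^{(i)}:=\sum_{T\in\TT_i}f_{\tau,T}$; rapid decay of $Ef_{\tau,T}$ off $T$ makes $Ef$ essentially equal to $\sum_\tau Ef_\tau^{(i)}$ on $O_i'$, and the broadness parameter $\alpha$ is preserved up to negligible error. By Lemma \ref{wavepack1} applied to $\{\TT_i\}$ with multiplicity $D+1$ we get $\sum_i\sum_\tau\int_S|f_\tau^{(i)}|^2\lesssim D\sum_\tau\int_S|f_\tau|^2$, while equipartition gives $\int_{O_i'}\BrEf^{3.25}\gtrsim D^{-3}\int_{B_R}\BrEf^{3.25}$ for $\sim D^3$ cells. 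Pigeonholing produces an $i$ with $\sum_\tau\int_S|f_\tau^{(i)}|^2\lesssim D^{-2}\sum_\tau\int_S|f_\tau|^2$; Lemma \ref{wavepack2} preserves the per-cap average hypothesis, so the inner induction (same $R$, smaller $L^2$ mass) applies to $f^{(i)}$. The resulting overhead $D^{3}\cdot D^{-2((3/2)+\eps)}=D^{-2\eps}$ closes the cellular case once $D(\eps)$ is large.

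In the wall case, subdivide $B_R$ into balls $B_j$ of radius $R^{1-\delta}$, and for each $j$ split the tubes meeting $W_0\cap B_j$ into a transverse family $\TT_{j,\mathrm{trans}}$ (tubes making a large angle with the tangent plane to $Z(P)$ at nearby points) and a tangential family $\TT_{j,\mathrm{tang}}$. The geometric lemmas of Section 4 will supply the two inputs we need: any fixed tube lies in at most $\Poly(D)$ transverse families, and the tangential tubes meeting any $R^{1/2}$-cube are concentrated near a common plane with angles $\sim R^{-1/2}$. For the transverse part, the outer induction at radius $R^{1-\delta}$ on each $B_j$ gives a saving $R^{-\delta\eps}$, beating the $\Poly(D)$ tube-multiplicity overhead. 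For the tangential part, broadness lets us dominate the $L^{3.25}$ integral by a bilinear integral $\int_{W_0\cap B_j}|Ef_{\tau_1,j,\mathrm{tang}}|^{p/2}|Ef_{\tau_2,j,\mathrm{tang}}|^{p/2}$ over $K^{-1}$-separated caps $\tau_1,\tau_2$; we estimate at $p=2$ by Plancherel and at $p=4$ by C\'ordoba's square-root-cancellation argument applied to the nearly coplanar tubes on each $R^{1/2}$-cube, and interpolate at $p=3.25$. Since only $\sim R^{1/2}$ of the caps $\theta$ contribute tangential tubes, the bound matches the planar example sharply \emph{without} invoking induction.

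The main obstacle will be making the three gains compatible at the critical exponent $3.25$. The tangential step is non-inductive, so it must by itself produce exactly the right-hand side of Theorem \ref{maintech}; this is what forces the pairing of $3.25$ with the $(3/2)+\eps$ power of the $L^2$ mass, and is the sense in which the planar example dictates the statement. Meanwhile the auxiliary factor $R^{\delta_{trans}\log(K^\eps\alpha\mu)}$ must absorb the drift in $\alpha$ and $\mu$ incurred at each recursive step: cellular steps may grow $\alpha$ and $\mu$ by a $\Poly(D)$ factor while transverse steps save $R^{-\delta\eps}$, and the $\log(K^\eps\alpha\mu)$ bookkeeping records how many cellular/transverse layers the induction has gone through. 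Choosing $\delta_{trans}\ll\eps$ and honouring $\Poly(D)\ll R^\delta\ll R^\eps$ is what lets the three gains combine into the stated inequality.
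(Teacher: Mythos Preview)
Your outline follows the paper's argument closely: double induction on $R$ and on $\sum_\tau\|f_\tau\|_2^2$, polynomial partitioning into cells, the cellular/transverse/tangential trichotomy, and the $L^2$--$L^4$ interpolation for the tangential bilinear term. The overall architecture is right.

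There is one substantive discrepancy. You fix a \emph{constant} degree $D=D(\eps)$, whereas the paper takes $D=R^{\delta_{\deg}}$ with $\delta_{\deg}=\eps^4$ (so $\delta_{trans}\ll\delta_{\deg}\ll\delta\ll\eps$). The reason is the cellular step. When you pass from $f$ to $f_i$ you must re-invoke the theorem with a slightly larger broadness parameter (the paper's Lemma~\ref{declemma1} gives $\Br_\alpha Ef\le 2\Br_{2\alpha}Ef_i+\text{neg.}$) and slightly larger cap multiplicity (the supports of $f_{\tau,T}$ sit in $3\theta$, so $\tau$ must be fattened). With the paper's crude bookkeeping $\alpha\mapsto2\alpha$, $\mu\mapsto2\mu$, the $R^{\delta_{trans}\log(K^\eps\alpha\mu)}$ factor picks up $R^{C\delta_{trans}}$, and the cellular overhead becomes $CD^{-2\eps}R^{C\delta_{trans}}$. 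For a constant $D$ this diverges with $R$ and the induction does not close; with $D=R^{\delta_{\deg}}$ one has $D^{-2\eps}R^{C\delta_{trans}}=R^{-2\eps\delta_{\deg}+C\delta_{trans}}\le1$. Your text is internally inconsistent on this point: you first say ``$\alpha$ is preserved up to negligible error'' (which, if carefully quantified as $\alpha\mapsto(1+O(R^{-N}))\alpha$ and $\mu\mapsto(1+O(KR^{-1/2}))\mu$, \emph{would} let a constant $D$ close the cellular case), but then in the last paragraph you say ``cellular steps may grow $\alpha$ and $\mu$ by a $\Poly(D)$ factor,'' which is neither what happens nor compatible with a constant $D$. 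Either adopt the paper's $D=R^{\delta_{\deg}}$ and the crude $\alpha\mapsto2\alpha$, $\mu\mapsto2\mu$, or keep constant $D$ but then actually prove the $1+o(1)$ drift bounds; as written the bookkeeping does not add up.

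One further omission: in the wall case, passing from $\Br_\alpha Ef$ to the transverse piece is more delicate than you indicate. The paper (Lemma~\ref{declemma2}) needs to sum over all subsets $I$ of the $\sim K^2$ caps $\tau$, defining $f_{I,j,trans}=\sum_{\tau\in I}f_{\tau,j,trans}$, because after peeling off the tangential part the broadness must be verified for the \emph{remaining} caps only. This $2^{O(K^2)}$-fold sum is harmless since $K=K(\eps)$, but it is a genuine step, and it is also where the constraint $\alpha\mu\le10^{-5}$ (and hence the role of the $R^{\delta_{trans}\log(K^\eps\alpha\mu)}$ factor in making the theorem vacuous once $\alpha\mu$ is too large) enters.
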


We can easily recover Theorem \ref{broad12/13} from Theorem \ref{maintech}.  Fix an $\epsilon > 0$.  By scaling $f$,  we can suppose that $\| f\|_\infty = 1$.  We divide $S$ into a disjoint union of $K^{-1}$-caps $\tau$.  The multiplicity of this cover is $\mu \lesssim 1$.  We take $f_\tau = f \chi_\tau$.  So $\sum_\tau \int_S |f_\tau|^2 = \int_S |f|^2$.  Since $\| f \|_\infty = 1$, we see that the average value of $|f_\tau|^2$ on any region is at most 1.  We take $\alpha = K^{-\epsilon}$.  The last factor $R^{\delta_{trans} \log (K^\epsilon \alpha \mu)}$ is $\le R^{C \delta_{trans}} \le R^{O(\epsilon)}$.  Now we can apply Theorem \ref{maintech}, and we see that $\int_{B_R} \BrEf^{3.25} \lesssim C_\eps R^{O(\eps)} (\int_S |f|^2)^{(3/2) + \eps}$.  Since $\| f \|_\infty =1$, this last expression is bounded by $C_\eps R^{O(\eps)} \| f \|_2^{3} \| f \|_{\infty}^{1/4}$. Raising both sides to the power $(3.25)^{-1} = 4/13$, we get $\| \BrEf \|_{L^{3.25}(B_R)} \le C_\eps R^{O(\eps)} \| f \|_2^{12/13} \| f \|_{\infty}^{1/13}$.  Since $\eps > 0$ is arbitrary, we recover Theorem \ref{broad12/13}.

There are several parameters to keep track of.  For reference later, we list them here and say how they are related.  We will take $\delta_{trans} = \epsilon^6$ and $K = e^{\epsilon^{-10}}$.  We also introduce two other small parameters: $\delta = \epsilon^2$.  We will have tubes of thickness $R^{(1/2) + \delta}$.  In the next section, we will choose a degree $D = R^{\delta_{deg}}$ with $\delta_{deg} = \eps^4$.  The key facts about the small parameters are

$$ \delta_{trans} \ll \delta_{deg} \ll \delta \ll \eps. $$

\noindent Also, we need $K$ very large compared to $\delta_{trans}$, so that $R^{\delta_{trans} \log (10^{-6} K^\eps) } \ge R^{1000}$.  

During the proof of Theorem \ref{maintech}, we write $A \lesssim B$ for $A \le C(\eps) B$.  For example, since $K$ is a constant depending on $\eps$, we have $K \lesssim 1$ and $\alpha \gtrsim 1$.  

\subsection{Polynomial partitioning} 
We will prove Theorem \ref{maintech} using polynomial partitioning.  We pick a degree $D = R^{\delta_{deg}}$ with $\delta_{deg} = \eps^4$.  Then we apply polynomial partitioning with this degree to the function $\chi_{B_R} \BrEf^{3.25}$.  Corollary \ref{partitns} tells us that there exists a non-zero polynomial $P$ of degree at most $D$ so that $\RR^n \setminus Z(P)$ is a disjoint union of $\sim D^3$ cells $O_i$, and so that for each $i$, 

$$\int_{O_i \cap B_R} \BrEf ^{3.25} \sim D^{-3} \int_{B_R} \BrEf^{3.25}. $$

\noindent Moreover, we can assume that $P$ is a product of non-singular polynomials.  This is a minor technical point that will help with the proofs of the Lemmas below.

We define $W := N_{R^{(1/2) + \delta}} Z(P)$, and we let $O_i' := (O_i \cap B_R) \setminus W$.   Then we define $\TT_i \subset \TT$ as:

$$ \TT_i := \{ T \in \TT \textrm{ so that }T \cap O_i' \not= \phi \}. $$

We define $f_{\tau, i} = \sum_{T \in \TT_i} f_{\tau, T}$.  We define $f_i = \sum_\tau f_{\tau, i}$.  

We remark that if $T \in \TT_i$, then $T \cap O_i'$ is non-empty, and so the core line of $T$ must intersect $O_i$.  Since a line can cross $Z(P)$ at most $D$ times, we see that each tube $T \in \TT$ intersects at most $D+1$ of the $O_i'$.  We state this estimate as a lemma.

\begin{lemma} Each tube $T \in \TT$ lies in at most $D+1$ of the sets $\TT_i$.  
\end{lemma}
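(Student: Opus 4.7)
The approach is the one sketched immediately before the lemma: use the buffer region $W$ to upgrade a tube-cell incidence into a \emph{core line}-cell incidence, and then invoke the elementary fact that a line either lies in $Z(P)$ or meets $Z(P)$ in at most $\Deg P = D$ points. The plan has two steps, one geometric and one algebraic.

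First I would show that if $T \in \TT_i$, then the core line $\ell$ of $T$ actually enters the cell $O_i$. Pick any $x \in T \cap O_i'$. Since $x \notin W$, we have $\Dist(x, Z(P)) > R^{(1/2)+\delta}$. Since $x \in T$ and $T$ is a cylinder of radius $R^{(1/2)+\delta}$ about $\ell$, its nearest point $y \in \ell$ satisfies $|x-y| \le R^{(1/2)+\delta}$. The segment $[x,y]$ stays inside the open ball $B(x, R^{(1/2)+\delta})$, which is disjoint from $Z(P)$; hence this segment lies in a single connected component of $\RR^3 \setminus Z(P)$, and since $x \in O_i$, the whole segment, and in particular $y$, lies in $O_i$.

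Second I would apply the line-variety bound. The line $\ell$ satisfies either $\ell \subset Z(P)$, or $\ell$ meets $Z(P)$ in at most $D$ points (since $P|_\ell$ is a univariate polynomial of degree $\le D$). The first alternative is excluded by the previous step, which produced a point of $\ell$ inside $O_i \subset \RR^3 \setminus Z(P)$. Thus $\ell \cap Z(P)$ has at most $D$ points, so $Z(P)$ divides $\ell$ into at most $D+1$ open arcs; each arc lies entirely in one connected component of $\RR^3 \setminus Z(P)$, so $\ell$ meets at most $D+1$ of the cells $O_i$. Consequently, $T \in \TT_i$ for at most $D+1$ indices $i$.

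The only place where there is anything to check is the geometric reduction from tubes to core lines. It succeeds precisely because the thickness of the buffer $W$ was chosen to equal the radius of the tubes; any smaller choice would fail. Everything else is routine, and no input from restriction theory is needed for this lemma.
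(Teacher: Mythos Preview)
Your proof is correct and follows exactly the approach the paper uses: reduce from the tube to its core line via the buffer $W$, then use that a line meets $Z(P)$ in at most $D$ points. The paper states this reasoning in the paragraph immediately preceding the lemma rather than as a formal proof, and your write-up simply fills in the geometric details (the segment-in-a-ball argument) that the paper leaves implicit.
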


The integral of $\BrEf^{3.25}$ on a cell $O_i'$ will be controlled using induction.  We also have to control the integral of $\BrEf^{3.25}$ on $W$.

We cover $B_R$ with $\sim R^{3 \delta}$ balls $B_j$ of radius $R^{1 - \delta}$.  If $B_j \cap W$ is non-empty, then we note which tubes of $\TT$ are tangent to $Z(P)$ in $B_j$ and which tubes of $\TT$ are transverse to $Z(P)$ in $B_j$.

\begin{definition} \label{deftang}
 $\TT_{j, tang}$ is the set of all $T \in \TT$ obeying the following two conditions:
\begin{itemize}

\item $T \cap W \cap B_j \not= \phi$.

\item If $z$ is any non-singular point of $Z(P)$ lying in $2 B_j \cap 10 T$,  then 

$$\Angle(v(T), T_z Z) \le R^{-(1/2) + 2 \delta}.$$

\end{itemize}
\end{definition}

(Recall that $v(T)$ is the unit vector in the direction of the tube $T$.)

\begin{definition} \label{deftrans}
 $\TT_{j, trans}$ is the set of all $T \in \TT$ obeying the following two conditions:
\begin{itemize}

\item $T \cap W \cap B_j \not= \phi$.

\item There exists a non-singular point $z$ of $Z(P)$ lying in $2 B_j \cap 10 T$,  so that 
$$\Angle(v(T), T_z Z) > R^{-(1/2) + 2 \delta}.$$ 

\end{itemize}
\end{definition}

We claim that any tube $T \in \TT$ that intersects $W \cap B_j$ lies in exactly one of $\TT_{j,tang}$ and $\TT_{j, trans}$.  Looking at the definitions, the only thing that we need to check is that if $T$ intersects $W \cap B_j$, then there is a non-singular point of $Z(P)$ in $10 T \cap 2 B_j$.  We recall that $W$ is the $R^{(1/2)+\delta}$ neighborhood of $Z(P)$, and that $R^{(1/2)+\delta}$ is also the radius of each tube $T$.  Therefore, if $x \in T \cap W \cap B_j$, then there is a point $z \in Z(P)$ with $\Dist(x,z) \le R^{(1/2) + \delta}$.  This point $z$ lies in $10 T \cap 2 B_j$.  Also, since $P$ is a product of non-singular polynomials, the non-singular points are dense in $Z(P)$ and we can assume that $z$ is a non-singular point.  

There are two important geometric lemmas about $\TT_{j, tang}$ and $\TT_{j, trans}$ that we use in our estimates.  We state them here and prove them in the next section.  The proofs use a little algebraic geometry and a little differential geometry.  They have a different flavor from the harmonic analysis arguments we have been discussing, and so we put them in their own section which concentrates on those ideas.  

We begin with an estimate about the transverse tubes. 

\begin{lemma} \label{transbound} Each tube $T \in \TT$ belongs to at most $\Poly(D) = R^{O(\delta_{deg})}$ different sets $\TT_{j, trans}$.
\end{lemma}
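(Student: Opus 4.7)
The plan is to set up coordinates so that $v(T) = e_3$, making $10T$ a cylinder $10U \times [-R,R]$ with $10U \subset \RR^2$ a disk of radius $\sim R^{1/2+\delta}$ centered at the origin. Since $R^{1/2+\delta} \ll R^{1-\delta}$, the tube is much thinner than the balls $B_j$, so $T$ meets only $\sim R^\delta$ of them, stacked along the $e_3$-axis; the goal is to show that only $\Poly(D)$ of these meet the ``transverse'' portion of $Z(P) \cap 10T$.

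The key step will be to convert the angle condition into a slope bound on the graph of $Z(P)$. At a non-singular point $z$ of $Z(P)$, the condition $\Angle(v(T), T_z Z) > R^{-1/2+2\delta}$ is equivalent (up to constants) to $|\nabla_u P(z)| < R^{1/2-2\delta} |\partial_3 P(z)|$, and in particular $\partial_3 P(z) \neq 0$, so locally $Z(P)$ is a smooth graph $x_3 = \phi(u)$ with $|\nabla_u \phi| \lesssim R^{1/2-2\delta}$. I then define the \emph{transverse set} $\mathcal{T} \subset Z(P) \cap 10T$ to be the set of non-singular points where this slope inequality holds. Since $\mathcal{T}$ is cut out by polynomial equations and strict inequalities of total degree $O(D)$, the Oleinik-Petrovskii-Milnor-Thom bound for semi-algebraic sets gives that $\mathcal{T}$ has at most $\Poly(D)$ connected components $\mathcal{T}_1, \ldots, \mathcal{T}_N$, and each transverse $B_j$ picks up a non-singular point in one of them.

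On each component $\mathcal{T}_\alpha$ we have $\partial_3 P \neq 0$, so distinct branches of $Z(P)$ cannot merge inside $\mathcal{T}_\alpha$, the projection $\pi(u,x_3) = u$ is a local diffeomorphism, and $\mathcal{T}_\alpha$ lies on a single smooth sheet $x_3 = \phi(u)$ over an open subset of $10U$. The slope bound on this sheet gives, for any path in $\pi(\mathcal{T}_\alpha)$ of length $\ell$, an oscillation of $\phi$ of at most $R^{1/2-2\delta} \cdot \ell$. Since $\pi(\mathcal{T}_\alpha) \subset 10U$ has diameter $\lesssim R^{1/2+\delta}$, the $x_3$-extent of $\mathcal{T}_\alpha$ should be $\lesssim R^{1-\delta}$, so $\mathcal{T}_\alpha$ meets only $O(1)$ of the balls $B_j$. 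Summing over $\alpha$ then yields the stated $\Poly(D)$ bound.

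The main obstacle will be making the $x_3$-extent bound rigorous: the projected component $\pi(\mathcal{T}_\alpha) \subset 10U$ is a connected semi-algebraic set but need not be convex, so straight-line paths between two of its points need not remain inside it. To handle this I would look for a path inside $\pi(\mathcal{T}_\alpha)$ whose length is controlled by the diameter of $10U$ times a factor depending only on $D$, using that $\pi(\mathcal{T}_\alpha)$ has bounded semi-algebraic complexity; alternatively one can work directly on the smooth manifold $\mathcal{T}_\alpha \subset \RR^3$ and bound its intrinsic diameter via the slope bound. Boundary effects at $\partial(10T)$, singular points of $Z(P)$, and the degenerate case of lines entirely contained in $Z(P)$ should be absorbable into the $\Poly(D)$ bound, aided by the assumption that $P$ is a product of non-singular polynomials.
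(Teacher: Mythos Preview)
Your approach is genuinely different from the paper's, and the obstacle you flag at the end is a real gap that you have not filled.

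The paper never counts connected components of the transverse set. Instead it introduces auxiliary algebraic \emph{curves} in $Z(Q)$: the level set $Z_{=b}$ where the angle with $v(T)$ equals some generic $b$ near $a$, and the curves $\Tan_w = \{z \in Z(Q): w \in T_z Z(Q)\}$ for a finite $(1/1000)$-net of unit vectors $w$. For curves in a tube the extent argument is elementary (a connected arc on which the projection has nonvanishing derivative projects injectively, so one-variable monotonicity works), and Bezout bounds the number of tube segments meeting these curves by $\lesssim D^3$. Away from those ``bad'' segments, any surface patch $Z_{comp}$ misses every $\Tan_w$, so its tangent plane is nearly constant; then $Z_{comp}$ is explicitly a graph with controlled slope over a convex planar domain, and a direct line-intersection count finishes.

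Your route has two weak points. First, the assertion that ``$\mathcal{T}_\alpha$ lies on a single smooth sheet $x_3=\phi(u)$'' does not follow from $\partial_3 P\neq 0$: the projection $\pi$ is only a \emph{local} diffeomorphism, and for a $2$-dimensional piece of surface (unlike a curve) that does not force global injectivity. You can bypass this by integrating the slope bound along a path in $\mathcal{T}_\alpha$ rather than invoking a global graph, but then you land squarely on the second issue you already identified: you need a path in $\mathcal{T}_\alpha$ whose projected length is at most $\Poly(D)$ times the diameter of $10U$. Bounds of this type on the geodesic diameter of a connected semi-algebraic set do exist in the literature (D'Acunto--Kurdyka type results), but they are not elementary and you would have to import that machinery to close the argument; your alternative ``bound the intrinsic diameter via the slope bound'' is circular, since the slope bound only compares intrinsic length to projected length and gives no independent control on either. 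The paper's device of first freezing the tangent plane via the $\Tan_w$ curves is precisely what lets it work over convex domains with straight-line paths and avoid this difficulty entirely.
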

 
We remark that a tube $T$ intersects $R^\delta$ different balls $B_j$.  We chose $\delta_{deg} = \epsilon^4$ much smaller than $\delta = \epsilon^2$.  So $T$ belongs to $\TT_{j, trans}$ for only a tiny fraction of these balls.  Using this estimate and induction we can control the contribution from the transverse tubes.  It might also be worth noting the following.  A line can transversely intersect $Z(P)$ in at most $D$ points.  Lemma \ref{transbound} is an analogous estimate with a tube in place of a line.  We get a weaker quantitative bound: polynomial in $D$ instead of linear in $D$.  This is good enough for our purposes, but it would be interesting to understand the worst-case behavior.

Next we give an estimate for the tangential tubes.

\begin{lemma} \label{tangbound} For each $j$, the number of different $\theta$ so that $\TT_{j, tang} \cap \TT(\theta) \not= \phi$ is at most $R^{(1/2) + O(\delta)}$.  
\end{lemma}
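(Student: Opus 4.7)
The plan is to bound the number of cap directions $v_\theta$ that admit a tangential tube by analyzing the geometry of the intersection between tubes and the algebraic surface $Z(P)$ inside the ball $2 B_j$.

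First I would set up the counting. Since Conditions \ref{CondSnice} give $S$ a strictly positive second fundamental form, the Gauss map $\omega \mapsto n_S(\omega)$ is a bi-Lipschitz diffeomorphism of $S$ onto its image in $S^2$. Hence the unit normals $\{ v_\theta \}$ associated to the $R^{-1/2}$-separated cap centers $\omega_\theta$ are themselves $\sim R^{-1/2}$-separated on $S^2$, and bounding the number of admissible $\theta$ is the same as bounding the number of $R^{-1/2}$-separated admissible directions on $S^2$. For each $\theta$ contributing to the count, I would choose a tangent tube $T_\theta \in \TT(\theta) \cap \TT_{j, tang}$. Since $T_\theta \cap W \cap B_j \neq \emptyset$, there is a non-singular $z_\theta \in Z(P) \cap 10 T_\theta \cap 2 B_j$, and by Definition \ref{deftang} the tangency condition gives $\Angle(v_\theta, T_z Z(P)) \leq R^{-1/2 + 2\delta}$ at every non-singular $z \in Z(P) \cap 10 T_\theta \cap 2 B_j$, not merely at $z_\theta$.

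Next I would prove a key geometric claim: the uniform tangency of $v_\theta$ to $Z(P)$ throughout $10 T_\theta \cap 2 B_j$ forces $Z(P)$ to contain a \emph{nearly flat sheet} $\Sigma_\theta \subset Z(P) \cap 10 T_\theta \cap 2 B_j$ of surface area $\gtrsim R^{3/2}$. The argument is that $Z(P)$ is locally the graph over the tangent plane $v_\theta^\perp$ of a function $g$ with $|\nabla g| \leq R^{-1/2 + 2\delta}$; since $|g|$ grows at rate at most $R^{-1/2+2\delta}$ per unit distance, the graph stays inside the tube of radius $10 R^{1/2+\delta}$ for a length $\sim R^{1-\delta}$ along $v_\theta$ and width $\sim R^{1/2+\delta}$ perpendicular to $v_\theta$ within the tangent plane. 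The algebraic non-singularity assumption for $P$ (a product of non-singular polynomials) ensures the graph extends this far without encountering singularities generically.

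I would then close by a double-counting / Crofton-type argument. A standard bound for algebraic surfaces of degree $D$ in a ball of radius $R^{1-\delta}$ gives $\Area(Z(P) \cap 2 B_j) \lesssim D \cdot (R^{1-\delta})^2 = R^{2 - 2\delta + \delta_{deg}}$. If we can bound the multiplicity $\mu(z) := \#\{\theta : z \in \Sigma_\theta\}$ at a generic point $z$ by $R^{O(\delta)}$, then
\[
|\{\theta : \TT_{j,tang} \cap \TT(\theta) \neq \emptyset\}| \cdot R^{3/2} \;\leq\; \sum_\theta \Area(\Sigma_\theta) \;\leq\; R^{O(\delta)} \cdot D \cdot R^{2 - 2\delta} \;\leq\; R^{3/2 + \delta_{deg} + O(\delta)},
\]
which yields the desired bound $R^{1/2 + O(\delta)}$.

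The main obstacle is exactly the multiplicity estimate. The naive count of $R^{-1/2}$-separated directions $v_\theta$ whose orthogonal plane approaches $T_z Z(P)$ within angle $R^{-1/2+2\delta}$ is as large as $R^{1/2+2\delta}$, coming from points in an $R^{-1/2+2\delta}$-annulus around a great circle on $S^2$, which would be too weak to close the estimate. To rule out this naive overlap one must exploit the \emph{extendedness} of each sheet: two sheets $\Sigma_{\theta_1}$ and $\Sigma_{\theta_2}$ with distinct $v_{\theta_1}, v_{\theta_2}$ necessarily stretch in different long directions along the surface, so the set of common points where both "flat sheets" of length $\sim R^{1-\delta}$ coexist is controlled by the algebraic geometry of $Z(P)$ -- either via an algebraic bound on the Gauss map (the dual variety has degree $\lesssim D^2$) or via a direct bound on the second fundamental form of $Z(P)$ at non-singular points away from inflections. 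Making this precise is precisely the "simple algebraic geometry and differential geometry" mentioned in the introduction, and it is where most of the work of Section 4 will go.
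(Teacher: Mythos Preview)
Your approach is genuinely different from the paper's, and as written it has two real gaps.

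\textbf{The sheet area claim is not justified.} You assert that $Z(P)\cap 10T_\theta\cap 2B_j$ contains a sheet $\Sigma_\theta$ of area $\gtrsim R^{3/2}$, arguing that $Z(P)$ is a graph over ``the tangent plane $v_\theta^\perp$'' with gradient $\le R^{-1/2+2\delta}$. First, $v_\theta^\perp$ is \emph{not} the tangent plane to $Z(P)$: since $v_\theta$ is nearly tangent to $Z(P)$, the normal to $Z(P)$ lies nearly in $v_\theta^\perp$, so you cannot graph $Z(P)$ over $v_\theta^\perp$. If instead you graph $Z(P)$ over a plane \emph{containing} $v_\theta$, the tangency condition $\Angle(v_\theta,T_zZ)\le R^{-1/2+2\delta}$ bounds only the partial derivative in the $v_\theta$ direction; it says nothing about the transverse derivative, so the tangent plane to $Z(P)$ can rotate freely around $v_\theta$. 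Concretely, if $P=x_2^2+x_3^2-1$ (a unit cylinder along the $x_1$-axis) then an $x_1$-parallel tube of radius $R^{1/2+\delta}$ is tangential in the sense of Definition~\ref{deftang}, but $Z(P)\cap 10T\cap 2B_j$ has area only $\sim R^{1-\delta}$, not $R^{3/2}$. What the tangency condition does buy you (and what the paper actually uses) is a \emph{curve} in $Z(P)$ tracking the core line of $T$, hence $T_\theta\cap B_j\subset N_{O(1)}Z(P)$.

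\textbf{The multiplicity bound is the whole problem, and you have not resolved it.} You correctly identify that the naive count of $R^{-1/2}$-separated directions making angle $\le R^{-1/2+2\delta}$ with a fixed $2$-plane is $\sim R^{1/2+O(\delta)}$, which is fatal. Your proposed fixes (degree of the dual variety, bounds on the second fundamental form) do not obviously control overlap of the sets $\Sigma_\theta$: the dual variety bounds the tangent \emph{planes}, not the tangent \emph{directions}, and second-fundamental-form bounds fail near flat pieces of $Z(P)$, which is exactly where many tangential tubes concentrate.

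The paper avoids both issues by working with volumes rather than surface area. It shows each tangential tube satisfies $T\cap B_j\subset N_{O(1)}Z(P)$, bounds $\Vol\bigl(N_{O(1)}Z(P)\cap B_j\bigr)\lesssim D\,R^{2-2\delta}$ by Wongkew's theorem, and then --- instead of a pointwise multiplicity bound --- runs Wolff's \emph{hairbrush} argument: if there were $\beta R^{1-\delta}$ tubes in $R^{-1/2+2\delta}$-separated directions, Cauchy--Schwarz and dyadic pigeonholing produce one tube $T_1$ meeting $\gtrsim \beta D^{-1}\theta R^{1-\delta}/\log R$ other tubes at angle $\sim\theta$; the union of those tubes has volume $\gtrsim \beta D^{-1}\theta R^{2-2\delta}/\log^2 R$ (they are essentially disjoint outside a core) but is contained in $N_{O(1)}Z(P)$ intersected with a $\theta R^{1-\delta}\times\theta R^{1-\delta}\times R^{1-\delta}$ slab, whose volume Wongkew bounds by $\lesssim D\theta R^{2-2\delta}$. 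Comparing gives $\beta\lesssim D^2\log^2 R = R^{O(\delta)}$. The hairbrush step is exactly what replaces your missing multiplicity estimate.
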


There are $\sim R$ different caps $\theta \subset S$.  The lemma says that only on the order of $R^{1/2}$ of these caps can contribute to $\TT_{j, tang}$.  For instance, if $Z(P)$ is a plane, then only the directions tangent to the plane can appear in $\TT_{j, tang}$.  

We let $f_{\tau, j, tang} := \sum_{T \in \TT_{j, tang}} f_{\tau, T}$ and $f_{j, tang} = \sum_\tau f_{\tau, j, tang}$ and similarly for $f_{\tau, j, trans}$ and $f_{j, trans}$.  

\subsection{The inductive step}

In this subsection, we break $\int_{B_R} \BrEf^{3.25}$ into pieces coming from the $f_i$, the $f_{j, trans}$, and the $f_{j, tang}$.  We call these the cellular pieces, the transverse pieces, and the tangential pieces.  We will bound the tangential pieces directly, and we will bound the other pieces by induction.  In this subsection, we explain how to break the integral into pieces, we state the bound for the tangential pieces, and we explain how the induction works.  We will come back to prove the bound for the tangential pieces in the next subsection.

Throughout the arguments, we will assume that $\eps$ is sufficiently small and $R$ is sufficiently large.  

If $x \in O_i'$, then $E{f_\tau}(x)$ is almost equal to $Ef_{\tau, i}(x)$ for each $\tau$.  We also want to think about how the
$\alpha$-broad part of $E{f}(x)$ relates to the $\alpha$-broad part of $E{f_i}(x)$.  

\begin{lemma} \label{declemma1}  If $x \in O_i'$ and $R$ is large enough, then 

$$\Br_\alpha Ef(x) \le 2 \Br_{2 \alpha} E{f}_i(x) + R^{-900} \sum_\tau \| f_\tau \|_2. $$

\end{lemma}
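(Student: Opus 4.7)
The plan is to transfer $\alpha$-broadness for $Ef$ to $2\alpha$-broadness for $Ef_i$ via a wave packet comparison on $O_i'$, and then read off the claimed inequality from the definition of the broad part. First, since $x\in O_i'$, any tube $T\notin\TT_i$ satisfies $T\cap O_i'=\emptyset$, and in particular $x\notin T$. The rapid off-tube decay in Property~(2) of Proposition~\ref{wavepack} (in its stronger form \eqref{strongprop2}) then gives, for every $\tau$ and every $T\notin\TT_i$, the estimate $|Ef_{\tau,T}(x)|\le R^{-10000}\|f_\tau\|_2(1+\Dist(x,T))^{-100}$. Summing over the at most $R^{O(1)}$ such tubes and invoking Property~(3) of Proposition~\ref{wavepack} applied to each $f_\tau$ yields the pointwise comparison
\[
|Ef_\tau(x)-Ef_{\tau,i}(x)|\le R^{-1000}\|f_\tau\|_2\quad\text{for each }\tau,
\]
and, summing over $\tau$,
\[
|Ef(x)-Ef_i(x)|\le R^{-1000}\sum_\tau\|f_\tau\|_2.
\]

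Second, I would split into two cases depending on whether $|Ef(x)|$ beats the above error. If $|Ef(x)|\le R^{-900}\sum_\tau\|f_\tau\|_2$, then $\Br_\alpha Ef(x)\le|Ef(x)|$ is already bounded by the error term appearing on the right-hand side, and the inequality is trivial (using $\Br_{2\alpha}Ef_i(x)\ge 0$). Otherwise $R^{-1000}\sum_\tau\|f_\tau\|_2<R^{-100}|Ef(x)|$, so the wave packet comparison upgrades to
\[
|Ef(x)|\le 2|Ef_i(x)|,\qquad |Ef_{\tau,i}(x)|\le|Ef_\tau(x)|+R^{-100}|Ef(x)|\text{ for every }\tau.
\]

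Third, assume $x$ is $\alpha$-broad for $Ef$ (otherwise $\Br_\alpha Ef(x)=0$ and there is nothing to prove). By definition $|Ef_\tau(x)|\le\alpha|Ef(x)|$ for every $\tau$, hence
\[
|Ef_{\tau,i}(x)|\le(\alpha+R^{-100})|Ef(x)|\le\frac{\alpha+R^{-100}}{1-R^{-100}}|Ef_i(x)|.
\]
Since $\alpha\ge K^{-\eps}$ is bounded below by a positive constant depending only on $\eps$, the quotient on the right is at most $2\alpha$ once $R$ is large enough (depending on $\eps$ and $K$). Thus $x$ is $2\alpha$-broad for $Ef_i$, so $\Br_{2\alpha}Ef_i(x)=|Ef_i(x)|\ge\tfrac12|Ef(x)|=\tfrac12\Br_\alpha Ef(x)$, yielding the claim.

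The argument is essentially bookkeeping once the wave packet comparison is in hand; the only delicate point is verifying that the polynomially small error $R^{-100}$ is eventually negligible against the constant lower bound on $\alpha$, which is precisely where the hypothesis that $R$ is sufficiently large (in terms of $\eps$ and $K$) is used.
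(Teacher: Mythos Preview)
Your proof is correct and follows essentially the same approach as the paper's: establish the wave-packet comparison $Ef_\tau(x)=Ef_{\tau,i}(x)+O(R^{-\text{big}}\|f_\tau\|_2)$ on $O_i'$, dispose of the case where $|Ef(x)|$ is already tiny, and then chain the broadness inequalities to show $x$ is $2\alpha$-broad for $Ef_i$. The only cosmetic differences are that you make the tube-counting and the final constant comparison $\frac{\alpha+R^{-100}}{1-R^{-100}}\le 2\alpha$ more explicit than the paper does.
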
 

\begin{proof} By Proposition \ref{wavepack}, we know that

$$ Ef_\tau(x) = \sum_{T \in \TT} Ef_{\tau, T}(x) + O( R^{-1000} \| f_\tau \|_2). $$

If $x \in T$, then $T$ must intersect $O_i'$ so $T \in \TT_i$.  If $x \notin T$, then Proposition \ref{wavepack} gives us the bound $|Ef_{\tau, T}(x)| \le R^{-1000} \| f_\tau \|_2$.  The total contribution of these $T \notin \TT_i$ is small, leaving

\begin{equation} \label{EftalmostEft_i}
Ef_\tau(x) = Ef_{\tau, i}(x) + O(R^{-990} \| f_\tau \|_2).
\end{equation}

Summing over $\tau$, we get

\begin{equation} \label{EfalmostEf_i}
Ef(x) = Ef_i(x) + O(R^{-990} \sum_\tau \| f_\tau \|_2). 
\end{equation}

Now we have to deal with the $\alpha$-broad issue.  We can assume that $|Ef(x)| \ge R^{-900} \sum_\tau \|f_\tau \|_2$ and hence $|Ef_i(x)| \ge (1/2) R^{-900} \sum_\tau \| f_\tau \|_2$.  We can also assume that $x$ is $\alpha$-broad for $Ef$.  Under these assumptions, it remains to show that $x$ is $2\alpha$-broad for $Ef_i$.  In other words, we have to show that for each $\tau$, 

$$ |Ef_{\tau, i}(x)| \le 2 \alpha |E f_i(x)|.  $$

Using Equations \ref{EftalmostEft_i} and \ref{EfalmostEf_i}, we see that

$$ |Ef_{\tau, i}(x)| \le |Ef_\tau(x)| + O(R^{-990} \| f_\tau \|_2) \le \alpha |Ef(x)| + O(R^{-990} \| f_\tau \|_2) \le $$

$$\le \alpha |Ef_i(x)| + O(R^{-990} \sum_\tau \| f_\tau \|_2) \le 2 \alpha |Ef_i(x)|.$$

\end{proof}

If $x \in W \cap B_j$, then the situation is more complicated.  
$E{f}(x)$ is almost equal to $E{f}_{j, trans}(x) + E{f}_{j, tang}(x)$.  
But in order for the $\alpha$-broad parts to behave well, we will need to use not only $E{f}_{j, trans}$ but some other related functions.

Recall that $S$ is divided into $\sim K^2$ caps $\tau$ of diameter $K^{-1}$.  

If $I$ is any subset of these caps, we let $f_{I, j, trans} = \sum_{\tau \in I} f_{\tau, j, trans}$.  The function $f_{I, j, trans}$ comes with a natural decomposition: if $\tau \in I$, we let $f_{\tau, I, j, trans} = f_{\tau, j, trans}$, and if $\tau \notin I$, then $f_{\tau, I, j, trans}  = 0$.  

Eventually we will estimate the terms involving $f_i$ or $f_{j, trans}$ by induction.  On the other hand, we will estimate the terms involving $f_{j, tang}$ by a direct computation.  For this computation, we will use a bilinear version of $f_{j, tang}$ which we now define.  We say that two caps $\tau_1, \tau_2$ are non-adjacent if the distance between them is $\ge K^{-1}$.  

$$ \BilEft := \sum_{\tau_1, \tau_2 \textrm{ non-adjacent}} | E{f}_{\tau_1, j, tang} |^{1/2} | E{f}_{\tau_2, j, tang} |^{1/2}. $$

With these definitions in hand, we can now state our lemma connecting $\BrEf$ with $f_i, f_{j, trans}$, and $f_{j, tang}$.

\begin{lemma} \label{declemma2}

If $x \in B_j \cap W$ and $\alpha \mu \le 10^{-5}$, then

$$ \Br_\alpha |E{f}(x)| \le 2 \left( \sum_I \Br_{2 \alpha} |E{f}_{I, j, trans}(x)| + K^{100} \BilEft(x) + R^{-900} \sum_\tau \| f_\tau \|_2 \right). $$

\end{lemma}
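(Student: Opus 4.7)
The plan is to first apply the wave packet decomposition (Proposition~\ref{wavepack}). For $x \in W \cap B_j$, every tube $T$ with $x \in T$ meets $W \cap B_j$ and therefore lies in $\TT_{j,tang} \cup \TT_{j,trans}$, while the remaining tubes contribute $O(R^{-1000}\|f_\tau\|_2)$ each. Summing gives
\[
Ef(x) = Ef_{j,tang}(x) + Ef_{j,trans}(x) + O\left(R^{-990} \sum_\tau \|f_\tau\|_2\right).
\]
Since $\Br_\alpha Ef(x)$ is either $0$ or $|Ef(x)|$, I may assume $x$ is $\alpha$-broad, and the triangle inequality reduces the problem to bounding $|Ef_{j,tang}(x)|$ by $K^{100}\Bil(Ef_{j,tang})(x)$ and $|Ef_{j,trans}(x)|$ by $\sum_I \Br_{2\alpha}|Ef_{I,j,trans}(x)|$, with the residual terms of size $R^{-990}\sum_\tau \|f_\tau\|_2$ absorbed into the last term on the right-hand side of the lemma.

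For the tangential piece, set $b_\tau = |Ef_{\tau,j,tang}(x)|$ and let $\tau^\star$ achieve the maximum $m$. If some $\tau'$ non-adjacent to $\tau^\star$ satisfies $b_{\tau'} \ge m/K^{100}$, then $\Bil(Ef_{j,tang})(x) \ge b_{\tau'}^{1/2} b_{\tau^\star}^{1/2} \gtrsim m/K^{50}$, while $|Ef_{j,tang}(x)| \le \sum_\tau b_\tau \le K^2 m$, yielding the bilinear bound comfortably. Otherwise every $\tau$ non-adjacent to $\tau^\star$ has $b_\tau < m/K^{100}$, so $Ef_{j,tang}(x)$ is concentrated on $\tau^\star$ and its $O(1)$ adjacent caps, giving $|Ef_{j,tang}(x)|\lesssim m$. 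In this localized subcase the identity $Ef_{\tau^\star,j,tang}(x) = Ef_{\tau^\star}(x) - Ef_{\tau^\star,j,trans}(x) + O(R^{-990}\|f_{\tau^\star}\|_2)$ combined with $|Ef_{\tau^\star}(x)|\le \alpha|Ef(x)|$ forces $|Ef_{\tau^\star,j,trans}(x)| \gtrsim m - \alpha|Ef(x)|$, transferring the mass into the transverse piece handled below.

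For the transverse piece, I construct $I^\star$ by a greedy iteration: start with $I_0$ equal to the set of all $K^{-1}$-caps, and while there exists $\tau \in I_k$ violating $|Ef_{\tau,j,trans}(x)| \le 2\alpha|Ef_{I_k,j,trans}(x)|$, remove the maximizing $\tau$. This halts in at most $K^2$ steps at some $I^\star$ for which $x$ is $2\alpha$-broad for $Ef_{I^\star,j,trans}$, so that $\Br_{2\alpha}|Ef_{I^\star,j,trans}(x)| = |Ef_{I^\star,j,trans}(x)|$ contributes one term to the sum over $I$. The main obstacle is to show that $|Ef_{I^\star,j,trans}(x)|$ is comparable to both $|Ef_{j,trans}(x)|$ and the single-cap contribution $|Ef_{\tau^\star,j,trans}(x)|$ from the localized tangential subcase. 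This is where the hypothesis $\alpha\mu \le 10^{-5}$ is essential: a catastrophic drop $|Ef_{I_{k+1},j,trans}(x)| \ll |Ef_{I_k,j,trans}(x)|$ upon removing $\tau_k$ would require $Ef_{\tau_k,j,trans}(x)$ to nearly cancel $Ef_{I_k \setminus\{\tau_k\},j,trans}(x)$, and iterating such near-cancellations (weighted by the multiplicity $\mu$, which controls how many caps a single tube belongs to) eventually forces some $|Ef_{\tau_k}(x)|$ to exceed $\alpha|Ef(x)|$, contradicting $\alpha$-broadness; moreover, if $\tau^\star$ survives in $I^\star$, then the $2\alpha$-broadness already gives $|Ef_{I^\star,j,trans}(x)| \ge |Ef_{\tau^\star,j,trans}(x)|/(2\alpha)$, which is more than enough. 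Combining the tangential and transverse bounds with the triangle-inequality decomposition of $Ef(x)$ and absorbing the $O(\alpha|Ef(x)|)$ errors yields the stated inequality, the factor $2$ on the right-hand side accommodating the initial split.
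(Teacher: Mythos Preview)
Your approach diverges from the paper's and contains a genuine gap in the transverse part. The greedy construction of $I^\star$ is not justified: when you remove a cap $\tau_k$ with $|Ef_{\tau_k,j,trans}(x)| > 2\alpha|Ef_{I_k,j,trans}(x)|$, the new value $|Ef_{I_{k+1},j,trans}(x)|$ can drop dramatically (even to zero) by cancellation, and your claim that ``iterating such near-cancellations eventually forces some $|Ef_{\tau_k}(x)|$ to exceed $\alpha|Ef(x)|$'' does not follow. The broadness hypothesis controls $|Ef_{\tau}(x)|$, not $|Ef_{\tau,j,trans}(x)|$; these differ by $Ef_{\tau,j,tang}(x)$, which you have no control over for the caps being removed. (Also, your parenthetical that $\mu$ ``controls how many caps a single tube belongs to'' misreads the setup: $\mu$ is the overlap multiplicity of the caps $\tau$, not a tube multiplicity.) So nothing prevents the iteration from terminating at an $I^\star$ with $|Ef_{I^\star,j,trans}(x)|$ far smaller than $|Ef(x)|$.

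The paper sidesteps this entirely by choosing $I$ differently: it sets $I := \{\tau : |Ef_{\tau,j,tang}(x)| \le K^{-100}|Ef(x)|\}$, i.e.\ it removes caps according to the size of their \emph{tangential} contribution. This single choice does three things at once. First, if $I^c$ contains two non-adjacent caps, the bilinear term dominates immediately. Second, if not, then $I^c$ is a cluster of $\lesssim \mu$ pairwise-adjacent caps, so by $\alpha$-broadness and $\alpha\mu \le 10^{-5}$ one has $|Ef_I(x)| \ge (9/10)|Ef(x)|$; since the tangential part of $Ef_I$ is $\le K^{-98}|Ef(x)|$ by construction, this forces $|Ef_{I,j,trans}(x)| \ge (2/3)|Ef(x)|$. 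Third, for each $\tau \in I$ one has $|Ef_{\tau,j,trans}(x)| \le |Ef_\tau(x)| + |Ef_{\tau,j,tang}(x)| + O(R^{-990}\|f_\tau\|_2) \le (1.1)\alpha|Ef(x)| \le 2\alpha|Ef_{I,j,trans}(x)|$, giving $2\alpha$-broadness for free. The point you are missing is that defining $I$ via the tangential part simultaneously guarantees that $Ef_{I,j,trans}$ is large and that it is broad; your greedy scheme on the transverse part secures broadness but loses control of the size.
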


Remark. In Lemma \ref{declemma2}, when we sum over $I$, we are summing over the roughly $2^{K^2}$ subsets of the set of caps $\tau$.  Since $K$ is a constant depending on $\eps$, this large-sounding number will turn out to be minor.  

\begin{proof}

Suppose $x \in B_j \cap W$.  We can assume that $x$ is $\alpha$-broad for $E{f}$ and that $|Ef(x)| \ge R^{-900} \sum_\tau \| f_\tau \|_2$.  

Let $I$ be the set of $K^{-1}$-caps $\tau$ so that $ | Ef_{\tau, j, tang} (x)| \le K^{-100} |Ef (x) |$ .  In other words, $I^c$ is the set of caps $\tau$ so that $ | Ef_{\tau, j, tang} (x)| \ge K^{-100} |Ef (x) |$ .  If $I^c$ contains two non-adjacent caps, then $ |Ef (x) | \le K^{100} \BilEft(x)$, and so the conclusion holds.

If $I^c$ does not contain two non-adjacent caps, then $I^c$ consists of at most $10^4 \mu$ caps, because the centers of the caps are $K^{-1}$ separated, and the radius of each cap is at most $\mu^{1/2} K^{-1}$.  
Since $x$ is $\alpha$-broad for $Ef$, and $\alpha \mu \le 10^{-5}$, we have

$$ \sum_{\tau \in I^c} |Ef_{\tau}(x)| \le 10^4 \mu \alpha |Ef(x)| \le (1/10) | E f(x) |. $$

Therefore, $| Ef_I(x) | \ge (9/10) |Ef(x) |$.  Next, we break up $Ef_I$ into tangential and transverse contributions.

If $T \in \TT$ and $T$ intersects $B_j \cap W$, then $T$ belongs to $\TT_{j, trans}$ or $\TT_{j, tang}$.  On the other hand, if $T$ does not intersect $B_j \cap W$, then $|f_{\tau, T}(x)| = O(R^{-1000} \| f_\tau \|_2)$.  Therefore,
for any cap $\tau$, we have

\begin{equation} \label{eqdeclemmab}
 |Ef_{\tau}(x)| \le |Ef_{\tau, j, trans}(x)| + |Ef_{\tau, j, tang}(x)| + O( R^{-990} \| f_\tau \|_2 ).
\end{equation}

Summing over $\tau \in I$, we see that

$$ |Ef_I(x)| \le |Ef_{I, j, trans}(x)| + \left( \sum_{\tau \in I} |Ef_{\tau, j, tang}(x)| \right) + O( R^{-990} \sum_\tau \| f_\tau \|_2 ). $$

But for each cap $\tau \in I$, $|Ef_{\tau, j, tang}(x)| \le K^{-100} |E f(x)|$, and so $ \sum_{\tau \in I} |Ef_{\tau, j, tang}| \le K^{-98} |Ef(x)|. $  Plugging this in and using that $|Ef_I(x)| \ge (9/10) |Ef(x)|$, we get:

$$ (9/10) |Ef (x)| \le |Ef_{I, j, trans}(x)| + K^{-98} |Ef(x)| + O(R^{-980} \sum_\tau \| f_\tau \|_2). $$

Since $|Ef(x)| \ge R^{-900} \sum_\tau \| f_\tau \|_2$, we see that 

\begin{equation} \label{Ef_Itransbig}
|Ef(x)| \le (3/2) |E f_{I, j, trans}(x)|.  
\end{equation}

In this case, it remains to prove that $x$ is $2 \alpha$-broad for $Ef_{I,j, trans}$.  Given Equation \ref{Ef_Itransbig}, it suffices to prove that for each $\tau \in I$, 

$$ |Ef_{\tau, j, trans}(x)| \le (1.1) \alpha |Ef(x)|. $$

From equation \ref{eqdeclemmab} above, we see that

$$ |Ef_{\tau, j, trans}(x)| \le |Ef_\tau(x)| + |Ef_{j, tang,\tau}(x)| + O(R^{-990} \| f_\tau \|_2). $$

Since $\tau \in I$, $|Ef_{\tau, j, tang}(x)| \le K^{-100} |Ef(x)|$.  Therefore, we have

$$ |Ef_{\tau, j, trans}(x)| \le \alpha |Ef(x)| + K^{-100} |Ef(x)| + O(R^{-990} \| f_\tau \|_2). $$

Because $|Ef(x)| \ge R^{-900} \sum_\tau \|f_\tau \|_2$ and $\alpha \ge K^{-\eps}$, we have

$$ |Ef_{\tau, trans , j}(x)| \le (1.1) \alpha |Ef(x)| . $$

Hence the point $x$ is $2 \alpha$-broad for $Ef_{I, j, trans}$.  \end{proof}

We can now state our estimate for the tangential terms.

\begin{prop} \label{tangtermbound} 

$$\int_{B_j} \BilEft^{3.25} \lesssim R^{O(\delta)} \left(\sum_\tau \int |f_\tau|^2 \right)^{3/2}. $$

\end{prop}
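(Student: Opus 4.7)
The plan is to bilinearize, interpolate between $L^2$ and $L^4$ bilinear endpoint estimates, and then use Lemma~\ref{tangbound} together with the hypothesis $\oint_\theta |f_\tau|^2 \le 1$ to close the accounting of exponents. Since there are at most $O(K^4) = O(1)$ pairs of non-adjacent $K^{-1}$-caps and the power-mean inequality $(\sum a_i)^{13/4} \lesssim \sum a_i^{13/4}$ applies when the number of summands is $O(1)$, it suffices to prove, for each fixed non-adjacent pair $(\tau_1, \tau_2)$,
\[
X_{13/8} := \int_{B_j} |E f_{\tau_1, j, tang}|^{13/8} |E f_{\tau_2, j, tang}|^{13/8} \;\lesssim\; R^{O(\delta)} \Big( \sum_\tau \|f_\tau\|_2^2 \Big)^{3/2}.
\]

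For the $L^2$ endpoint, Cauchy--Schwarz combined with the elementary Plancherel bound $\|E g\|_{L^2(B_R)}^2 \lesssim R\,\|g\|_{L^2(S)}^2$ (obtained by computing $\int_S \int_S \widehat{\chi_{B_R}}(\omega'-\omega) f\bar f\, dS\,dS$ and using the rapid decay of $\widehat{\chi_{B_R}}$) gives
\[
X_1 := \int_{B_j} |Ef_{\tau_1, j, tang}|\,|Ef_{\tau_2, j, tang}| \;\lesssim\; R \, \|f_{\tau_1, j, tang}\|_2 \, \|f_{\tau_2, j, tang}\|_2.
\]
For the $L^4$ endpoint I would tile $W \cap B_j$ by cubes $Q$ of side $R^{1/2+\delta}$. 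By Definition~\ref{deftang}, every tube $T \in \TT_{j, tang}$ meeting $Q$ makes an angle at most $R^{-1/2+2\delta}$ with a common tangent plane $T_{z_Q} Z(P)$, so within $Q$ these tubes are coplanar up to a transverse excursion that is negligible on the scale of the tube radius. C\'ordoba's $L^4$ argument from~\cite{C}, applied in $Q$, then yields the square-root cancellation
\[
\int_Q |Ef_{\tau_1, j, tang}|^2 |Ef_{\tau_2, j, tang}|^2 \lesssim R^{O(\delta)} \int_Q \Big(\sum_{T_1} R^{-1}\|f_{T_1}\|_2^2 \chi_{T_1}\Big)\Big(\sum_{T_2} R^{-1}\|f_{T_2}\|_2^2 \chi_{T_2}\Big),
\]
with sums over $T_i \in \TT_{\tau_i, j, tang}$. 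Summing over $Q$, using $|T_1 \cap T_2| \lesssim R^{3/2+O(\delta)}$ (from the angular separation $\gtrsim K^{-1}$ forced by non-adjacency, with $K \lesssim 1$), and invoking Property~5 of Proposition~\ref{wavepack}, I obtain
\[
X_2 := \int_{B_j} |Ef_{\tau_1, j, tang}|^2 |Ef_{\tau_2, j, tang}|^2 \;\lesssim\; R^{-1/2 + O(\delta)} \|f_{\tau_1, j, tang}\|_2^2 \|f_{\tau_2, j, tang}\|_2^2.
\]

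Lyapunov's inequality then gives $X_{13/8} \le X_1^{3/8} X_2^{5/8} \lesssim R^{1/16 + O(\delta)} (\|f_{\tau_1, j, tang}\|_2 \|f_{\tau_2, j, tang}\|_2)^{13/8}$. To cancel the stray $R^{1/16}$ I invoke Lemma~\ref{tangbound}: tangential tubes come from at most $R^{1/2+O(\delta)}$ distinct caps $\theta$, and $\oint_\theta |f_{\tau_i}|^2 \le 1$ gives $\|f_{\tau_i}\|_{L^2(\theta)}^2 \le |\theta| \sim R^{-1}$ per cap, so wave-packet orthogonality forces $\|f_{\tau_i, j, tang}\|_2^2 \lesssim R^{-1/2+O(\delta)}$. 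Splitting each factor as $\|f_{\tau_i, j, tang}\|_2^{13/8} = \|f_{\tau_i, j, tang}\|_2^{3/2} \cdot \|f_{\tau_i, j, tang}\|_2^{1/8}$, applying the trivial bound $\|f_{\tau_i, j, tang}\|_2 \le \|f_{\tau_i}\|_2$ to the first factor and the bound $\|f_{\tau_i, j, tang}\|_2 \le R^{-1/4+O(\delta)}$ to the second, extracts an $R^{-1/16+O(\delta)}$ that exactly cancels the $R^{1/16}$; combining with $\|f_{\tau_i}\|_2^{3/2} \le (\sum_\tau \|f_\tau\|_2^2)^{3/4}$ closes the inequality.

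The main obstacle is the rigorous execution of the bilinear $L^4$ estimate: justifying that C\'ordoba's planar $L^4$ argument transfers to 3D tangential tubes in each cube $Q$, and reconciling the local estimate on $W \cap B_j$ with the integral over all of $B_j$ claimed in the proposition (the contribution from $B_j \setminus W$ should be small because tangential tubes stay close to $Z(P)$ over the relevant range). Once the two endpoints are in hand, the exponent accounting is essentially forced by the planar example, which is sharp for the final inequality.
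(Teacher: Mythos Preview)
Your proposal is correct and follows essentially the same route as the paper: reduce to a fixed non-adjacent pair, establish the bilinear $L^4$ bound via C\'ordoba's argument on $R^{1/2}$-cubes $Q$ (the paper's Lemma~\ref{locbilinear}) and the $L^2$ bound via Plancherel, interpolate to $p=13/4$, and then use Lemma~\ref{tangbound} together with the hypothesis $\oint_\theta |f_\tau|^2 \le 1$ to convert the extra $1/8$ of a power of $\|f_{\tau,j,tang}\|_2^2$ into the $R^{-1/16}$ that cancels the residual $R^{1/16}$. Your observation about $B_j$ versus $B_j \cap W$ is accurate: the paper in fact proves the bound on $B_j \cap W$, which is all that is used in the induction, and the tangency condition forces the tubes of $\TT_{j,tang}$ to stay in $W$ throughout $B_j$ anyway (this is the first step in the proof of Lemma~\ref{geomtangbound}), so the discrepancy is harmless.
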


We will prove Proposition \ref{tangtermbound} in the next subsection.  The argument is basically standard.  The proof is important though, and it involves the key moment where we use that the exponent is $3.25$ and not smaller.  

Now we use induction to prove Theorem \ref{maintech}.  We do induction on the radius $R$.  For each radius $R$, we also induct on $\sum_\tau \int |f_\tau|^2$.  As a base of the induction, the theorem is true when $R=1$ or when $\sum_\tau \int |f_\tau|^2 \le R^{-1000}$.  For $R=1$ the theorem is trivial.  If $\sum_\tau \int |f_\tau|^2 \le R^{-1000}$, the theorem follows from observing that $\sup |\Br_\alpha E f| \le ( \sum_\tau \int_S |f_\tau| ) \le C R^{O(\eps)}  ( \sum_\tau \int_S |f_\tau|^2 )^{1/2}$.  Therefore, 

$$ \int_{B_R} |\Br_\alpha E f |^{3.25} \le C R^3 \left( \sum_\tau \int_S |f_\tau| \right)^{3.25} \le 
C R^4 \left( \sum_\tau \int_S |f_\tau|^2 \right)^{(3/2) + (1/8)} \le $$

$$ \le C R^{-100}  \left( \sum_\tau \int_S |f_\tau|^2 \right)^{(3/2) + \eps}. $$

So we can assume Theorem \ref{maintech} holds for radii $\le R/2$ or for functions $g$ with $\sum_\tau \int |g_\tau|^2 \le (1/2) \sum_\tau \int |f_\tau|^2$.  If $\mu \alpha \ge 10^{-6}$, the conclusion of Theorem \ref{maintech} is also trivial, because the factor $R^{\delta_{trans} \log(K^{\epsilon} \alpha \mu)}$ is so large.  We chose $K(\eps) = e^{\eps^{-10}}$ and so the exponent $\eps^6 \log (K^\eps 10^{-6}) \gtrsim \eps^{-4}$.  If $\eps$ is small enough, the factor $R^{\delta_{trans} \log(K^{\epsilon} \alpha \mu)}$ is at least $R^{1000}$, and then the bound is trivially true.  So we can also assume that $\mu \alpha \le 10^{-6}$. 

We decompose our main integral into pieces in the cells and a piece coming from the walls between cells: 

$$\int_{B_R} \BrEf^{3.25} = \sum_i \int_{B_R \cap O_i'} \BrEf^{3.25} + \int_{B_R \cap W} \BrEf^{3.25}.$$

If the cellular term dominates, then we proceed as follows.  Since $\int_{B_R \cap O_i} \BrEf^{3.25}$ is essentially independent of $i$, there must be $\sim D^3$ different cells $O_i'$ so that

\begin{equation} \label{signifi}
 \int_{B_R \cap O_i'} \BrEf^{3.25} \sim D^{-3} \int_{B_R} \BrEf^{3.25}.
 \end{equation}

For each such $i$, applying Lemma \ref{declemma1}, we see that

$$ \int_{B_R} \BrEf^{3.25} \lesssim D^3 \int_{B_R \cap O_i'} \BrEf^{3.25} \lesssim D^3 \int_{B_R} \Br_{2 \alpha} Ef_i^{3.25} + R^{-1000} \sum_\tau \| f_{\tau, i} \|_2^{3.25}. $$

(The last term is a minor error term coming from Lemma \ref{declemma1}.  If that term dominates, then we get the desired bound for $\int_{B_R} \BrEf^{3.25}$ immediately.)

Next we consider $\sum_\tau \int |f_{\tau, i}|^2$.  We noted above that each tube $T$ lies in $\TT_i$ for at most $D+1$ values of $i$.  By Lemma \ref{wavepack1}, we know that $\sum_i \int |f_{\tau, i}|^2 \lesssim D \int |f_\tau|^2$.  Now we can choose a particular $i$ which obeys equation \ref{signifi} and so that

$$ \sum_\tau \int |f_{\tau, i}|^2 \lesssim D^{-2} \sum_\tau \int |f_\tau|^2. $$

We claim that we can apply Theorem \ref{maintech} to $f_i = \sum f_{\tau, i}$.  By Proposition \ref{wavepack}, we know that $\supp f_{\tau, i}$ is in a tiny neighborhood of $\tau$.  Therefore, the new multiplicity is only slightly larger than $\mu$ - it is certainly at most $2 \mu$.  By Lemma \ref{wavepack2}, we know that for any $\omega \in S$, 

$$\oint_{B(\omega, R^{-1/2}) \cap S} |f_{\tau, i}|^2 \lesssim \oint_{B(\omega, 10 R^{-1/2}) \cap S} |f_\tau|^2 \lesssim 1. $$

\noindent Therefore, after multiplying $f_i$ by a constant, it obeys all the assumptions of Theorem \ref{maintech}.  Moreover, $\sum_\tau \int |f_{\tau, i}|^2 \le (1/2) \sum_\tau \int |f_\tau|^2$.  By induction on $\sum_\tau \int |f_\tau|^2$, we can apply Theorem \ref{maintech} to $f_i$.  When we do so, we get the following bound:

$$ \int_{B_R} \BrEf^{3.25} \lesssim D^3 \int_{B_R} \Br_{2 \alpha} Ef_i^{3.25} \lesssim $$

$$ D^3 C_\epsilon R^\eps R^{\delta_{trans} \log (4 \alpha \mu K^\epsilon) } \left( \sum_\tau \int |f_{\tau, i}|^2 \right)^{(3/2) + \eps} .$$

Since $\sum_\tau \int |f_{i,\tau}|^2 \lesssim D^{-2} \sum_\tau \int |f_\tau|^2$, we get all together:

$$ \int_{B_R} \BrEf^{3.25} \le \left(C D^{- 2 \eps} R^{C \delta_{trans} } \right) C_\epsilon R^\eps R^{\delta_{trans} \log (\alpha \mu K^\epsilon) } \left(\sum_\tau \int |f_\tau|^2 \right)^{(3/2) + \eps}. $$

To close the induction, it just suffices to prove that the term in parentheses is $\le 1$.  This term is at most $R^{- \delta_{deg} \epsilon + C \delta_{trans}}$.  Since $\delta_{deg} = \epsilon^4$ and $\delta_{trans} = \epsilon^6$, the exponent of $R$ is negative and the induction closes.

Returning to the decomposition $\int_{B_R} \BrEf^{3.25} = \sum_i \int_{B_R \cap O_i'} \BrEf^{3.25} + \int_{B_R \cap W} \BrEf^{3.25}$, let us now suppose that the contribution from the cell walls dominates.  By Lemma \ref{declemma2}, we now have

$$ \int_{B_R} \BrEf^{3.25} \lesssim $$
$$\sum_{j, I} \int_{B_j} \Br_{2 \alpha} Ef_{I, j, trans}^{3.25} + \sum_j K^{100} \int_{B_j} \BilEft^{3.25} + O(R^{-1000} \sum_\tau \| f_\tau \|_2^{3.25}). $$

If the final $O$-term dominates, then the conclusion holds trivially, using the fact that $\sum_\tau \| f_\tau \|_2^2 \lesssim 1$.  
By Proposition \ref{tangtermbound}, we know that the tangential term is bounded by $R^{O(\delta)} (\sum_\tau \int |f_\tau|^2)^{3/2} \le R^\epsilon (\sum_\tau \int |f_\tau|^2)^{3/2}$.  So if the tangential term dominates we are also done.  Therefore, we are left with the case where

\begin{equation} \label{transversecase}
 \int_{B_R} \BrEf^{3.25} \lesssim \sum_{j, I} \int_{B_j} \Br_{2 \alpha} E f_{j, trans, I}^{3.25}.
 \end{equation}

We claim that we can apply Theorem \ref{maintech} to each integral on the right-hand-side.  The ball $B_j$ has radius $R^{1-\delta}$, so by induction on the radius Theorem \ref{maintech} applies.  We have to check that $f_{j, trans,I}$ satisfies the hypotheses.  By Proposition \ref{wavepack}, $\supp f_{\tau, I, j, trans}$ lies in a small neighborhood of $\tau$ - a slightly larger cap.  As above, the multiplicity of the new covering with slightly larger caps is at most $2 \mu$.   By Lemma \ref{wavepack2}, we have for any $\omega \in S$, 

$$ \oint_{B(\omega, R^{-1/2}) \cap S} |f_{j, trans, I,\tau}|^2 \lesssim \oint_{B(\omega, 10 R^{-1/2}) \cap S} |f_\tau|^2 \lesssim 1. $$

\noindent  Therefore, we may apply Theorem \ref{maintech} to each of the integrals on the right-hand side of Equation \ref{transversecase}.  We get the following upper bound: 

$$ \int_{B_j}  \Br_{2 \alpha} Ef_{j, trans, I}^{3.25} \lesssim C_\eps R^{(1 - \delta) \eps} R^{\delta_{trans} \log (4 \alpha \mu K^\epsilon)} (\sum_\tau \int |f_{\tau, j, trans}|^2)^{(3/2) + \eps}.$$

To bound $\int_{B_R} \BrEf^{3.25}$, we have to sum over all $j, I$.  Now the crucial point is Lemma \ref{transbound}, which tells us that a given tube $T$ lies in $\TT_{j, trans}$ for at most $\Poly(D)$ values of $j$.  (The number of different values of $I$ is only a constant depending on $\epsilon$.)  Therefore, by Lemma \ref{wavepack1}, 

$$ \sum_{j} \int |f_{\tau, j, trans}|^2 \lesssim \Poly(D) \sum_\tau \int |f_\tau|^2, $$

and hence

$$ \sum_{j, I} (\sum_{\tau \in I} \int |f_{\tau, j, trans}|^2)^{(3/2) + \eps} \lesssim \Poly(D) (\sum_\tau \int |f_\tau|^2)^{(3/2) + \eps}. $$

Summing over $j, I$ and plugging this in, we get the following bound:

$$ \int_{B_R} \BrEf^{3.25} \le \Poly(D) C_\eps R^{(1 - \delta) \eps} R^{\delta_{trans} \log (4 \alpha \mu K^{\epsilon})} (\sum_\tau \int |f_\tau|^2 )^{(3/2) + \eps} = $$

$$ = \left( C \Poly(D) R^{-\delta \eps} R^{C \delta_{trans}} \right) C_\eps R^{\eps} R^{\delta_{trans} \log( \alpha \mu K^\eps)} (\sum_\tau \int |f_\tau|^2 )^{(3/2) + \eps}. $$

To close the induction, we just have to check that the term in parentheses is less than 1.  For sufficiently large $R$, this term is at most $R^{C \delta_{deg} - \delta \eps + C \delta_{trans} }$.  Since $\delta = \epsilon^2$, $\delta_{deg} = \epsilon^4$, and $\delta_{trans} = \epsilon^6$, the exponent of $R$ is negative and the induction closes.

We have now finished carrying out the induction.  It only remains to prove the bound for the tangential terms in Proposition \ref{tangtermbound}.  

\subsection{The estimate for the tangential terms}

In this subsection, we prove Proposition \ref{tangtermbound}.  In other words, we have to prove the following estimate:

$$\int_{B_j \cap W} \BilEft^{3.25} \lesssim R^{O(\delta)} \left(\sum_\tau \int |f_\tau|^2 \right)^{3/2}. $$

Cover $B_j \cap W$ with cubes $Q$ of side length $R^{1/2}$.  For each cube $Q$, we let $\TT_{j, tang, Q}$ be the set of tubes in $\TT_{j, tang}$ that intersect $Q$.  On $Q$, we have

$$ Ef_{\tau, j, tang} = \sum_{T \in \TT_{j, tang, Q}} Ef_{\tau, T} + O(R^{-990} \| f_{\tau} \|_2). $$

The terms of the form $O(R^{-990} \| f_\tau \|_2)$ are always negligible in our calculations, and in this subsection, we will abbreviate them by writing

\begin{equation} \label{Ef_tau}
 Ef_{\tau, j, tang} = \sum_{T \in \TT_{j, tang, Q}} Ef_{\tau, T} + \neglig.
  \end{equation}

Because of the definition of $\TT_{j, tang}$, Definition \ref{deftang}, we claim that all the tubes in $\TT_{j, tang, Q}$ are nearly coplanar.  Since $Q \cap W$ is non-empty, there must be a point $z \in Z(P)$ in the $R^{(1/2)+\delta}$-neighborhood of $Q$.  For any $T \in \TT_{j,tang, Q}$, $z \in 10T \cap 2B_j \cap Z(P)$.  Also, since $P$ is a product of non-singular polynomials, the non-singular points are dense in $Z(P)$, and so we can assume that $z$ is non-singular.  Now by Definition \ref{deftang}, the angle between $v(T)$ and $T_z Z(P)$ is $\le R^{-(1/2) + 2 \delta} \le R^{-(1/2) + O(\delta)}$.

Using this observation and the C\'ordoba $L^4$ argument, we get a bilinear estimate on $Q$:

\begin{lemma} \label{locbilinear} If $\tau_1$ and $\tau_2$ are non-adjacent caps, then

$$ \int_{Q} |Ef_{\tau_1, j, tang}|^2 |Ef_{\tau_2, j, tang}|^2 \lesssim R^{O(\delta)} R^{-1/2} (\sum_{T_1 \in \TT_{j, tang, Q}} \| f_{\tau_1, T_1} \|_2^2 ) (\sum_{T_2 \in \TT_{j, tang, Q}} \| f_{\tau_2, T_2} \|_2^2 ) + \neglig . $$

\end{lemma}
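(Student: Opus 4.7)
\textbf{Proof plan for Lemma \ref{locbilinear}.}

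The plan is to carry out Córdoba's $L^4$ argument, adapted to the coplanar tangential setting on the $R^{1/2}$-cube $Q$. First, using equation \ref{Ef_tau}, I will replace $Ef_{\tau_i,j,tang}$ on $Q$ by $\sum_{T_i \in \TT_{j,tang,Q}} Ef_{\tau_i, T_i}$, so the error terms get absorbed into the final negligible. Next, I introduce a non-negative bump $\eta_Q$ that dominates $\chi_Q$ and whose Fourier transform $\widehat{\eta_Q}$ is concentrated (with rapid decay) in a ball of radius $\sim R^{-1/2}$. Thus it suffices to bound
\[
\sum_{T_1, T_1'} \sum_{T_2, T_2'} \int \big( Ef_{\tau_1, T_1}\overline{Ef_{\tau_1, T_1'}} \big)\, \big(Ef_{\tau_2, T_2}\overline{Ef_{\tau_2, T_2'}}\big)\, \eta_Q,
\]
where $T_i, T_i'$ range over $\TT_{j,tang,Q}$.

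The key step is to apply Plancherel to each quadruple term. Each factor $Ef_{\tau_i, T_i}\overline{Ef_{\tau_i,T_i'}}$ has frequency support essentially concentrated near the difference $\omega_{T_i} - \omega_{T_i'} \in \tau_i - \tau_i'$, so the full integrand's frequency mass is near $\omega_{T_1} - \omega_{T_1'} + \omega_{T_2} - \omega_{T_2'}$. Since $\widehat{\eta_Q}$ decays rapidly off $B(0, R^{-1/2})$, the only non-negligible quadruples are those satisfying
\[
\bigl|(\omega_{T_1} - \omega_{T_1'}) + (\omega_{T_2} - \omega_{T_2'})\bigr| \lesssim R^{-1/2 + O(\delta)}.
\]

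The heart of the argument is then a counting lemma: for each fixed pair $(T_1, T_2)$, there are at most $R^{O(\delta)}$ admissible pairs $(T_1', T_2')$. This is where both the tangency and the non-adjacency hypotheses come in. By Definition \ref{deftang} and the discussion above, every direction $v(T)$ for $T \in \TT_{j,tang,Q}$ lies in a $R^{-1/2+O(\delta)}$-neighborhood of the tangent plane $T_z Z(P)$; since $S$ has non-degenerate second fundamental form, this forces each $\omega_T$ to lie in a $R^{-1/2+O(\delta)}$-neighborhood of a fixed smooth curve $\gamma \subset S$. The non-adjacency of $\tau_1$ and $\tau_2$ then guarantees that the tangent directions to $\gamma$ at $\gamma \cap \tau_1$ and at $\gamma \cap \tau_2$ differ by an angle $\gtrsim K^{-1}$, so the addition map $(\omega_1', \omega_2') \mapsto \omega_1' + \omega_2'$ restricted to these two curve pieces has Jacobian bounded below in terms of $K$. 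A standard thickening argument then shows that only $R^{O(\delta)}$ pairs $(T_1', T_2')$ can land in any given $R^{-1/2+O(\delta)}$-ball.

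Finally, I combine everything: for each admissible quadruple I bound the local contribution by Cauchy--Schwarz and the pointwise wave-packet model $|Ef_{\tau_i, T_i}| \lesssim R^{-1/2}\|f_{\tau_i,T_i}\|_2 \chi_{T_i}$, which gives a factor $|T_i \cap Q| \sim R^{1/2+O(\delta)} \cdot R^{1/2}$ from the intersection of a tube of radius $R^{1/2+\delta}$ with the cube $Q$, and after summing over $Q$-quadruples and applying the counting lemma, the $R^{-1/2}$ gain arises as the ratio of cube volume to tube-tube intersection volume. The main obstacle I expect is the counting lemma above: executing the coplanarity-plus-non-adjacency geometry cleanly, and making sure that the $R^{O(\delta)}$ slop in Definition \ref{deftang} does not degrade into a larger loss. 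Everything else is bookkeeping around Plancherel and the essential orthogonality Properties 4--5 of Proposition \ref{wavepack}.
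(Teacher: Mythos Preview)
Your proposal is correct and follows essentially the same route as the paper: localize with a bump $\eta_Q$, expand into quadruples, use the rapid decay of $\widehat{\eta_Q}$ to obtain the constraint $\omega(T_1)+\omega(T_2)=\omega(T_1')+\omega(T_2')+O(R^{-1/2+\delta})$, and then exploit the coplanarity of the tangential directions together with the non-adjacency of $\tau_1,\tau_2$ to show that only $R^{O(\delta)}$ quadruples survive for each fixed $(T_1,T_2)$. The paper's version of your counting lemma actually proves the sharper conclusion $|\omega(T_i)-\omega(T_i')|\lesssim R^{-1/2+\delta}$ directly, via a convexity argument along the curve in $S$ cut out by the coplanarity condition (rather than phrasing it as a Jacobian lower bound for the addition map), but the geometric content is identical.

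The one substantive difference is in the endgame. After reducing to the near-diagonal quadruples, the paper stays on the Fourier side and invokes the convolution estimate
\[
\int_{\RR^3}\bigl|f_{\tau_1,T_1}\dvol_S*f_{\tau_2,T_2}\dvol_S\bigr|^2\ \lesssim\ R^{-1/2}\,\|f_{\tau_1,T_1}\|_2^2\,\|f_{\tau_2,T_2}\|_2^2,
\]
which uses the $\gtrsim K^{-1}$ transversality of the two caps. You instead return to physical space and use the pointwise model $|Ef_{\tau,T}|\lesssim R^{-1/2}\|f_{\tau,T}\|_2$. That also works, but your volume bookkeeping is garbled: since each tube has radius $R^{1/2+\delta}\ge R^{1/2}$, every $T\in\TT_{j,tang,Q}$ already contains $Q$, so the integral over $Q$ simply contributes $|Q|=R^{3/2}$, and the gain is $R^{-2}\cdot R^{3/2}=R^{-1/2}$; there is no ``ratio of cube volume to tube-tube intersection volume'' at this scale. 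Once that is corrected, your physical-space finish is a perfectly valid alternative to the paper's convolution estimate.
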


\begin{proof}

On $Q$, we have 

$$Ef_{\tau, j, tang} = \sum_{T \in \TT_{j, tang, Q}} Ef_{\tau, T} + \neglig. $$

We let $\eta_Q$ be a smooth bump function which is equal to 1 on $Q$ and with support in $10 Q$.  (We can assume that $| \hat \eta_Q (\omega) | \lesssim \Vol(Q) (1 + |\omega| R^{1/2})^{10^6 \delta^{-1}}. $)  Now we can bound

$$ \int_Q |Ef_{\tau_1, j, tang}|^2 |Ef_{\tau_2, j, tang}|^2 \le
\sum_{T_1, \bar T_1, T_2, \bar T_2 \in \TT_{j, tang, Q}} \int \eta_Q Ef_{\tau_1, T_1} \overline{Ef_{\tau_1, \bar T_1}} Ef_{\tau_2, T_2} \overline{Ef_{\tau_2, \bar T_2}} + \neglig. $$

Each of the summands on the right-hand side we can evaluate with Plancherel, giving

\begin{equation} \label{cordobasum}
 \sum_{T_1, \bar T_1, T_2, \bar T_2 \in \TT_{Q,j, tang}} \int_{\RR^3} (\hat \eta_Q * f_{\tau_1, T_1} \dvol_S * f_{\tau_2, T_2} \dvol_S) \overline{(f_{\tau_1, \bar T_1} \dvol_S * f_{\tau_2, \bar T_2} \dvol_S )} .
 \end{equation}

Only very few of these terms are significant.  For each tube $T$, let $\theta(T)$ denote the cap $\theta$ so that $T \in \TT(\theta)$, and let $\omega(T)$ be the center of $\theta(T)$.  The measure $f_{\tau, T} \dvol_S$ is supported on $3 \theta(T)$, and so the support lies in the $O(R^{1/2})$-neighborhood of $\omega(T)$.  Because of the rapid decay of $\hat \eta_Q$, a term in the sum above is negligible unless

\begin{equation} \label{T_1+T_2=bar}
\omega(T_1) + \omega(T_2) = \omega(\bar T_1) + \omega (\bar T_2) + O(R^{- (1/2) + \delta}). 
\end{equation}

Next we claim that equation \ref{T_1+T_2=bar} forces $\omega(T_1)$ to be $O(R^{-(1/2) + \delta})$ close to $\omega(\bar T_1)$, and the same for $\omega(T_2)$ and $\omega(\bar T_2)$.  We know that $v(T_i)$ and $v(\bar T_i)$ all lie in a common plane $\pi(Q)$.  Recall that $v(T_i)$ is essentially the unit normal vector to $S$ at $\omega(T_i)$.  Therefore, at each point $\vec \omega(T_i), \vec \omega(\bar T_i) \in B^2(1)$, $\nabla h$ satisfies a linear equation:

\begin{equation}
m \cdot \nabla h(\vec \omega) + b = 0,
\end{equation}

\noindent for a vector $m \in \RR^2$ with $|m| \le 1$, and a number $b$ with $|b| \lesssim 1$.

This equation defines a curve in $B^2(1)$.  If $h$ were exactly quadratic, then this curve would be a straight line.  Since $S$ satisfies Conditions \ref{CondSnice}, we know that $S$ is almost quadratic: the Hessian of $S$ obeys $1/2 \le \partial^2 h \le 2$, and the third derivative obeys $| \partial^3 h | \le 10^{-9}$ pointwise.  Therefore, this curve is almost a straight line.  After rotating in the $\omega_1, \omega_2$ plane, it can be given as a graph $\omega_2 = g(\omega_1)$, where $| \nabla g|, | \nabla^2 g|$ are at most $10^{-6}$.  

Next we write $j(\omega_1) = h( \omega_1, g(\omega_1))$.  Because $\partial_1^2 h \ge 1/2$ and $|\nabla g|, |\nabla^2 g|$ are small, it is straightforward to check with the chain rule that 

$$ \partial^2 j \ge 1/4. $$

Let $\omega_1(T_i)$ be the $\omega_1$-coordinate of $\omega(T_i)$.  Equation \ref{T_1+T_2=bar} is equivalent to the following:

\begin{equation} \label{sumsequal}
\omega_1(T_1) + \omega_1(T_2) =  \omega_1(\bar T_1) + \omega_1(\bar T_2) + O(R^{-(1/2) + \delta}). 
\end{equation}

\begin{equation} \label{jsumsequal}
j(\omega_1(T_1)) + j(\omega_1(T_2)) =  j(\omega_1(\bar T_1)) + j(\omega_1(\bar T_2)) + O(R^{-(1/2) + \delta}). 
\end{equation}

Equation \ref{sumsequal} implies that the $\omega_1(T_i)$ and the $\omega_1(\bar T_i)$ have essentially the same midpoint.  Without loss of generality, we can assume that 
$\omega_1(\bar T_1) < \omega_1(T_1) < \omega_1(T_2) < \omega_1(\bar T_2)$.   
Also, since $\omega(T_1)$ lies in (or very near) $\tau_1$, and $\omega(T_2)$ lies in or very near $\tau_2$, $|\omega_1(T_1) - \omega_1(T_2)| \gtrsim K^{-1}$.  Let $I_1$ be the interval $[\omega_1(\bar T_1), \omega_1(T_1)]$ and $I_2$ be the interval $[\omega_1(T_2), \omega_1(\bar T_2)]$.  By Equation \ref{sumsequal}, the lengths of $I_1$ and $I_2$ are equal up to an error of $O(R^{-(1/2)+\delta})$.  
Because of the bound $j'' \ge 1/4$, we see that for any $s_1 \in I_1$ and $s_2 \in I_2$, $j'(s_2) - j'(s_1) \ge (1/4) K^{-1}$.  Using this bound and the fundamental theorem of calculus, we estimate that

$$ |I_1| + |I_2| \lesssim (\int_{I_2} j') - (\int_{I_1} j') + O(R^{-(1/2) + \delta}) = $$

$$ = j(\omega_1(\bar T_2)) - j(\omega_1(T_2)) - j(\omega_1(T_1)) +  j(\omega_1(\bar T_1)) + O(R^{-(1/2) + \delta}) = O(R^{-(1/2) + \delta}). $$

This finishes the proof that $|\omega(T_i) - \omega(\bar T_i)| \lesssim R^{-(1/2)+\delta}$ for $i = 1,2$.  

Next we observe that for each $\theta$, there are only $O(1)$ tubes of $\TT(\theta)$ that intersect $Q$, and so there are only $O(1)$ tubes of $\TT(\theta)$ in $\TT_{j, tang, Q}$.  Therefore, line \ref{cordobasum} is bounded by

\begin{equation} \label{cordobasum2}
 R^{O(\delta)} \sum_{T_1, T_2 \in \TT_{j, tang, Q}} \int | f_{\tau_1, T_1} \dvol_S * f_{\tau_2, T_2} \dvol_S |^2. 
 \end{equation}

Since $\theta(T_1)$ lies in $\tau_1$ and $\theta(T_2)$ lies in $\tau_2$, the angle between the tangent space of $S$ on $\theta(T_1)$ and on $\theta(T_2)$ is $\gtrsim K^{-1}$.  We claim that this angle bound leads to the following inequality: 

\begin{equation} \label{convolbound}
 \int_{\RR^3} | f_{\tau_1, T_1} \dvol_S * f_{\tau_2, T_2} \dvol_S |^2 \lesssim R^{-1/2} \| f_{\tau_1, T_1}  \|_2^2 \|f_{\tau_2, T_2} \|_2^2. 
 \end{equation}

We sketch the proof of the claim.  Let us abbreviate $f_{\tau_1, T_1} \dvol_S$ by $f_1 \dvol_{S_1}$ and $f_{\tau_2, T_2} \dvol_S$ by $f_2 \dvol_{S_2}$, where $S_i$ is a cap containing $\supp f_i$ with radius $\sim R^{-1/2}$.  Because of the angle condition between $S_1$ and $S_2$, we can foliate $S_1$ by curves $\gamma_s$, $s \in [0, R^{-1/2}]$ so that the tangent direction of $\gamma_s$ is quantitatively transverse to the tangent plane of $S_2$, and so that $\dvol_{S_1} = J \cdot \dvol_{\gamma_s} ds$ for a Jacobian factor $J \sim 1$.

We can expand our original function $f_1 \dvol_{S_1} * f_2 \dvol_{S_2}$ as an integral:

$$ f_1 \dvol_{S_1} * f_2 \dvol_{S_2} = \int_0^{R^{-1/2}} (J f_1 \dvol_{\gamma_s} * f_2 \dvol_{S_2} ) ds. $$

Now by Minkowski's inequality and Cauchy-Schwarz,

\begin{equation} \label{minkow}
 \| f_1 \dvol_{S_1} * f_2 \dvol_{S_2}  \|_2^2 \le \left( \int_0^{R^{-1/2}} \| J f_1 \dvol_{\gamma_s} * f_2 \dvol_{S_2} \|_2 ds \right)^2 \le R^{-1/2} \int_0^{R^{-1/2}} \| J f_1 \dvol_{\gamma_s} * f_2 \dvol_{S_2} \|_2^2 ds. 
 \end{equation}

By a change of coordinates argument,

\begin{equation} \label{changecoord}
 \int_{\RR^3} |J f_1 \dvol_{\gamma_s} * f_2 \dvol_{S_2} |^2 \sim \int_{\gamma_s} |f_1|^2 \int_{S_2} |f_2|^2. 
 \end{equation}

Plugging Equation \ref{changecoord} into Equation \ref{minkow}, we get

\begin{equation}  \| f_1 \dvol_{S_1} * f_2 \dvol_{S_2}  \|_2^2 \le
R^{-1/2} \int_0^{R^{-1/2}} (\int_{\gamma_s} |f_1|^2) ds \int_{S_2} |f_2|^2 \lesssim
R^{-1/2} \int_{S_1} |f_1|^2 \int_{S_2} |f_2|^2. 
\end{equation}

This finishes the proof of Equation \ref{convolbound}.  Now using Equation \ref{convolbound} to bound line \ref{cordobasum2}, we see that

$$ \int_{Q} |Ef_{\tau_1, j, tang}|^2 |Ef_{\tau_2, j, tang}|^2 \lesssim R^{O(\delta)} R^{-1/2} (\sum_{T_1 \in \TT_{j, tang, Q}} \| f_{\tau_1, T_1} \|_2^2 ) (\sum_{T_2 \in \TT_{j, tang, Q}} \| f_{\tau_2, T_2} \|_2^2 ) + \neglig . $$

\end{proof}

Next we give an interpretation of Lemma \ref{locbilinear}.  We would like to think of $|Ef_{\tau, T}|$ as well approximated by $\chi_T \| f_{\tau, T} \|_1 \lesssim \chi_T R^{-1/2} \| f_{\tau, T} \|_2$.  Let $S_{\tau, j, tang}$ be a corresponding square function defined as follows:

$$ S_{\tau, j, tang} := \left( \sum_{T \in \TT_{j, tang}} (\chi_T R^{-1/2} \| f_{\tau, T} \|_2)^2 \right)^{1/2}. $$

Lemma \ref{locbilinear} immediately implies that our integral over $Q$ is controlled by the integral with the corresponding square functions:

\begin{equation}
\int_{Q} |Ef_{\tau_1, j, tang}|^2 |Ef_{\tau_2, j, tang}|^2 \lesssim R^{O(\delta)} \int_Q S_{\tau_1, j, tang}^2 S_{\tau_2, j, tang}^2+ \neglig
\end{equation}

Summing over all $Q \subset B_j \cap W$, we get the following bound:

$$ \int_{B_j \cap W} |Ef_{\tau_1, j, tang}|^2 |Ef_{\tau_2, j, tang}|^2 \lesssim R^{O(\delta)} \int_{B_j \cap W} S_{\tau_1, j, tang}^2 S_{\tau_2, j, tang}^2+ \neglig. $$

The last integral involving square functions is easy to bound.  Expanding the definition of square function, we get:

$$\le \sum_{T_1, T_2 \in \TT_{j, tang}} R^{-2} \| f_{\tau_1, T_1} \|_2^2 \|f_{\tau_2, T_2} \|_2^2 \int \chi_{T_1} \chi_{T_2}. $$

Since $T_1$ comes from $\tau_1$ and $T_2$ comes from $\tau_2$, the angle between $v(T_1)$ and $v(T_2)$ is $\gtrsim K^{-1}$, and so the last integral is $\lesssim K R^{3/2}$.  Therefore, the last sum is

$$ \lesssim R^{-1/2} (\sum_{T_1 \in \TT_{j, tang}} \| f_{\tau_1, T_1} \|_2^2) (\sum_{T_2 \in \TT_{j, tang}}  \| f_{\tau_2, T_2} \|_2^2). $$

Using Proposition \ref{wavepack}, the functions $\{ f_{\tau, T} \}_{T \in \TT}$ are almost orthogonal, and we see

$$ \sum_{T \in \TT_{j, tang}} \| f_{\tau, T} \|_2^2 \lesssim \| f_{\tau, j, tang} \|_2^2 + \neglig. $$

Altogether, we have the bound:

\begin{equation*}
\int_{B_j \cap W} |Ef_{\tau_1, j, tang}|^2 |Ef_{\tau_2, j, tang}|^2 \lesssim R^{O(\delta)} R^{-1/2} \| f_{\tau_1, j, tang} \|_2^2 \| f_{\tau_2, j, tang} \|_2^2 + \neglig.
\end{equation*}

This implies the following $L^4$-bound on the bilinear term:

\begin{equation}
\| \BilEft \|_{L^4(B_j \cap W)} \lesssim R^{O(\delta)} R^{-1/8} (\sum_\tau \| f_{\tau, j, tang} \|^2_2)^{1/2} + \neglig. 
\end{equation}

On the other hand we can easily get an $L^2$ bound and then interpolate to get bounds for the $L^p$ norm with any $2 \le p \le 4$.  A standard estimate says that

$$ \| Ef \|_{L^2(B_R)} \lesssim R^{1/2} \| f \|_2. $$

(See for instance Lemma 2.1 in Lecture Notes 7 in \cite{Tnotes}.) 

From this it easily follows that

\begin{equation}
\| \BilEft \|_{L^2(B_j \cap W)} \lesssim R^{1/2} (\sum_\tau \| f_{\tau, j, tang} \|^2_2)^{1/2}
\end{equation}

Interpolating between these by using Holder, we get for all $2 \le p \le 4$, 

\begin{equation} \label{bilinbounda}
\int_{B_j \cap W} | \BilEft|^p \lesssim R^{O(\delta)} R^{\frac{5}{2} - \frac{3}{4}p} (\sum_\tau \| f_{\tau, j, tang} \|_2^2)^{p/2}.
\end{equation}

Next we consider $\| f_{\tau, j, tang} \|_2$.  On the one hand, by Lemma \ref{wavepack2}, we know that $\| f_{\tau, j, tang} \|_2 \lesssim \| f_\tau \|_2$.  We can get a different bound by taking advantage of the small number of directions of tubes in $\TT_{j, tang}$.  
Lemma \ref{tangbound} tells us that $\TT_{j, tang}$ contains tubes in only $R^{O(\delta)} R^{1/2}$ different directions.  Therefore, each function $f_{\tau, j, tang}$ is supported on $R^{O(\delta)} R^{1/2}$ caps $\theta$.  On each cap, Lemma \ref{wavepack2} gives the bound

$$\oint_\theta | f_{\tau, j, tang} |^2 \lesssim \oint_{10 \theta} |f _\tau|^2 \lesssim 1. $$

\noindent Adding the contribution of $R^{(1/2) + O(\delta)}$ caps, we get the bound $\int |f_{\tau, j, tang}|^2 \lesssim R^{O(\delta)} R^{-1/2}$.  Combining these two bounds for $\| f_{\tau, j, tang} \|_2$, we get for $p \ge 3$:

$$ (\sum_\tau \| f_{\tau, j, tang} \|_2^2)^{p/2} \le R^{O(\delta)} R^{\frac{3}{4} - \frac{p}{4}} (\sum_\tau \| f_{\tau, j, tang} \|_2^2)^{3/2}. $$

Substituting this bound into Equation \ref{bilinbounda}, we get:

$$ \int_{B_j \cap W} | \BilEft|^p \lesssim R^{O(\delta)} R^{\frac{13}{4} - p} (\sum_\tau \| f_{\tau} \|_2^2)^{3/2}. $$

Taking $p = 3.25 = 13/4$, this estimate is the bound in Proposition \ref{tangtermbound}.

\section{Estimates about the geometry of tubes and algebraic surfaces}

In this section, we prove Lemmas \ref{transbound} and \ref{tangbound}.   These Lemmas estimate how tubes interact with an algebraic surface.  Each Lemma generalizes a simple statement about lines intersecting an algebraic surface.

A line can transversally intersect a degree $D$ surface $Z(P)$ in at most $D$ points.  Lemma \ref{transbound} says that a tube $T$ can belong to $\TT_{j, trans}$ for at most $\Poly(D)$ values of $j$: there are $\le \Poly(D)$ balls $B_j$ where $T$ passes through $W$ transversally.

The directions of the lines in an algebraic surface $Z(P)$ all lie in an algebraic curve.  Let $\mathbb{RP}^2$  denote the points at infinity in $\RR^3$ -- also the set of directions of lines in $\RR^3$.  The projective closure of $Z(P)$ intersects $\mathbb{RP}^2$ in an algebraic curve.  If a line $l$ lies in $Z(P)$, then the direction of the line must lie in this curve in $\mathbb{RP}^2$.  Lemma \ref{tangbound} says that the tubes of $\TT_{j, tang}$ contain tubes from at most roughly $R^{1/2}$ of the $R$ caps $\theta$.  If $S$ is a sphere, this is roughly the number of caps that would intersect an algebraic curve of degree $D$ in $S$.

Transferring ideas from lines to tubes is sometimes straightforward and sometimes hard.  Some of the methods that we use here come from the paper \cite{Gu2}.

\subsection{Bounding transversal intersections}

We begin with the estimate for transversal tubes, Lemma \ref{transbound}.  Suppose that $T \in \TT$.  Recall from Definition \ref{deftrans} that if $T \in \TT_{j, trans}$, then there is a non-singular point $z \in 10 T \cap 2 B_j \cap Z(P)$ so that $\Angle( v(T), T_z Z) > R^{-(1/2) + 2 \delta}$.  We have to prove that any tube $T \in \TT$ lies in $\TT_{j, tang}$ for $\le \Poly(D)$ values of $j$.  We state a slightly more general result.  

\begin{lemma}  \label{geomtransbound} Suppose that 

\begin{itemize}

\item $T$ is a finite cylinder in $\RR^3$ with radius $\rho$ and arbitrary length.  

\item $a \in (0, 1/10)$ denotes an angle.  

\item $T$ is subdivided into tube segments of length $\ge \rho a^{-1}$.

\item $Q$ is a non-singular polynomial of degree $D$.

\item $Z_{\ge a}(Q) := \{ z \in Z(Q) | \Angle(v(T), T_z Z(Q)) \ge a \}. $

\end{itemize}

Then $Z_{\ge a}(Q) \cap T$ is contained in $\lesssim D^3$ of the tube segments of $T$.

\end{lemma}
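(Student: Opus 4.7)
The plan is to bound the number of tube segments meeting $Z_{\ge a}(Q)\cap T$ in two stages: first control the number of connected components of this semi-algebraic set, and then show that each component is contained in $O(1)$ consecutive segments.

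After a rigid motion I may assume $v(T) = e_3$ and $T = \{(w,t) \in \RR^2 \times \RR : |w|\le\rho,\ t\in I\}$. In these coordinates the condition $\Angle(v(T), T_zZ(Q))\ge a$ is equivalent to the polynomial inequality $|\partial_3 Q(z)|^2 \ge (\sin^2 a)\,|\nabla Q(z)|^2$, so $Z_{\ge a}(Q)\cap T$ is a semi-algebraic set cut out by polynomial equations and inequalities of degree $O(D)$. Since $Q$ is non-singular and $\partial_3 Q\neq 0$ on $Z_{\ge a}(Q)$, the implicit function theorem represents $Z(Q)$ locally as a graph $t = f(w)$, and the angle bound becomes the Lipschitz estimate $|\nabla f|\le\cot a\le 1/a$. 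A Milnor--Thom-type bound on the sum of Betti numbers of a semi-algebraic set in $\RR^3$ cut out by polynomials of degree $O(D)$ then yields that $Z_{\ge a}(Q)\cap T$ has $\lesssim D^3$ connected components $C_1,\dots,C_M$.

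It remains to show that each $C_k$ has $t$-extent at most $2\rho/a$; granting this, the image of $C_k$ under projection to the axis of $T$ is an interval of length $\le 2\rho/a$ which meets at most $3$ consecutive tube segments (each of length $\ge \rho a^{-1}$), and summing over the $M\lesssim D^3$ components yields the desired bound. To prove the $t$-extent estimate, I would argue that the cross-sectional projection $\pi_w : C_k \to \{|w|\le\rho\}$ is a local diffeomorphism on the interior of $C_k$ whose inverse has Lipschitz constant $\le 1/a$ in the $t$-component. If $\pi_w|_{C_k}$ is single-sheeted, then $C_k$ is literally the graph $t = f(w)$ over $\pi_w(C_k)\subset\{|w|\le\rho\}$, so its $t$-extent is at most $\mathrm{diam}(\pi_w(C_k))/a\le 2\rho/a$.

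The main obstacle is precisely this single-sheetedness: for a general smooth surface a connected 2-manifold locally presented as a Lipschitz graph over the cross-section can wind helicoidally within the cylinder (think of a helicoid) and have arbitrarily large $t$-extent. Ruling this out genuinely requires $Z(Q)$ to be algebraic and non-singular. My approach to this step is to use that for each $w$ in the cross-section disk, the fiber $\ell_w\cap Z(Q)$ has at most $D$ points --- unless $\ell_w\subset Z(Q)$, in which case $v(T)\in T_zZ(Q)$ along all of $\ell_w$ and so $\ell_w\cap Z_{\ge a}(Q) = \emptyset$. Thus the restriction of $\pi_w$ to any single connected component $C_k$ has sheet multiplicity at most $D$; I expect that either this multiplicity factor is absorbed by a slightly sharper application of the Betti-number bound (keeping the component count at $\lesssim D^3$), or the sheets can be separated into pieces each of which genuinely is a graph, each with $t$-extent $\le 2\rho/a$, and the total count across pieces still comes out to $\lesssim D^3$.
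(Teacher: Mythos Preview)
Your proposal has a genuine gap exactly where you flag it: the single-sheetedness/winding issue is identified but not resolved. Saying ``I expect that either this multiplicity factor is absorbed by a slightly sharper application of the Betti-number bound \dots\ or the sheets can be separated into pieces \dots'' is not an argument. Concretely, the Milnor--Thom bound you quote gives $\lesssim D^3$ connected components of the semi-algebraic set $Z_{\ge a}(Q)\cap T$, but nothing in that bound prevents a single component from winding through the cylinder many times; your observation that each vertical fiber meets $Z(Q)$ in at most $D$ points only caps the sheet multiplicity at $D$, which at best would yield $\lesssim D^4$ tube segments if you split each component into single-sheeted pieces. You have not shown how to sharpen either bound, and in fact neither alternative you suggest comes with a mechanism. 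The Lipschitz bound $|\nabla f|\le 1/a$ controls the $t$-extent of a \emph{graph}, not of a connected component that may consist of several sheets joined at the lateral boundary of $T$ or along $Z_{=a}(Q)$.

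The paper's proof takes a quite different route that sidesteps this issue. Rather than counting components of $Z_{\ge a}(Q)\cap T$, it first proves an analogous lemma for algebraic \emph{curves} in the tube (where the extent problem is one-dimensional and tractable), and then reduces the surface case to the curve case. Specifically, it introduces the level curve $Z_{=b}(Q)$ for a generic $b$ near $a$, together with curves $\Tan_w=Z(Q,\nabla Q\cdot w)$ for a fixed $1/1000$-net of directions $w$; each of these is a transverse complete intersection of degree $O(D)$, and the curve lemma confines each to $\lesssim D^3$ segments. Segments near these curves are declared ``bad''. On any piece $Z_{\mathrm{comp}}$ of $Z(Q)$ in the remaining ``good'' segments, the tangent plane is pinned to vary by at most $1/100$ (since $Z_{\mathrm{comp}}$ avoids every $\Tan_w$), which is precisely what rules out winding and makes $Z_{\mathrm{comp}}$ behave like a genuine graph over the cross-section. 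An intersection-count argument with lines parallel to $v(T)$ then shows there are at most $O(D)$ such patches. The crux you are missing is this tangent-plane rigidity: without controlling how $T_zZ(Q)$ rotates along a component, the Lipschitz bound alone cannot bound its axial extent.
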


The reader may want to imagine $\rho =1$ and $a = 1/10$.  (The general case can be reduced to this case by a change of coordinates.  On the other hand, it is just as easy to prove the lemma for all $\rho$ and $a$ as stated, so we give the proof in the general case.)

To see that this Lemma implies Lemma \ref{transbound}, we first note that $P$ is a product of non-singular irreducible polynomials.  For each of these polynomials, we apply the Lemma above to $10T$, taking $\rho = 10 R^{(1/2)+\delta}$, $a = R^{-(1/2)+2 \delta}$, and the length of the segments $\rho a^{-1} = 10 R^{1 - \delta}$.  So each segment intersects $O(1)$ balls $B_j$.  (This step motivates the choice of angle $R^{-(1/2) + 2 \delta}$ in the definitions of $\TT_{j, tang}$ and $\TT_{j, trans}$.)

There is probably a version of this lemma in any number of dimensions, but we will focus on 3 dimensions.  In fact, we'll warm up by proving a 2-dimensional version of the lemma, and then go on to the more difficult 3-dimensional case.  We begin with a lemma that holds in any number of dimensions.

If $T$ is a tube in $\RR^n$ in direction $v(T)$, and $Q$ is a non-singular polynomial on $\RR^n$, then we define $Z_{=a}(Q)$ as follows:

$$ Z_{=a}(Q) := \{ z \in Z(Q) | \Angle(v(T), T_z Z(Q)) = a \}. $$

We defined earlier a non-singular polynomial.  Recall that we said that a polynomial $P$ on $\RR^n$ is non-singular if for each point $x \in Z(P)$, $\nabla P(x) \not= 0$.  There is an analogous definition for varieties defined by several polynoimals.  Suppose that $Q_1, ..., Q_k$ are polynomials on $\RR^n$.  We say that $Z(Q_1, ..., Q_k)$ is a transverse complete intersection if for each point $x \in Z(Q_1, ..., Q_k)$, $\nabla Q_1(x), ..., \nabla Q_k(x)$ are linearly independent.  In particular, a transverse complete intersection is always a smooth submanifold of dimension $n-k$.  

\begin{lemma} \label{Z_=a} Suppose $Q$ is a non-singular polynomial on $\RR^n$.  For any $a$, $Z_{=a}(Q)$ is a variety $Z(Q, Q_1)$
where $Q_1$ is a polynomial (depending on $Q$ and $a$) of degree $\lesssim \Deg(Q)$.  For almost every $a$, $Z(Q, Q_1)$ is a transverse complete intersection.  \end{lemma}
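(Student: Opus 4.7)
The plan is to write $Q_1$ down explicitly from the angle condition and then invoke Sard's theorem to get the transversality for almost every $a$. Let $v = v(T)$ be a unit vector in the direction of $T$. Since $Q$ is non-singular, at each $z \in Z(Q)$ the tangent space $T_z Z(Q)$ is the orthogonal complement of $\nabla Q(z) \neq 0$, so
$$\sin\bigl(\Angle(v, T_z Z(Q))\bigr) = \frac{|v \cdot \nabla Q(z)|}{|\nabla Q(z)|}.$$
Squaring the condition ``angle equals $a$'' yields the polynomial equation
$$Q_1(z) \; := \; \bigl(v \cdot \nabla Q(z)\bigr)^2 \; - \; \sin^2 a \cdot |\nabla Q(z)|^2 \; = \; 0.$$
Each partial derivative of $Q$ has degree at most $\Deg(Q) - 1$, so $Q_1$ has degree at most $2(\Deg Q - 1) \lesssim \Deg Q$. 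By construction, $Z_{=a}(Q) = Z(Q) \cap \{ Q_1 = 0 \} = Z(Q, Q_1)$, which handles the first half of the lemma.

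For the transversality claim, I would apply Sard's theorem to the smooth function
$$\Psi : Z(Q) \to [0,1], \qquad \Psi(z) := \frac{(v \cdot \nabla Q(z))^2}{|\nabla Q(z)|^2},$$
which is well-defined and $C^\infty$ because $\nabla Q$ is nonvanishing on $Z(Q)$. Sard's theorem tells us that the set of critical values of $\Psi$ has measure zero in $\RR$, so for almost every $a$ the value $\sin^2 a$ is regular: at every point of the level set $\Psi^{-1}(\sin^2 a) = Z_{=a}(Q)$, the intrinsic differential $d\Psi$ (along $Z(Q)$) is nonzero.

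It remains to convert this intrinsic nondegeneracy into linear independence of the two ambient gradients $\nabla Q, \nabla Q_1$. A short computation shows that at a point $z \in Z_{=a}(Q)$ one has the identity
$$\nabla Q_1(z) \; = \; |\nabla Q(z)|^2 \, \nabla \Psi(z),$$
where $\nabla \Psi$ denotes the ambient $\RR^n$-gradient of any smooth extension of $\Psi$; the terms that depend on the choice of extension drop out precisely because $(v \cdot \nabla Q)^2 = \sin^2 a \cdot |\nabla Q|^2$ on $Z_{=a}(Q)$. Since $|\nabla Q(z)|^2 \neq 0$, the intrinsic derivative $d\Psi|_{T_z Z(Q)}$ is nonzero if and only if the orthogonal projection of $\nabla Q_1(z)$ onto $T_z Z(Q)$ is nonzero, i.e. $\nabla Q_1(z)$ is not a scalar multiple of $\nabla Q(z)$. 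This is exactly the statement that $Z(Q, Q_1)$ is a transverse complete intersection. The one mild subtlety, and the place I would take the most care, is verifying the identity relating $\nabla Q_1$ and $|\nabla Q|^2 \nabla \Psi$ on the level set, since without the cancellation from $Q_1 = 0$ the derivatives of $|\nabla Q|^{-2}$ would contribute extra terms.
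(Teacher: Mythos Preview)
Your proof is correct and follows essentially the same route as the paper: the same explicit polynomial $Q_1 = (v\cdot\nabla Q)^2 - \sin^2 a\,|\nabla Q|^2$, the same auxiliary function $\Psi$ (called $f$ in the paper), Sard's theorem, and the same identity $\nabla Q_1 = |\nabla Q|^2\,\nabla\Psi$ on the level set to translate intrinsic regularity into linear independence of the ambient gradients. The ``mild subtlety'' you flag is exactly the point the paper also isolates.
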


\begin{proof}
Suppose $x \in Z(Q)$.  Since $Q$ is non-singular, $\nabla Q(x) \not= 0$.  The unit normal to $Z(Q)$ at $x$ is given by $\pm \frac{\nabla Q} {|\nabla Q|}$.  Therefore, $x \in Z_{=a}(Q)$ if and only if

$$ \frac{\nabla Q} {|\nabla Q|} \cdot v(T) = \pm \sin a. $$

This holds if and only if

$$0 =(\nabla Q \cdot v(T))^2 -  \sin^2 (a) |\nabla Q|^2  =: Q_1. $$

We see that $Q_1$ is a polynomial and that $Z_{=a}(Q) = Z(Q, Q_1)$.  

Next we want to see that for almost every $a$, for each point $x \in Z_{=a}(Q)$, $\nabla Q$ and $\nabla Q_1$ are linearly independent.

Define a smooth function $f: Z(Q) \rightarrow \RR$ by

$$ f = \frac{(\nabla Q \cdot v(T) )^2}{|\nabla Q|^2}. $$

We note that $|\nabla Q|$ never vanishes on $Z(Q)$, so $f$ is $C^\infty$ smooth.  Also $f(x) = \sin^2(a)$ if and only if $x \in Z_{=a}(Q)$.  

Fix any value of $a$.  If $x_0 \in Z_{=a}(Q)$, and $Q_1$ is defined as above, then we claim that $\nabla Q$ and $\nabla Q_1$ are linearly dependent at $x_0$ if and only if $\nabla f(x_0) = 0$.  We can see this as follows.  Along the manifold $Z(Q)$, the polynomial  $Q_1$ is equal to

$$Q_1 (x) = |\nabla Q|^2 ( f(x) - \sin^2(a) ). $$

\noindent At the point $x_0$, $f(x_0) - \sin^2 (a) = 0$.  So when we differentiate, we see that 

$$ \nabla Q_1(x_0) = |\nabla Q(x_0)|^2 \nabla f(x_0). $$

\noindent We have $\nabla Q(x_0), \nabla Q_1(x_0)$ linearly independent as vectors in $\RR^n$ if and only the restriction of $\nabla Q_1(x_0)$ to $T_{x_0} Z(Q)$ is non-zero, if and only if $\nabla f(x_0) \not= 0$.

Now by Sard's theorem, the set of critical values of $f$ has measure zero.  Therefore, for almost every $a$, $\sin^2(a)$ is a regular value of $f$.  For any such $a$, $\nabla Q$ and $\nabla Q_1$ are linearly independent at every point of $Z_{=a}(Q) = Z(Q, Q_1)$.  
\end{proof}

In this section we will also use Bezout's theorem.  We use the following version -- see Theorem 5.2 in \cite{bezout?} for a clean and well-written proof.

\begin{theorem} \label{bezout} If $Z(Q_1, ..., Q_n)$ is a transverse complete intersection in $\RR^n$, then the number of points in $Z(Q_1, ..., Q_n)$ is at most $\Deg(Q_1) ... \Deg(Q_n)$.  
\end{theorem}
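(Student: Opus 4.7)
The plan is to reduce the real transversal statement to the classical complex projective Bezout theorem via a generic perturbation argument that upgrades real transversality to complex transversality.

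First I would perturb by setting $\tilde Q_j := Q_j - \eps R_j$ where each $R_j$ is a generic real polynomial of degree $\Deg(Q_j)$ and $\eps > 0$ is small. Applying Sard's theorem to the appropriate evaluation map on the parameter space of $(R_1,\dots,R_n)$, for a generic choice of perturbation the homogenizations $\widetilde V_j \subset \mathbb{CP}^n$ (with respect to a new variable $x_0$) are smooth hypersurfaces, their common intersection is a transverse complete intersection of complex dimension zero, and no intersection point lies on the hyperplane at infinity $\{x_0 = 0\}$. This step is essentially the same Sard argument used earlier in the excerpt to produce non-singular polynomials.

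Second, I would invoke the complex projective Bezout theorem to conclude $|\widetilde V_1 \cap \cdots \cap \widetilde V_n| = \Deg(Q_1) \cdots \Deg(Q_n)$: transversality forces every intersection multiplicity to equal $1$, so the geometric count equals the product of degrees. The classical proof proceeds by induction on $n$ and uses that a degree-$d$ hypersurface in $\mathbb{CP}^n$ meets a generic projective line in exactly $d$ points (by the fundamental theorem of algebra); the induction propagates through codimensions via Hilbert polynomials or a generic hyperplane section argument.

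Third, I would transfer the bound back to the original $Q_1,\dots,Q_n$. At each point $p \in Z(Q_1,\dots,Q_n)$ the real Jacobian with rows $\nabla Q_j(p)$ is invertible by the transverse complete intersection hypothesis, so the implicit function theorem produces, for all sufficiently small $\eps$, a unique real solution $\tilde p(\eps)$ of the perturbed system near $p$, varying continuously with $\eps$. Distinct $p$ yield distinct $\tilde p(\eps)$ for sufficiently small $\eps$, so the original real zero set injects into the perturbed real zero set, giving the chain
\[ |Z(Q_1,\dots,Q_n)| \le |Z_\RR(\tilde Q_1,\dots,\tilde Q_n)| \le |\widetilde V_1 \cap \cdots \cap \widetilde V_n| = \Deg(Q_1)\cdots \Deg(Q_n). \]

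The main obstacle is the complex projective Bezout theorem itself, specifically the fact that the "degree" of an intermediate complete intersection multiplies correctly upon intersection with a new hypersurface. In the transverse setting one can sidestep full intersection multiplicity theory by defining the degree of $\widetilde V_1 \cap \cdots \cap \widetilde V_k$ as the number of points of its intersection with a generic $(n{-}k)$-plane, but one must still verify that a generic section of this variety by a hypersurface of degree $d_{k+1}$ produces a complete intersection of the expected degree, which is the crux of the induction and the reason the paper simply cites a textbook proof.
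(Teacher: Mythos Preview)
The paper does not give its own proof of this statement: it simply records the theorem and refers to Theorem~5.2 of \cite{bezout?} for a proof. So there is no ``paper's own proof'' to compare against, and your proposal is necessarily a different route in the trivial sense that you are supplying an argument where the paper supplies a citation.

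Your outline is a standard and essentially correct way to derive the real-transverse version of B\'ezout from the classical complex projective statement. Two small points are worth tightening. First, you should note explicitly that $Z(Q_1,\dots,Q_n)$ is \emph{finite} before invoking the implicit function theorem uniformly: it is a discrete real algebraic set (the Jacobian hypothesis makes every point isolated), and a real algebraic set has finitely many connected components, so it is finite; without this you cannot choose a single $\eps$ that works at every $p$. Second, the order of quantifiers between ``$\eps$ small'' and ``$(R_1,\dots,R_n)$ generic'' needs a sentence of care: the set of parameters $(\eps,R_1,\dots,R_n)$ for which the homogenized system fails to be a transverse complete intersection in $\mathbb{CP}^n$ avoiding $\{x_0=0\}$ is contained in a proper algebraic subvariety of parameter space, so for a fixed generic $(R_1,\dots,R_n)$ the bad $\eps$ form a finite set, and you can pick $\eps$ arbitrarily small and good. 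With those two clarifications your three-step reduction (generic perturbation to a complex-transverse system, complex projective B\'ezout with all multiplicities equal to one, implicit-function-theorem injection of original real zeros into perturbed real zeros) goes through. Your final paragraph correctly identifies that the real content is the projective B\'ezout theorem itself, which is exactly why the paper outsources the whole statement to a reference.
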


Now we can prove a 2-dimensional version of Lemma \ref{geomtransbound}.

\begin{lemma}  \label{geomtransbound2d} Suppose that 

\begin{itemize}

\item $T$ is a rectangle in $\RR^2$ with width $2 \rho$ and arbitrary length.  

\item $a \in (0, 1/10)$ denotes an angle.  

\item $T$ is subdivided into rectangular segments of length $\ge \rho a^{-1}$.

\item $Q$ is a non-singular polynomial of degree $D$.

\item $Z_{\ge a}(Q) := \{ z \in Z(Q) | \Angle(v(T), T_z Z(Q)) \ge a \}. $

\end{itemize}

Then $Z_{\ge a}(Q) \cap T$ is contained in $\lesssim D^2$ of the tube segments of $T$.

\end{lemma}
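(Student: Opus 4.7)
My strategy is to rotate to coordinates with $v(T)=e_1$, associate to each bad segment a smooth arc of $Z(Q)\cap T$, bound the length of this arc using the angle condition, and then count arcs using Bezout. After rotation, $T=I\times[-\rho,\rho]$ and each segment of $T$ becomes a sub-rectangle of length $\ge\rho/a$ in the $x_1$-direction. Using Lemma \ref{Z_=a} together with Sard's theorem, choose a regular value $a'\in(a/4,a/2)$ so that $Z_{=a'}(Q)=Z(Q,Q_1)$ is a transverse complete intersection with $\deg Q_1\lesssim D$. For each bad segment $S_i$, fix some $z_i\in S_i\cap Z_{\ge a}(Q)$ and let $\gamma_i\subset Z(Q)\cap T$ be the maximal connected smooth arc through $z_i$ on which $\Angle(v(T),T_zZ(Q))\ge a'$. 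Because $Q$ is non-singular, $Z(Q)$ is a smooth $1$-manifold, so $\gamma_i$ is a well-defined smooth arc whose endpoints lie in $\partial T\cup Z_{=a'}(Q)$.

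Next I would argue that $\gamma_i$ is short. Parametrising by arc length $\sigma$, the unit tangent has $|(\gamma_i'(\sigma))_{x_2}|\ge\sin a'\gtrsim a$ throughout $\gamma_i$; by continuity the sign of this $x_2$-component is constant, so $x_2\circ\gamma_i$ is strictly monotone at speed $\gtrsim a$. Since $x_2\in[-\rho,\rho]$, this forces $\mathrm{length}(\gamma_i)\lesssim \rho/a$. Therefore $\gamma_i$ meets at most $O(1)$ consecutive segments of $T$, and in particular $S_i$ is one of them.

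It remains to count maximal arcs. No $\gamma_i$ is a closed loop: on a closed curve in $T$ the coordinate $x_2$ attains a maximum and a minimum where the tangent is horizontal, forcing angle $0<a'$. Hence each arc has exactly two endpoints, each in either $\partial T\cap Z(Q)$ or $Z_{=a'}(Q)$. The boundary $\partial T$ is a union of four line segments, so by the $1$-dimensional case of Bezout, $|\partial T\cap Z(Q)|=O(D)$. By the transverse-complete-intersection property of $a'$ and Theorem \ref{bezout}, $|Z_{=a'}(Q)|\le \deg Q\cdot\deg Q_1\lesssim D^2$. So there are $\lesssim D^2$ maximal arcs, each contributing $O(1)$ bad segments, giving $\lesssim D^2$ bad segments as required.

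The main obstacle I expect is getting a clean endpoint count: non-singularity of $Q$ is what rules out singular points on $Z(Q)$, and the regular-value choice of $a'$ is precisely what turns $Z_{=a'}(Q)$ into a transverse complete intersection where Bezout gives the tight $D^2$ bound. Every other step is a direct arc-length computation or dimensional Bezout. The forthcoming $3$-dimensional Lemma \ref{geomtransbound} should follow the same template, but with $\gamma_i$ replaced by $2$-dimensional pieces of $Z(Q)$ inside the cylinder; the extra factor of $D$ in the bound $D^3$ should come from one additional Bezout-style count needed to slice the higher-dimensional tangential set.
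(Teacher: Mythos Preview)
Your proof is correct and follows essentially the same line as the paper's: pick a regular level $a'$ (the paper uses $b\in[(9/10)a,a]$) so that $Z_{=a'}(Q)$ is a transverse complete intersection of $\lesssim D^2$ points, show that each connected arc of $\{z\in Z(Q)\cap T:\Angle\ge a'\}$ has $x_2$ varying monotonically at rate $\gtrsim a$ and hence spans $O(1)$ segments, and then count arcs by their endpoints via Bezout. Your arc-length parametrisation is a mild streamlining of the paper's graph parametrisation $x_2=h(x_1)$: the paper must separately exclude the ``vertical'' points where $\partial_2 Q=0$ (another $\lesssim D^2$ points, after a small perturbation of $T$) in order to make the graph description valid, whereas your arc-length approach sidesteps this entirely. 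The paper also organises the final count slightly differently---components that span the full width of $T$ are counted by intersecting with a generic horizontal line $x_2=h$ (at most $D$ such components), rather than by Bezout on the four sides of $\partial T$---but this is cosmetic and both routes land on the same $\lesssim D^2$ bound.
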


\begin{proof} Using Lemma \ref{Z_=a}, we choose a generic $b \in [(9/10) a, a]$ so that $Z_{=b}$ is a transverse complete intersection.  Since we are working in 2 dimensions, $Z_{=b}$ is a set of $\lesssim D^2$ points.

We choose coordinates $x_1, x_2$ so that $T$ is defined by $|x_2| \le \rho$.  The $x_1$-axis is parallel to the long side of $T$, so $v(T) = \partial_1$.  We say that a point $x \in Z(Q)$ is vertical if $T_x Z(Q)$ is parallel to the $x_2$-axis, or equivalently if $\partial_2 Q = 0$.  By making a tiny perturbation of $T$, we can assume that $Z(Q, \partial_2 Q)$ is also a transverse complete intersection, and so consists of $\le D^2$ points.

We divide $2T$ into tube segments corresponding to the original tube segments.  We label a tube segment bad if it lies within $10 \rho a^{-1}$ of a vertical point or a point of $Z_{=b}$.  The total number of bad segments is $\lesssim D^2$.

Suppose that $x \in Z_{\ge a}(Q) \cap T$ and that $x$ is not in any of the bad tube segments.  We consider the connected component of $Z(Q) \cap (2 T \setminus \textrm{ bad segments})$ that contains $x$ -- call this component $Z_{comp}$.  The curve $Z_{comp}$ contains no vertical points.  Therefore, it is defined as a graph $x_2 = h(x_1)$ for a smooth function $h: I \rightarrow \RR$ on some interval $I$.  Also, $Z_{comp}$ does not contain any points of $Z_{=b}(Q)$.  Since $x \in Z_{\ge a}(Q) \subset Z_{\ge b}(Q)$, we see that $Z_{comp} \subset Z_{> b} (Q)$.  Therefore, $|\nabla h| \ge \sin b \ge (1/2) a$ at every point of the interval $I$.  Since $\nabla h$ is continuous, its sign must be constant.  Therefore the length of $I$ is $\le 10 \rho a^{-1}$, and $Z_{comp}$ can be covered by $\lesssim 1$ tube segments.  

It remains to prove that all these components $Z_{comp}$ can be covered by $\lesssim D^2$ tube segments.  Some of the components $Z_{comp}$ have $\partial Z_{comp}$ that intersects the boundary of a bad tube segment.  Since there are $\lesssim D^2$ bad tube segments, all such components can be covered by $\lesssim D^2$ tube segments.  For other components $\partial Z_{comp}$ does not intersect the boundary of a bad tube segment.  In this case, the two boundary points of $Z_{comp}$ must lie on the top and bottom of the rectangle $T$.  In this case, $Z_{comp}$ ``goes across'' the rectangle $T$.  For $|h| < \rho$, the line $x_2 = h$ must intersect $Z_{comp}$, and for almost every $h$, it must intersect $Z_{comp}$ transversally.  Since any line has at most $D$ transverse intersections with $Z(Q)$, the number of such components is at most $D$. \end{proof}

Our 3-dimensional result, Lemma \ref{geomtransbound}, is more complicated than this 2-dimensional model.  In 2 dimensions, $Z_{=b}(Q)$ was a set of points of controlled cardinality.  But in 3 dimensions, $Z_{=b}(Q)$ will be a curve.  The next step in approaching our 3-dimensional Lemma is to prove a result about algebraic curves in a 3-dimensional tube.  We will use this result to control the curve $Z_{=b}(Q)$ (and some other curves).

If $Y = Z(Q_1, Q_2) \subset \RR^3$ is a transverse complete intersection, then we define

$$ Y_{\ge a} := \{ y \in Y | \Angle (v(T), T_y Y) \ge a \}. $$

\begin{lemma}  \label{geomtransboundcurve3d} Suppose that 

\begin{itemize}

\item $T$ is a finite cylinder in $\RR^3$ with radius $\rho$ and arbitrary length.  

\item $a \in (0, 1/10)$ denotes an angle.  

\item $T$ is subdivided into tube segments of length $\ge \rho a^{-1}$.

\item $Y = Z(Q_1, Q_2)$ is a transverse complete intersection.

\item $Q_1$ and $Q_2$ have degree at most $D$.

\end{itemize}

Then $Y_{\ge a} \cap T$ is contained in $\lesssim D^3$ of the tube segments of $T$.

\end{lemma}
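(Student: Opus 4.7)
My plan is to mimic the two-dimensional Lemma \ref{geomtransbound2d}, with the curve $Y = Z(Q_1, Q_2)$ playing the role of the plane curve $Z(Q)$. After rotating coordinates I may take $v(T) = \partial_1$ and $T = \{x \in \RR^3 : x_2^2 + x_3^2 \le \rho^2\}$ (intersected with an interval in $x_1$). At a non-singular point $y \in Y$, the tangent direction is parallel to $w(y) := \nabla Q_1(y) \times \nabla Q_2(y)$, whose components $w_1, w_2, w_3$ are polynomials of degree $\lesssim D$. The condition $\Angle(v(T), T_y Y) = b$ can be rewritten as a polynomial equation of degree $\lesssim D$ in $w$, exactly as in Lemma \ref{Z_=a}. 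I will therefore consider five subsets of $Y$, each defined by one additional polynomial condition of degree $\lesssim D$ on top of $Q_1 = Q_2 = 0$: the vertical points $V_1 = \{w_1 = 0\}$, the sets $V_2 = \{w_2 = 0\}$ and $V_3 = \{w_3 = 0\}$, the set $V_{23} = \{w_2^2 = w_3^2\}$, and $Y_{=b}$ for a generic value $b \in [(9/10)a, a]$. By Sard's theorem, after a negligible perturbation of the coordinate frame and a generic choice of $b$, each of these is a transverse complete intersection of three polynomials of degree $\lesssim D$, so by B\'ezout (Theorem \ref{bezout}) each has cardinality $\lesssim D^3$.

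Next I will call a tube segment \emph{bad} if it lies within $10 \rho a^{-1}$ of any point in $V_1 \cup V_2 \cup V_3 \cup V_{23} \cup Y_{=b}$; this produces $\lesssim D^3$ bad segments. For a good segment meeting $Y_{\ge a} \cap T$ at a point $y$, let $Y_{\mathrm{comp}}$ be the connected component of $Y \cap (2T \setminus \text{bad segments})$ containing $y$. Since $Y_{\mathrm{comp}}$ avoids $V_1$ and $Y_{=b}$, continuity together with the hypothesis at $y$ forces the angle between $T_\bullet Y_{\mathrm{comp}}$ and $\partial_1$ to stay strictly larger than $b$, and $Y_{\mathrm{comp}}$ can be written as a graph $(x_1, h_2(x_1), h_3(x_1))$ on some interval $I$ with $h_2'^2 + h_3'^2 \ge \tan^2 b \gtrsim a^2$. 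Avoiding $V_2$ and $V_3$ makes both $h_2$ and $h_3$ strictly monotone on $I$, and avoiding $V_{23}$ fixes which of $|h_2'|, |h_3'|$ dominates throughout $I$; whichever one it is, that derivative is $\gtrsim a$ everywhere. Combined with $|h_j(x_1)| \le 2\rho$ on $I$, this yields $|I| \lesssim \rho a^{-1}$, so $Y_{\mathrm{comp}}$ is covered by $O(1)$ tube segments of $T$.

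Finally I count components. Each endpoint of a component lies either on $\partial(2T)$ or on the boundary of a bad segment. The bad-segment boundaries contribute $\lesssim D^3$ endpoints. For endpoints on $\partial(2T)$, B\'ezout applied to $Y$ together with the lateral cylinder $\{x_2^2 + x_3^2 = 4\rho^2\}$ (a surface of degree $2$) or with each end-disk (a plane) gives $\lesssim D^2$ points. Hence the total number of components is $\lesssim D^3$, and each is covered by $O(1)$ segments, yielding the desired bound. I expect the main technical obstacle to be arranging that all five polynomial conditions above simultaneously define transverse complete intersections; this should be handled by combining a single application of Sard's theorem for the generic parameter $b$ with a small generic perturbation of the orthonormal coordinate frame adapted to $T$, in the style of Lemma \ref{Z_=a}.
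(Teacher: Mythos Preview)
Your plan is essentially the paper's own argument: isolate finitely many algebraic ``bad'' loci on $Y$ (the paper uses $Y_{=b}$ together with $Y_{e_i^\perp}=\{w_i=0\}$ for $i=1,2,3$), invoke B\'ezout to bound each by $\lesssim D^3$ points, call nearby segments bad, show every good component is a short monotone graph, and then count.  Two small points are worth flagging.

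First, your auxiliary set $V_{23}=\{w_2^2=w_3^2\}$ is unnecessary.  Once both $h_2$ and $h_3$ are monotone (from avoiding $V_2,V_3$) and $|h_2'|+|h_3'|\gtrsim a$ (from avoiding $Y_{=b}$), you get $\int_I(|h_2'|+|h_3'|)\le 8\rho$ directly, hence $|I|\lesssim \rho a^{-1}$; there is no need to pin down which derivative dominates.

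Second, and more substantively, your endpoint count has a slip.  A bad-segment boundary is a disk in a plane $x_1=c$, and $Y$ can meet such a plane in up to $D^2$ points; with $\lesssim D^3$ bad segments this gives only $\lesssim D^5$ endpoints, not $\lesssim D^3$.  The paper sidesteps this by \emph{not} counting components: any $Y_{\mathrm{comp}}$ with an endpoint on a bad-segment boundary has length $\lesssim \rho a^{-1}$ and hence sits within $O(1)$ tube segments of that bad segment, so all such components together are covered by $\lesssim D^3$ segments regardless of how many there are.  For the remaining components (both endpoints on $\partial(2T)$), the paper also avoids your B\'ezout-on-the-cylinder step (which would require a genericity check on the radius) by noting that each such component has $|h_2(s_1)-h_2(s_2)|\ge\rho$ or $|h_3(s_1)-h_3(s_2)|\ge\rho$, and then averaging over planes $x_2=h$ (resp.\ $x_3=h$) to bound their number by $\lesssim D^2$.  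Your direct cylinder argument works too, once you perturb the radius of $2T$ to a Sard-regular value.
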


We start by studying $Y_{=a}$ and proving a version of Lemma \ref{Z_=a} for cureves in $\RR^3$.

\begin{lemma} \label{Y_=a} Suppose that $Y = Z(Q_1, Q_2)$ is a transverse complete intersection in $\RR^3$ and that $Q_1$ and $Q_2$ have degree at most $D$.  Then $Y_{=a}$ is an algebraic variety of the form $Z(Q_1, Q_2, Q_a)$, where $Q_a$ is a polynomial (depending on $Q_1, Q_2$, and $a$) of degree $\lesssim D$.  Moreover, for almost every $a$, $Z(Q_1, Q_2, Q_a)$ is a transverse complete intersection.  In particular,$Y_{=a}$ consists of $\lesssim D^3$ points. 
\end{lemma}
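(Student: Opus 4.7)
The plan is to mimic the proof of Lemma \ref{Z_=a}, now with the tangent direction of the $1$-dimensional variety $Y$ playing the role of the normal direction in the hypersurface case. Because $Y=Z(Q_1,Q_2)$ is a transverse complete intersection, the vector field $\nabla Q_1\times\nabla Q_2$ is nowhere zero on $Y$ and spans $T_yY$ at each point. Hence the condition $\Angle(v(T),T_yY)=a$ is equivalent to $(v(T)\cdot(\nabla Q_1\times\nabla Q_2))^2=\cos^2(a)\,|\nabla Q_1\times\nabla Q_2|^2$. I would therefore define
\[ Q_a:=(v(T)\cdot(\nabla Q_1\times\nabla Q_2))^2-\cos^2(a)\,|\nabla Q_1\times\nabla Q_2|^2, \]
which is a polynomial on $\RR^3$. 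Since the entries of $\nabla Q_1\times\nabla Q_2$ have degree at most $2D-2$, both terms have degree at most $4D-4$, so $\Deg Q_a\lesssim D$, and by construction $Y_{=a}=Z(Q_1,Q_2,Q_a)$.

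For the transversality claim I would apply Sard's theorem exactly as in the proof of Lemma \ref{Z_=a}. Define
\[ f(y):=\frac{(v(T)\cdot(\nabla Q_1(y)\times\nabla Q_2(y)))^2}{|\nabla Q_1(y)\times\nabla Q_2(y)|^2}, \]
which is a $C^\infty$ function on $Y$ since the denominator is nonzero there. Sard's theorem applied to $f$ on the $1$-manifold $Y$ implies that for almost every $a$ the value $\cos^2(a)$ is a regular value of $f$, i.e., the intrinsic gradient $\nabla_Y f$ is nonzero at every point of $Y_{=a}$. The main computation is to convert $\nabla_Y f\neq 0$ into linear independence of $\nabla Q_1,\nabla Q_2,\nabla Q_a$. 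On the open set $U:=\{|\nabla Q_1\times\nabla Q_2|\neq 0\}\supset Y$ one has the identity $Q_a=|\nabla Q_1\times\nabla Q_2|^2(f-\cos^2 a)$. Differentiating at a point $y_0\in Y_{=a}$ and using $f(y_0)=\cos^2 a$, the first term in the product rule vanishes and we get $\nabla Q_a(y_0)=|\nabla Q_1(y_0)\times\nabla Q_2(y_0)|^2\,\nabla f(y_0)$. Restricting to $T_{y_0}Y$ shows that the tangential component of $\nabla Q_a(y_0)$ is nonzero iff $\nabla_Y f(y_0)\neq 0$; since $\nabla Q_1(y_0),\nabla Q_2(y_0)$ are linearly independent and span the orthogonal complement of $T_{y_0}Y$, this is exactly linear independence of the three gradients.

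With transversality of $Z(Q_1,Q_2,Q_a)$ established for almost every $a$, Bezout's theorem (Theorem \ref{bezout}) bounds $|Y_{=a}|$ by $\Deg Q_1\cdot\Deg Q_2\cdot\Deg Q_a\lesssim D\cdot D\cdot D=D^3$, completing the proof. The main subtle point is compatibility between the intrinsic gradient $\nabla_Y f$ on the curve $Y$ and the ambient gradient $\nabla Q_a$ on $\RR^3$: one has to observe that the identity $Q_a=|\nabla Q_1\times\nabla Q_2|^2(f-\cos^2a)$ holds on the open set $U$ (not merely on $Y$), so that the computation of $\nabla Q_a(y_0)$ as an ambient gradient via the product rule is legitimate. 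Once this is in place, the argument parallels that of Lemma \ref{Z_=a} line for line.
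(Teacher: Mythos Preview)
Your proof is correct and follows essentially the same approach as the paper's own proof: the same polynomial $Q_a$, the same auxiliary function $f$, Sard's theorem to obtain a regular value, and the product-rule computation of $\nabla Q_a$ at a point of $Y_{=a}$. If anything, your version is slightly more careful in noting that the identity $Q_a=|\nabla Q_1\times\nabla Q_2|^2(f-\cos^2 a)$ holds on the open set $U$ rather than merely on $Y$, which is exactly what is needed to differentiate it as an ambient function.
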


\begin{proof} If $y \in Y = Z(Q_1, Q_2)$, then the vector $\nabla Q_1(x) \times \nabla Q_2(x)$ spans $T_y Y$.  Therefore, we have $\Angle ( v(T), T_y Y) = a$ if and only if

$$ 0= \left( (\nabla Q_1 \times \nabla Q_2) \cdot v(T) \right)^2 - \cos^2 (a) | \nabla Q_1 \times \nabla Q_2|^2 =: Q_a. $$

This proves the first claim.  Now we argue as in the proof of Lemma \ref{Z_=a}.  We define
a function $f: Y \rightarrow \RR$ by

$$ f = \frac{\left( (\nabla Q_1 \times \nabla Q_2) \cdot v(T) \right)^2 }{ | \nabla Q_1 \times \nabla Q_2|^2 }, $$

\noindent so that $f(y) = \cos^2 (a)$ if and only if $y \in Y_{=a}$.  Fix $a$ and suppose that $y_0 \in Y_{=a}$.  We can write $Q_a$ as

$$ Q_a(y) =  | \nabla Q_1 \times \nabla Q_2|^2 \left( f(y) - \cos^2 (a) \right). $$

\noindent Since $f(y_0) - \cos^2 (a) = 0$, we see that $\nabla Q_1, \nabla Q_2, \nabla Q_a$ are linearly independent at $y_0$ if and only if $\nabla f(y_0) \not= 0$, where $\nabla f$ is considered as a vector field on $Y$.  By Sard's theorem, the critical values of $f$ have measure 0.  For almost every $a$, $\cos^2(a)$ is a regular value of $f$, and so $Z(Q_1, Q_2, Q_a)$ is a transverse complete intersection.

If $Z(Q_1, Q_2, Q_a)$ is a transverse complete intersection, then Bezout's theorem implies that it consist of $\lesssim D^3$ points. 
\end{proof}

Now we can begin the proof of Lemma \ref{geomtransboundcurve3d}.

\begin{proof} By Lemma \ref{Y_=a}, we can choose $b \in [(9/10) a, a]$ so that $Y_{=b}$ consists of $\lesssim D^3$ points.

Choose coordinates $x_1, x_2, x_3$ so that $T$ is given by the equation $(x_2, x_3) \in B^2(0, \rho)$.  In these coordinates $v(T) = \partial_1$.  Define $Y_{e_i^\perp}$ to be the set of points $y \in Y$ where $T_y Y \subset e_i^\perp$.  $Y_{e_i^\perp}$ is a variety: it is equal to $Z(Q_1, Q_2, (\nabla Q_1 \times \nabla Q_2) \cdot e_i )$.  After a small generic rotation of $T$ (and hence the coordinates), we can assume that it is a transverse complete intersection and so it consists of $\lesssim D^3$ points.

We divide $2T$ into tube segments corresponding to the original tube segments.  We label a tube segment bad if it lies within $10 \rho a^{-1}$ of a point of $Y_{=b}$ or $Y_{e_i^\perp}$.  The total number of bad segments is $\lesssim D^3$.

Suppose that $y \in Y_{\ge a} \cap T$ and that $y$ is not in any of the bad tube segments.  We consider the connected component of $Y \cap (2 T \setminus \textrm{ bad segments})$ that contains $y$ -- call this component $Y_{comp}$.  The curve $Y_{comp}$ contains no points of $Y_{e_1^\perp}$.  Therefore, it is defined as a graph $(x_2, x_3) = (h_2(x_1), h_3(x_1))$ for a smooth function $h = (h_2, h_3): I \rightarrow \RR^2$ on some interval $I$.  Since $Y_{comp}$ contains no points of $Y_{e_2^\perp}$ or $Y_{e_3^\perp}$, the sign of $\frac{dh_2}{dx_1}$ is constant and the sign of $\frac{dh_2}{dx_1}$ is constant.
Also, $Y_{comp}$ does not contain any points of $Y_{=b}$.  Since $y \in Y_{\ge a}$, we see that $Y_{comp} \subset Y_{> b}$.  Therefore, at every point of the interval $I$, 

\begin{equation} \label{curveangle}
\left| \frac{dh_2}{dx_1} \right| + \left| \frac{dh_3}{dx_1} \right| \ge (1/10) a.
\end{equation}

\noindent Therefore the length of $I$ is $\le 100 \rho a^{-1}$, and $Y_{comp}$ can be covered by $\lesssim 1$ tube segments.  

We have to prove that the set of such $Y_{comp}$ can be covered by $\lesssim D^3$ tube segments.  Some of the $Y_{comp}$ have a boundary point in the boundary of a bad tube segment.  The set of all such $Y_{comp}$ can be covered by $\lesssim D^3$ tube segments.

We consider components $Y_{comp}$ with no boundary point in a bad segment.  Recall that $Y_{comp}$ contains a point of $T$, and the boundary of $Y_{comp}$ must lie in $\partial (2T)$.  Let $I = (s_1, s_2)$.  Then either $| h_2(s_1) - h_2(s_2) | \ge \rho$ (type 2) or $|h_3(s_1) - h_3(s_2)| \ge \rho$ (type 3).

Each component of type 2 intersects many planes of the form $x_2 = h$.  For each type 2 component, the plane $x_2 = h$ intersects $Y_{comp}$ transversely for $h$ in a subinterval of $[- 2 \rho, 2 \rho]$ of measure at least $\rho$.  By Bezout's theorem, there are at most $D^2$ points where $Y$ intersects a plane transversely, and so the total number of type 2 components is at most $4 D^2$.  The number of type 3 components is also at most $4 D^2$.  
\end{proof}

Now we can begin the proof of the main result of this subsection, Lemma \ref{geomtransbound}.

\begin{proof} By Lemma \ref{Z_=a}, we can choose an angle $b \in [(9/10)a, a]$ so that $Z_{=b}$ is a transverse complete intersection of polynomials of degree $\lesssim D$.  

We remark that if $x \in Z_{=b}$, then $\Angle( v(T), T_x Z_{=b}) \ge b$.  We state this as a general observation.  Suppose that $Y \subset Z$ is a smooth curve and $x \in Y$.  Recall that the angle $\Angle (v(T), T_x Z)$ is defined to be $\min_{0 \not= w \in T_x Z} \Angle( v(T), w)$.  Since $T_x Y \subset T_x Z$, we get

\begin{equation} \label{anglecomparison}
\Angle (v(T), T_x Y) \ge \Angle( v(T), T_x Z).
\end{equation}

\noindent  In particular, if $x \in Z_{=b}$, we see that $\Angle( v(T), T_x Z_{=b}) \ge \Angle( v(T), T_x Z) = b$.  So if $Y = Z_{=b}$, then $Y_{\ge b}$ is all of $Y$.  

Now by Lemma \ref{geomtransboundcurve3d}, $Z_{=b} \cap 10T$ can be covered by $\lesssim D^3$ tube segments.  

Next we consider some other curves in $Z$.  For any non-zero vector $w$, we define
$ \\Tan_w \subset Z$ by

$$ \\Tan_w := \{ x \in Z | w \in T_x Z \} = Z(Q, \nabla Q \cdot w). $$

For almost every $w$, $\Tan_w = Z(Q, \nabla Q \cdot w)$ is a transverse complete intersection.  We let $W$ be a set of $O(1)$ unit vectors, including the coordinate vectors $e_1, e_2, e_3$, forming a $1/1000$-net on $S^2$.  We will say more about the choice of $W$ below.  After a tiny rotation of coordinates, we can assume that $\Tan_w$ is a transverse complete intersection for every $w \in W$.  By Lemma \ref{geomtransboundcurve3d}, the $|W|$ curves $(\Tan_w)_{\ge b} \cap 10T$ can be covered by $\lesssim D^3$ tube segments.

We divide $10T$ into tube segments corresponding to the original tube segments.  We label a tube segment bad if it lies within $100 \rho a^{-1}$ of a point of $Z_{=b}$ or $(\Tan_w)_{\ge b}$ for some $w \in W$.  The total number of bad segments is $\lesssim D^3$.

Suppose that $x \in Z_{\ge a} \cap T$ and that $x$ is not in any of the bad tube segments.  We consider the connected component of $Z \cap 2 T \cap B(x, 20 \rho a^{-1})$
that contains $x$ -- call this component $Z_{comp}$.  We know that $Z_{comp}$ contains no point of $Z_{=b}$, and so $Z_{comp} \subset Z_{> b}$.  

We also know that $Z_{comp}$ contains no point of $(\Tan_w)_{\ge b}$.  We claim that $Z_{comp}$ contains no point of $\Tan_w$.  Suppose that $x \in Z_{comp} \cap \Tan_w$.  Since $x \in Z_{comp}$, we have just seen that $\Angle(v(T), T_x Z) > b$.  But by equation \ref{anglecomparison}, we know that 

$$\Angle ( v(T), T_x (\Tan_w)) \ge \Angle (v(T), T_x Z) > b. $$  

\noindent Therefore, we would have $x \in (\Tan_w)_{\ge b}$.  So we conclude that $Z_{comp}$ contains no point of $\Tan_w$.

Since $W$ includes a $(1/1000)$-net of unit vectors, and $Z_{comp}$ does not intersect $\cup_{w \in W} \Tan_w$, it follows that the tangent plane $T_z Z$ is almost constant as $z$ varies in $Z_{comp}$: the tangent plane can only vary by an angle at most $1/100$.  

To finish the proof of Lemma \ref{geomtransbound}, we have to prove the following intersection estimate for $Z_{comp}$.  Consider lines parallel to the $x_1$-axis of the form $x_2 = h_2, x_3 =h_3$ with $(h_2, h_3) \in B^2(2 \rho)$.  We want to prove that for a subset of $B^2(2 \rho)$ with area $\ge \rho^2$, the corresponding line intersects $Z_{comp}$.  

Suppose for a moment that we have such an intersection estimate.  We claim that there are at most $4 \pi D$ points of $Z_{\ge a} \cap T$ that lie outside of the bad segments and are pairwise separated by $100 \rho a^{-1}$.  To prove the claim, suppose that we had more than $4 \pi D$ such points.  Consider the surface $Z_{comp}$ around each of the points -- because the points are separated, these surfaces are disjoint patches of $Z$.  By an averaging argument, we can find $(h_2, h_3) \in B^2(2 \rho)$ so that the line $x_2 = h_2$, $x_3 = h_3$ intersects more than $D$ of the surfaces $Z_{comp}$.  Also, the set of $(h_2, h_3)$ so that the line $x_2 = h_2$, $x_3 = h_3$ intersects $Z$ non-transversally has measure 0, so we can assume that our line intersects $Z$ transversally at more than $D$ points.  This gives a contradiction, proving our claim.

Given this claim, the portion of $Z_{\ge a} \cap T$ outside of the bad segments can be covered by $\lesssim D$ tube segments.  Since there are $\lesssim D^3$ bad tube segments, $Z_{\ge a} \cap T$ can be covered by $\lesssim D^3$ tube segments in total.  So it only remains to prove the intersection estimate.

Recall that the tangent plane of $Z_{comp}$ is nearly constant.  In the main case, $\Angle( v(T), T_z Z) \le (1/10)$ for all $z \in Z_{comp}$.  Let us first handle this case.  Because $Z_{comp}$ does not intersect $Tan_{e_3}$, at each point $z \in Z_{comp}$, the tangent plane $T_z Z$ can be given as a graph of the form $x_3 = L_z (x_1, x_2)$.  Because $Z_{comp} \subset Z_{\ge b}$, we know that $(9/10)a \le \Angle(v(T), T_z Z)$.  Being in the main case, we have also assumed that $\Angle( v(T), T_z Z) \le 1/10$.  Therefore, we get the following inequalities about $L_z$:

$$ a/2 \le |L_z(1,0)| \le 1/10.$$

We would also like to know something about $L_z (0,1)$.  The tangent plane $T_z Z$ is almost constant on $Z_{comp}$, so if there happens to be a single point $z_0 \in Z_{comp}$ where $|L_{z_0}(0,1)| \le 1/2$, then $|L_z(0,1)| \le 1$ for all $z \in Z_{comp}$.  We can arrange this by performing a rotation in the $x_2-x_3$ plane by an angle which is a multiple of $\pi/10$.  These rotations generate a finite group, so we can also assume that $W$ is invariant with respect to any of these rotations.  After the rotation, we still have $(9/10) a \le \Angle(v(T), T_z Z) \le (1/10)$ for all $z \in Z_{comp}$.  Therefore, without loss of generality we can arrange that for every $z \in Z_{comp}$, $L_z$ obeys the bounds

$$ a/2 \le |L_z(1,0)| \le 1/10; |L_z(0,1)| \le 1. $$

Recall that $Z_{comp}$ was defined around an original point $x \in Z_{\ge a} \cap T$.  We let $\pi$ be a plane through $x$, perpendicular to $v(T)$.  We can assume without loss of generality that the $x_1$ coordinate of the original point $x$ is zero, so that the plane $\pi$ is defined by $x_1 = 0$.  The intersection $\pi \cap T$ is a disk of radius $\rho$ centered at $x$, and $\pi \cap T \cap Z_{comp}$ is a smooth curve in this disk.  (Since $Tan_{e_2} \cap Z_{comp}$ is empty, $Z_{comp}$ is transverse to $\pi$.)  We look at the component of this curve containing the point $x$.  Because of the bound $|L_z(0,1)| \le 1$, this component can be given by a graph of the form $x_3 = g(x_2)$, for a function $g$ with $|\nabla g| \le 1$.  The function $g$ is defined on an interval containing $[-\rho/2, \rho/2] =: I_2$.  On $I_2$, we have $|g(x_2)| \le \rho/2$.  

For each $b_2 \in I_2$, consider the intersection of $Z_{comp}$ with the plane $x_2 = b_2$.  Since $Z_{comp}$ is disjoint from $Tan_{e_3}$, the intersection is a smooth curve.  Consider the connected component of this intersection which contains the point $(0, b_2, g(b_2))$.  Since $a/2 \le |L_z(1,0)| \le 1/10$,  this connected component is given by a graph of the form $x_2 = b_2$, $x_3 = j_{b_2}(x_1)$, where $|\nabla j| \ge a/2$.  By continuity the sign of $\frac{dj}{dx_1}$ must be constant.  We also know that $|j(0)| = |g(b_2)| \le \rho/2$, and $|b_2| \le \rho_2$.  The function $j$ is defined on an interval $I_1(b_2)$.  Let $e_1$ be the positive endpoint of $I_1(b_2)$.  Recalling the definition of $Z_{comp}$, we see that either $(b_2, j_{b_2}(e_1)) \in \partial B(2 \rho)$ or else $e_1 \ge 2 \rho a^{-1}$.  In either case, the image of $j_{b_2}$ must cover an interval $I_3(b_2)$ of length $\ge \rho$.  In the first case, we have $|j_{b_2}(e_1)| \ge (3/2) \rho$ and $|j_{b_2}(0)| \le (1/2) \rho$.  In the second case, since $| \nabla j| \ge a/2$ and $j$ is defined on $[0, 2 \rho a^{-1}]$, the image of $j$ must again cover an interval of length $\rho$.  

We have seen that $Z_{comp}$ intersects the line $x_2 =b_2$, $x_3 = b_3$ whenever $b_2 \in I_2$ and $b_3 \in I_3(b_2)$.  The total area of this region is $\ge \rho^2$.  This completes the proof of the intersection estimate in the main case that $\Angle(v(T), T_z Z) \le (1/10)$ for all $z \in Z_{comp}$.

Next we consider the minor case that $\Angle( v(T), T_z Z) \ge (1/20)$ for all $z \in Z_{comp}$.  In this case, $T_z Z'$ is a graph of the form $x_1 = \bar L_z (x_2, x_3)$ where $\bar L_z$ is a linear function obeying 

$$ |\bar L_z (x_2, x_3) | \le 40 | (x_2, x_3)|. $$

In this case, $Z_{comp}$ is a graph of the form $x_1 = h(x_2, x_3)$ over the disk $B^2(2 \rho)$ in the $x_2-x_3$ plane with $| \nabla h| \le 40$.  But in this case, $Z_{comp}$ intersects every line of the form $x_2 = b_2, x_3 = b_3$ with $(b_2, b_3) \in B^2(2 \rho)$.  This finishes the proof of the intersection estimate and hence the proof of Lemma \ref{geomtransbound}.

\end{proof}

\subsection{Directions of tangential tubes}

In this section, we prove Lemma \ref{tangbound}.  The main tool in the proof is a theorem of Wongkew \cite{W} on the volumes of neighborhoods of real algebraic varieties.  Here is a special case of the theorem.

\begin{theorem} \label{Wongkew} (Wongkew) If $P$ is a non-zero polynomial of degree $D$ on $\RR^n$, then

$$\Vol \left( B(L) \cap N_\rho Z(P) \right)  \le C_n D \rho L^{n-1}. $$
\end{theorem}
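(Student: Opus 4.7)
The plan is to prove Wongkew's theorem by combining two ingredients: an $(n{-}1)$-dimensional surface area bound for $Z(P) \cap B(L)$, and a Steiner-type estimate that converts surface area into volume of a $\rho$-neighborhood. The heuristic is that a degree-$D$ hypersurface ``looks like'' a union of $D$ hyperplanes, whose total surface area inside $B(L)$ is $\sim D L^{n-1}$ and whose $\rho$-neighborhood has volume $\sim D \rho L^{n-1}$.

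First I would reduce to the case in which $P$ is square-free: if $P = P_1^{a_1} \cdots P_k^{a_k}$ is the factorization into distinct irreducibles, then $P_1 \cdots P_k$ has the same zero set and degree at most $D$, so we may replace $P$ by its square-free part. After this reduction, $Z(P)$ has pure dimension $n-1$ and its singular locus $Z(P, \partial_1 P, \ldots, \partial_n P)$ has dimension $\le n-2$.

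Next I would establish the surface area estimate $\mathcal{H}^{n-1}(Z(P) \cap B(L)) \le C_n D L^{n-1}$ via the Cauchy--Crofton formula:
\begin{equation*}
\mathcal{H}^{n-1}(Z(P) \cap B(L)) \; = \; c_n \int_{\text{affine lines } \ell} \#(\ell \cap Z(P) \cap B(L)) \, d\mu(\ell),
\end{equation*}
where $\mu$ is the standard kinematic measure on affine lines. For all but a $\mu$-null set of lines, $\ell \not\subset Z(P)$, so $P|_\ell$ is a nonzero single-variable polynomial of degree $\le D$, and thus $\#(\ell \cap Z(P)) \le D$. Since the kinematic measure of lines meeting $B(L)$ is of order $L^{n-1}$, this yields the desired bound on $\mathcal{H}^{n-1}(Z(P) \cap B(L))$.

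Finally I would convert surface area into $\rho$-neighborhood volume. At smooth points of $Z(P)$ with $\rho$ less than the local reach, the tubular neighborhood formula gives a clean bound of the form $\Vol(N_\rho Z(P)_{\mathrm{smooth}} \cap B(L)) \le C_n \rho \cdot \mathcal{H}^{n-1}(Z(P) \cap B(L+\rho))$. To handle arbitrary $\rho$ uniformly I would use a covering argument: choose a maximal $\rho$-separated subset of $Z(P) \cap B(L+\rho)$, whose cardinality is $\lesssim \mathcal{H}^{n-1}(Z(P) \cap B(L+\rho))/\rho^{n-1}$ by a standard packing estimate for the rectifiable hypersurface $Z(P)$, and observe that the corresponding $2\rho$-balls cover $N_\rho Z(P) \cap B(L)$ and have total volume at most $C_n D \rho L^{n-1}$. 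The main technical obstacle is the singular locus of $Z(P)$: the packing estimate is cleanest on the smooth stratum, and one has to verify that the singular set contributes negligibly. This is handled by applying a codimension-2 version of the theorem (proved by the same strategy, inducting on codimension down to the case of $\lesssim D^n$ isolated points handled by Bezout), which gives $\Vol(N_\rho Z(P)_{\mathrm{sing}} \cap B(L)) \lesssim_n D^{O(1)} \rho^2 L^{n-2}$; this is absorbed into the main term in the relevant regime $\rho \lesssim L$ (and the complementary regime $\rho \gtrsim L$ is trivial since $\Vol(B(L)) \le C_n L^n \le C_n D \rho L^{n-1}$).
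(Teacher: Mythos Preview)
Your approach via Cauchy--Crofton plus a covering argument is genuinely different from the paper's route. The paper does not prove Wongkew's theorem directly (it cites \cite{W}), but it does prove the closely related grid version, Theorem~\ref{Wongkew'}, by a short induction on the dimension $n$: a unit cube meeting $Z(P)$ either contains an entire connected component of $Z(P)$ (there are $\le C_n D^n$ of these by Milnor--Thom) or has an $(n-1)$--face meeting $Z(P)$, and those faces are counted by applying the $(n-1)$--dimensional statement to the hyperplane slices $x_j = h_j$. Rescaling so that $\rho = 1$ then gives Wongkew's volume bound. This argument never needs to separate the smooth and singular strata of $Z(P)$, and it never needs an Ahlfors lower-density bound; that is exactly what makes it robust.

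Your proof, by contrast, has a real gap at the packing step. The inequality ``a maximal $\rho$-separated set in $Z(P)\cap B(L+\rho)$ has cardinality $\lesssim \mathcal H^{n-1}(Z(P)\cap B(L+\rho))/\rho^{n-1}$'' is \emph{not} a standard fact about rectifiable sets; it requires a uniform lower bound $\mathcal H^{n-1}(Z(P)\cap B(x,\rho/2)) \gtrsim_n \rho^{n-1}$ for every $x \in Z(P)$. For real algebraic hypersurfaces this lower density does hold, but it is itself a nontrivial theorem (comparable in depth to Wongkew's result), and over $\RR$ one must also worry that $Z(P)$ can have components of dimension $< n-1$ even when $P$ is square-free (e.g.\ $P = x_1^2 + x_2^2$ in $\RR^3$). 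Your proposed workaround --- bounding the contribution of the singular locus by $D^{O(1)} \rho^2 L^{n-2}$ and absorbing it --- does not work as stated: the inequality $D^{O(1)}\rho^2 L^{n-2} \le C_n D \rho L^{n-1}$ requires $D^{O(1)-1}\rho \lesssim L$, which fails for instance when $\rho \sim L$ and $D$ is large. (That regime is not vacuous: the theorem is nontrivial whenever $D\rho \lesssim L$.) So either you need to independently prove the uniform lower density of real algebraic hypersurfaces, or you should switch to the inductive cube-counting argument, which sidesteps the issue entirely.
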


Remark. Recently, Zhang gave an application of Wongkew's theorem in incidence geometry \cite{Z}.  

We need a minor generalization where the ball $B(L)$ is replaced by a rectangular region.

\begin{theorem} \label{Wongkew'} Suppose that $R$ is an $n$-dimensional rectangular grid of unit cubes with dimensions $R_1 \times ... \times R_n$, where $1 \le R_1 \le ... \le R_n$.  Suppose that $P$ is a non-zero polynomial of degree $D$.  Then the number of cubes of the grid that intersect $Z(P)$ is at most $C_n D \prod_{j=2}^n R_j$.  
\end{theorem}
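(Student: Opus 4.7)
The plan is to reduce Theorem \ref{Wongkew'} to Wongkew's original theorem (Theorem \ref{Wongkew}) by covering the rectangular region by balls of radius $\sqrt{n} R_1$, where $R_1$ is the shortest side. The key observation is that tiling cubes gives a clean passage from ``number of cubes meeting $Z(P)$'' to ``volume of a neighborhood of $Z(P)$.''

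First I would show that counting cubes is controlled by a volume. If a unit cube $C$ of the grid meets $Z(P)$, then every point of $C$ lies within distance $\sqrt{n}$ of $Z(P)$, so $C \subset N_{\sqrt{n}} Z(P)$. Since the unit cubes of the grid are disjoint and each has volume $1$, the number of cubes of the grid that intersect $Z(P)$ is at most
$$ \Vol\bigl( R \cap N_{\sqrt{n}} Z(P) \bigr). $$
So it suffices to bound this volume by $C_n D \prod_{j=2}^n R_j$.

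Next I would cover $R$ by balls of radius $\sqrt{n} R_1$. Since $R_1 \le R_j$ for every $j \ge 2$, we can do this with at most
$$ C_n \prod_{j=2}^n \lceil R_j / R_1 \rceil \le C_n' R_1^{-(n-1)} \prod_{j=2}^n R_j $$
balls. For each such ball $B$ of radius $L := \sqrt{n} R_1$, Theorem \ref{Wongkew} (with $\rho = \sqrt{n}$) gives
$$ \Vol\bigl( B \cap N_{\sqrt{n}} Z(P) \bigr) \le C_n D \sqrt{n} \, (\sqrt{n} R_1)^{n-1} \le C_n'' D R_1^{n-1}. $$
Summing over the cover,
$$ \Vol\bigl( R \cap N_{\sqrt{n}} Z(P) \bigr) \le C_n'' D R_1^{n-1} \cdot C_n' R_1^{-(n-1)} \prod_{j=2}^n R_j = C_n''' D \prod_{j=2}^n R_j, $$
which is the required estimate.

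There is really no hard step here: the whole point is that $R_1$ is the \emph{shortest} side, which lets the ball cover fit inside the rectangular box efficiently and makes the factors $R_1^{n-1}$ and $R_1^{-(n-1)}$ cancel exactly. If one tried the naive approach of applying Wongkew's theorem once to a bounding ball of radius $\sim R_n$ for the whole box, the exponent on $R_n$ would be too large. So the only thing to be careful about is to cover by balls whose radius is the \emph{smallest} side $R_1$, rather than the largest; after that the arithmetic closes with no slack.
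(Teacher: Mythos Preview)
Your proof is correct, but it follows a genuinely different route from the paper's. The paper proves Theorem \ref{Wongkew'} by induction on the dimension $n$: the base case $n=1$ is the fact that a degree $D$ polynomial has at most $D$ roots; for the inductive step, the Oleinik--Petrovskii--Milnor--Thom bound on the number of connected components of $Z(P)$ handles cubes that \emph{contain} a whole component, and the remaining cubes must have $Z(P)$ crossing a boundary face, which is counted by slicing the box with axis-parallel hyperplanes and applying the $(n-1)$-dimensional result to each slice. Your argument instead reduces the rectangular statement to Wongkew's ball theorem (Theorem \ref{Wongkew}) as a black box: pass from a cube count to the volume of $N_{\sqrt{n}} Z(P)$ inside the box, cover the box efficiently by balls of radius $\sim R_1$, and apply Theorem \ref{Wongkew} to each ball. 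Your reduction is shorter and purely geometric once Theorem \ref{Wongkew} is in hand; the paper's inductive proof is more self-contained (it does not invoke Theorem \ref{Wongkew} at all, only the Milnor--Thom bound) and in effect re-derives Wongkew's estimate in the rectangular setting.
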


The proof of this theorem is a minor modification of Wongkew's proof.  

\begin{proof} The proof is by induction on $n$.  When $n=1$, the theorem reduces to the fact that a degree $D$ polynomial in one variable has at most $D$ zeroes.

By a theorem of Oleinik-Petrovskii, Milnor, and Thom \cite{M}, the number of connected components of $Z(P)$ is $\le C_n D^n$.  Therefore, the number of cubes that contain a connected component of $Z(P)$ is $\lesssim D^n$.   If a cube intersects $Z(P)$ and does not contain a component of $Z(P)$, then one of its boundary faces intersects $Z(P)$.  We will count cubes of this type by induction on the dimension.

Now we want to count $(n-1)$-faces of the grid that intersect $Z(P)$.  
Consider all the $(n-1)$-dimensional rectangular grids formed from $R$ by fixing one of the coordinates to an integer value.  For each $j$, there are $R_j + 1$ such rectangular grids formed by intersecting $R$ with planes of the form $x_j = h_j$, $h_j = 0, ..., R_j$. The polynomial $P$ may vanish on at most $D$ of these $(n-1)$-planes, contributing at most $D \prod_{j=2}^n R_j$ $(n-1)$-dimensional faces.  If $P$ does not vanish on one of these $(n-1)$-dimensional rectangular grids, then we can use induction to bound the number of $(n-1)$-faces of this $(n-1)$-dimensional grid that intersect $Z(P)$.

For $j \not=1$, there are $\lesssim R_j$ rectangular grids in the $e_j^\perp$ direction.  In each of these grids, $Z(P)$ may intersect at most $C_{n-1} D R_j^{-1} \prod_{j'=2}^n R_{j'}$ $(n-1)$-faces.  Altogether, this contributes $\lesssim_n D \prod_{j=2}^n R_j$ $(n-1)$-faces.

For $j=1$, the bound is even better.  There are $\lesssim R_1$ rectangular grids in the $e_1^\perp$ direction.  In each of these grids, $Z(P)$ may intersect at most $C_{n-1} D \prod_{j'=3}^n R_{j'}$ $(n-1)$-faces.  So the total number of $(n-1)$-faces of this orientation is $\lesssim_n D R_1 R_3 R_4 ... R_n \le D \prod_{j=2}^n R_j$.  

\end{proof}

Now we set up Lemma \ref{tangbound} in a slightly more general way.

Suppose that $B= B^3(L)$ is a 3-dimensional ball of radius $L$.  Let $P$ be a product of non-singular polynomials of degree at most $D$, and let $Z = Z(P)$.  Let $\TT$ be a set of cylindrical tubes $T$ of thickness $\rho$.  We say that $T \in \TT$ lies in $\TT_{tang}$ if $2 T \cap Z \cap (1.1) B \not= \phi$ and for each non-singular point $x \in 10 T \cap Z \cap 2 B$,

$$ \Angle( v(T), T_x Z) \le \rho / L . $$

We say that two tubes $T_1, T_2 \in \TT$ point in different directions if the angle between $v(T_1)$ and $v(T_2)$ is at least $\rho / L$.  

\begin{lemma} \label{geomtangbound} If $\TT' \subset \TT_{tang}$ are tubes pointing in pairwise different directions, then 

$$ | \TT' | \lesssim D^2 \log^2 (L/ \rho) L / \rho. $$

\end{lemma}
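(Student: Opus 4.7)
The plan is to prove Lemma \ref{geomtangbound} via a double-counting argument relating the combined volumes of the tangent tubes inside $N_{C\rho}(Z)$ to the Wongkew-type bound on $|N_{C\rho}(Z) \cap 2B|$. The main geometric ingredients will be a lower bound on how much of each tube $T \in \TT_{tang}$ lies near $Z$, and an upper bound on the pointwise multiplicity of the family $\TT'$.

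First I would establish a length bound: for every $T \in \TT_{tang}$, a substantial fraction of $T \cap 2B$ lies in the $C\rho$-neighborhood of $Z$. The starting point is the nonempty intersection $2T \cap Z \cap 1.1 B$, together with the crucial fact that the tangency condition $\Angle(v(T), T_z Z) \le \rho/L$ holds at \emph{every} non-singular $z \in 10T \cap Z \cap 2B$, not just at one point. This is much stronger than single-point tangency: each time the tube re-approaches $Z$ inside $10T$, it is again forced to be nearly tangent, which prevents transverse crossings. Iterating across dyadic scales from $\rho$ up to $L$ should then yield $|T \cap N_{C\rho}(Z) \cap 2B| \gtrsim \rho^2 L / \log(L/\rho)$, the logarithmic loss coming from the dyadic pigeonhole.

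For the volume input, Wongkew's Theorem \ref{Wongkew'} applied to a grid of $C\rho$-cubes tiling $2B$ gives $|N_{C\rho}(Z) \cap 2B| \lesssim D\rho L^2$. For the multiplicity bound, I would argue that any point $p \in N_{C\rho}(Z) \cap 2B$ lies in at most $M \lesssim D\log(L/\rho)$ tubes of $\TT'$: each such tube has direction $v(T)$ within $\rho/L$ of a tangent plane $T_z Z$ for some $z \in Z$ not far from $p$, and the pairwise $\rho/L$-separation of directions combined with a Bezout-type bound on the critical curves $\Tan_v(Z) = Z(P, \nabla P \cdot v)$ of degree $\lesssim D^2$ (exactly as in Lemmas \ref{Y_=a} and \ref{geomtransboundcurve3d}) produces the desired count. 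Plugging into the double count
\[
|\TT'| \cdot \frac{\rho^2 L}{\log(L/\rho)} \;\le\; \sum_{T \in \TT'} |T \cap N_{C\rho}(Z) \cap 2B| \;\le\; M \cdot |N_{C\rho}(Z) \cap 2B| \;\lesssim\; D\log(L/\rho) \cdot D\rho L^2
\]
rearranges to the desired bound $|\TT'| \lesssim D^2 \log^2(L/\rho) \cdot L/\rho$.

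The main obstacle will be the multiplicity bound. A naive estimate using only that tangent directions at a single $Z$-point form a $1$-parameter family yields multiplicity $L/\rho$ and therefore only $|\TT'| \lesssim D(L/\rho)^2 \log$, which is too weak in the application, where $L/\rho$ is polynomially larger than $D$. Extracting the correct extra factor of $D$ genuinely exploits the polynomial structure of $Z$ through the degree-$D^2$ critical curves $\Tan_v(Z)$ and their intersection with small neighborhoods of points in $N_{C\rho}(Z)$. The length bound in the first step is also delicate, because a degree-$D$ surface can oscillate at many scales; it is precisely the \emph{global} tangency hypothesis (at \emph{every} non-singular $Z$-point in $10T \cap 2B$) that prevents the tube from repeatedly leaving and re-entering $N_{C\rho}(Z)$ and allows the dyadic argument to close.
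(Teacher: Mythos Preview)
Your approach has a genuine gap in the multiplicity bound. The claim that any point $p \in N_{C\rho}(Z) \cap 2B$ lies in at most $M \lesssim D\log(L/\rho)$ tubes of $\TT'$ is false. Take $Z$ to be a plane (so $D=1$) and $p$ a point on the plane: every tube through $p$ whose direction lies in the plane satisfies the tangency condition, and there are $\sim L/\rho$ pairwise $(\rho/L)$-separated such directions. So the pointwise multiplicity can be as large as $L/\rho$, and with $M \sim L/\rho$ your double count only yields $|\TT'| \lesssim (L/\rho)^2 \log(L/\rho)$, which is far too weak. The argument you sketch for improving the multiplicity via the curves $\Tan_v(Z)$ does not help here: for a plane those curves are all of $Z$ and carry no extra constraint. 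The student even anticipates this (``A naive estimate \ldots\ yields multiplicity $L/\rho$''), but the proposed fix does not survive the planar example.

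The paper does not attempt a pointwise multiplicity bound at all. Instead, after showing (with no logarithmic loss) that each $T \in \TT_{tang}$ satisfies $T \cap (3/2)B \subset N_{10\rho}(Z)$, it runs Wolff's \emph{hairbrush} argument. One counts triples $(Q,T_1,T_2)$ with $Q$ a $\rho$-cube in $N_{10\rho}(Z)$ and $T_1,T_2 \in \TT'$ passing through $Q$; Cauchy--Schwarz plus Wongkew gives a lower bound on the number of triples. Pigeonholing on the dyadic angle $\Angle(v(T_1),v(T_2)) \sim \theta$ and then on $T_1$ produces a single tube $T_1$ with many companions $T_2$ at angle $\sim \theta$. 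The union $H$ of these $T_2$'s (the hairbrush) lies in a $\theta L \times \theta L \times L$ slab, so Theorem~\ref{Wongkew'} bounds $\Vol H \lesssim D\theta L^2$ from above, while the standard hairbrush near-disjointness (organizing the $T_2$'s into $\sim \theta L$ planar slabs around the core of $T_1$) bounds $\Vol H$ from below. Comparing the two gives $|\TT'| \lesssim D^2 \log^2(L/\rho)\, L/\rho$. The hairbrush step is precisely what replaces your failed multiplicity bound: it exploits the direction separation \emph{globally}, not pointwise.

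As a minor remark, your length bound is also weaker than necessary. The tangency condition at \emph{every} non-singular $z \in 10T \cap 2B \cap Z$ lets one draw a curve in $Z$ starting from any $z_0 \in 2T \cap 1.1B \cap Z$ that stays inside $10T$ across the entire ball; hence $T \cap (3/2)B \subset N_{C\rho}(Z)$ outright, with no dyadic iteration and no $\log$ loss.
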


(To recover Lemma \ref{tangbound}, we take, $B = B_j$, $L = R^{1 - \delta}$, and $\rho = R^{(1/2) + \delta}$.  Therefore, if $\TT' \subset \TT_{j, tang}$ consists of tubes that point in $R^{-(1/2) + 2 \delta}$-separated directions, then Lemma \ref{geomtangbound} guarantees that $|\TT'| \lesssim R^{(1/2) + O(\delta)}$.  Next if we let $\TT''$ be a subset of $\TT_{j, tang}$ consisting of tubes that point in $R^{-1/2}$-separated directions, then the bound for $|\TT''|$ is at most $R^{O(\delta)}$ times larger than the bound for $|\TT'|$.  In conclusion, the number of different directions of tubes in $\TT_{j, tang}$ is $\lesssim R^{(1/2) + O(\delta)}$. )

\begin{proof}

By scaling we can assume that $\rho = 1$.  

If $T \in \TT_{tang}$, then we claim that $T \cap (3/2) B$ is contained in the $10$-neighborhood of $Z(P)$.  By assumption, there is a non-singular point $z_0 \in 2T \cap (1.1) B \cap Z(P)$.  Since $P$ is a product of non-singular varieties, $Z(P)$ is a union of smooth varieties $Z(P_l)$, and $z_0$ is in exactly one of them -- say $z_0 \in Z(P_l)$.  

We draw a curve in $Z(P_l)$ starting at $z_0$ and trying to stay as close as possible to the core line of $T$.  We choose coordinates $x_1, x_2, x_3$ where $T$ is given by $x_2^2 + x_3^2 \le 1$ and where the center of $B$ has $x_1$-coordinate 0.  Now at each point $z$ of $Z(P_l) \cap 10 T \cap 2 B$, $\Angle (v (T), T_z Z(P_l)) \le \rho / L = 1 / L$.  Therefore, we can parametrize a curve in $Z(P_l)$ starting at $z_0$, given by a graph $(x_2, x_3) = g(x_1)$ with $|\nabla g| \le 1 / L$, for $|x_1| \le (3/2)L$.  This curve lies in $Z(P)$, and $T \cap (3/2) B$ lies in the $10$-neighborhood of the curve.

The rest of the proof is a hairbrush argument, following Wolff's hairbrush idea from \cite{W2}.  Suppose that $|\TT'| = \beta L $.  We will prove that 
$\beta \lesssim  D^2 \log^2 L$.   For each tube $T$, $\Vol (B \cap T) \sim L$.  We cover $N_{10} Z(P) \cap B$ with cubes $Q$ of side length $1$.  By Theorem \ref{Wongkew}, the number of cubes $Q$ is $\lesssim D L^2$.  
Each tube $T \in \TT'$ intersects $\gtrsim L$ cubes.  So on average, each cube intersects at least $\beta D^{-1}$ tubes.  

Consider triples $(Q, T_1, T_2)$ with $Q$ in our set of cubes and $T_1, T_2 \in \TT'$.  Assuming that $\beta$ is significantly larger than $D$, a Cauchy-Schwarz argument implies that the number of triples is at least $(\beta/D)^2 D L^2 = \beta^2 D^{-1} L^2$.

We group the triples in dyadic blocks according to the size of $\Angle ( v(T_1), v(T_2))$.  If $T_1 \not= T_2$, then this angle is between $1/L$ and $\pi / 2$, so we get $\sim \log L$ dyadic blocks.  We pick a popular dyadic block with angle range $[\theta, 2 \theta]$, where $1/L \le \theta \le 2$.  The number of triples with $\Angle (v(T_1), v(T_2)) \in [\theta, 2 \theta]$ is $\gtrsim \beta^2 D^{-1} L^2 ( \log L)^{-1}. $

There are $\beta L$ tubes in $\TT'$.  By the pigeonhole principle, one of these tubes $T_1$ must appear in $\gtrsim \beta D^{-1} L (\log L)^{-1}$ triples with $\Angle (v(T_1), v(T_2)) \in [\theta, 2 \theta]$.  
We let $H$ (for hairbrush) denote the union of all these tubes, intersected with the ball $(3/2) B$.

Given the angle condition $\Angle(v_1(T), v_2(T)) \sim \theta$, each pair $T_1, T_2$ can appear in $\lesssim \theta^{-1}$ triples, and so the number of tubes $T_2$ in the hairbrush obeys

$$(\# \textrm{ of tubes } T_2 \textrm{ in } H) \gtrsim \beta D^{-1} \theta L (\log L)^{-1}. $$

We will get a lower bound on the volume of $H$ from Wolff's hairbrush argument, and we will get an upper bound on the volume of $H$ from Wongkew's theorem.  Playing these bounds against each other, we will get the desired upper bound $\beta \lesssim D^2 \log^2 L$.  

The tubes in the hairbrush $H$ are morally disjoint.  We can divide the hairbrush into $\sim \theta L$ planar slabs of thickness 1.  Outside of the $(\theta/10)L$-neighborhood of the core line of $T_1$, any point lies in $\lesssim 1$ of the planar slabs.  Because of the angle condition, the tubes in each planar slab have angle separation $\gtrsim L^{-1}$.  By a standard argument, the volume of their union is at least $(\log L)^{-1}$ times the sum of their volumes.  (See for example Theorem 1.3 in Lecture Notes 6 of \cite{Tnotes}.)  Therefore, we see that

$$ \Vol H \gtrsim (\log L)^{-1} (\# \textrm{ of tubes } T_2 \textrm{ in } H) L \gtrsim \beta D^{-1} \theta L^2 (\log L)^{-2}. $$

On the other hand, the hairbrush $H$ is contained in a cylinder around the core line of $T_1$ with radius $\theta L$.  This cylinder is approximately a rectangle of dimensions $\theta L \times \theta L \times L$.  The hairbrush $H$ is contained in the $O(1)$-neighborhood of $Z(P)$ inside this rectangle.  Theorem \ref{Wongkew'} gives the following upper bound on the volume of $H$.  

$$ \Vol H \lesssim D \theta L^2. $$

Combining the last two inequalities, we see that $\beta \lesssim D^2 (\log L)^2$.  

\end{proof}


\begin{thebibliography}{5}

\vskip.125in

\bibitem[B]{B1} J. Bourgain, Besicovitch type maximal operators and applications to Fourier analysis. Geom. Funct. Anal. 1 (1991), no. 2, 147-187. 

\bibitem[BG]{BG}  J. Bourgain, L. Guth, Bounds on oscillatory integral operators based on multilinear estimates. Geom. Funct. Anal. 21 (2011), no. 6, 1239-1295. 

\bibitem[BCT]{BCT} J. Bennett, A. Carbery, and T. Tao, On the multilinear restriction and Kakeya conjectures. Acta Math. 196 (2006), no. 2, 261-302.

\bibitem[CKW]{bezout?}  X. Chen, N. Kayal, and A. Wigderson, Partial derivatives in arithmetic complexity and beyond. Found. Trends Theor. Comput. Sci. 6 (2010), no. 1-2, pages 1-138 (2011).

\bibitem[CEGSW]{CEGSW} K.L. Clarkson, H. Edelsbrunner, L. Guibas, M Sharir, and E. Welzl,
{\it Combinatorial Complexity bounds for arrangements of curves and spheres}, Discrete Comput. Geom.
(1990)  5,  99-160.

\bibitem[C]{C} A. C\'ordoba, Geometric Fourier analysis.  Ann. Inst. Fourier (Grenoble) 32 (1982), no. 3, vii, 215-226. 

\bibitem[D]{D} Z. Dvir, On the size of Kakeya sets in finite fields. J. Amer. Math. Soc. 22 (2009), no. 4, 1093-1097.

\bibitem[GP]{GP} V. Guillemin and A. Pollack, { \it Differential Topology}, AMS Chelsea Publishing, 1974, reprinted 2010.

\bibitem[G1]{Gu2} L. Guth, Degree reduction and graininess for Kakeya-type sets in $\mathbb{R}^3$,  arXiv:1402.0518.

\bibitem[G2]{Gu3} L. Guth, Distinct distance estimates and low degree polynomial partitioning,  arXiv:1404.2321.

\bibitem[GK]{GK} L. Guth and N. Katz, On the Erd{\H o}s distinct distance problem in the plane, arXiv:1011.4105.

\bibitem[KMS]{KMS}  H. Kaplan, J. Matou\v{s}ek, M. Sharir, Simple proofs of classical theorems in discrete geometry via the Guth-Katz polynomial partitioning technique. Discrete Comput. Geom. 48 (2012), no. 3, 499-517. 

\bibitem[Ma]{Ma} J. Matou\v{s}ek, { \it Using the Borsuk-Ulam theorem}, Springer, Universitext, 2nd printing 2008.

\bibitem[Mi]{M} J. Milnor,  On the Betti numbers of real varieties. Proc. Amer. Math. Soc. 15, 1964, 275-280.

\bibitem[SS]{SS} M. Sharir and N. Solomon, Incidences between points and lines in four dimensions, Proc. 30th ACM Symp. on Computational Geometry (2014), to appear. 

\bibitem[SoTa]{SolTao} J. Solymosi and T. Tao, { An incidence theorem in higher dimensions}, Discrete Comput. Geom. Discrete Comput. Geom. 48 (2012), no. 2, 255-280. 

\bibitem[St]{Ste} E. Stein, Some problems in harmonic analysis. Harmonic analysis in Euclidean spaces (Proc. Sympos. Pure Math., Williams Coll., Williamstown, Mass., 1978), Part 1, pp. 3-20, Proc. Sympos. Pure Math., XXXV, Part, Amer. Math. Soc., Providence, R.I., 1979.

\bibitem[StTu]{ST} A. Stone, and J. Tukey, Generalized "sandwich'' theorems.
Duke Math. J. 9, (1942) 356-359. 

\bibitem[SzTr]{SzTr} E. Szemer\'edi and W. T. Trotter Jr., { Extremal Problems in Discrete Geometry},
Combinatorica (1983)  3, 381-392.

\bibitem[TVV]{TVV}  T. Tao, A. Vargas, L. Vega, A bilinear approach to the restriction and Kakeya conjectures. J. Amer. Math. Soc. 11 (1998), no. 4, 967-1000.

\bibitem[T1]{T1} T. Tao, The Bochner-Riesz conjecture implies the restriction conjecture.
Duke Math. J. 96 (1999), no. 2, 363-375. 

\bibitem[T2]{T2} T. Tao,  A sharp bilinear restrictions estimate for paraboloids. Geom. Funct. Anal. 13 (2003), no. 6, 1359-1384. 

\bibitem[T3]{Tnotes} T. Tao, Lecture notes on restriction, Math 254B, Spring 1999.

\bibitem[Won]{W}  R. Wongkew, Volumes of tubular neighbourhoods of real algebraic varieties. Pacific J. Math. 159 (1993), no. 1, 177-184.

\bibitem[W1]{W1} T. Wolff, A sharp bilinear cone restriction estimate. Ann. of Math. (2) 153 (2001), no. 3, 661-698.

\bibitem[W2]{W2} T. Wolff, An improved bound for Kakeya type maximal functions. Rev. Mat. Iberoamericana 11 (1995), no. 3, 651-674.

\bibitem[W3]{W3} T. Wolff, Recent work connected with the Kakeya problem. Prospects in mathematics (Princeton, NJ, 1996), 129-162, Amer. Math. Soc., Providence, RI, 1999.

\bibitem[W4]{W4} T. Wolff, Local smoothing type estimates on $L^p$ for large p. Geom. Funct. Anal. 10 (2000), no. 5, 1237-1288. 

\bibitem[W5]{W5} T. Wolff, A Kakeya-type problem for circles. Amer. J. Math. 119 (1997), no. 5, 985-1026. 

\bibitem[Z]{Z} R. Zhang, Polynomials with dense zero sets and discrete models of the Kakeya conjecture and the Furstenberg set problem, arXiv:1403.1352.

\end{thebibliography}
\end{document}